\title[Triadic analysis of affiliation networks]
      {Triadic analysis of affiliation networks}
 \author[J.C. Brunson]
        {JASON CORY BRUNSON\\
         Center for Quantitative Medicine, UConn Health, Farmington, CT 06030\\
         \email{brunson@uchc.edu}}
\def\df{\itshape}
\def\em{\slshape}
\newtheorem{example}{Example}[section]
\newtheorem{definition}[example]{Definition}
\newtheorem{lemma}[example]{Lemma}
\newtheorem{theorem}[example]{Theorem}
\newtheorem{axiom}{Axiom}
\def\id{\mathrm{id}}
\def\Hom{\mathsf{Hom}}
\def\Par{\mathsf{Par}}
\def\Z{\mathbb{Z}}
\def\Tr{{\rm Tr}}
\def\C{\mathcal{C}}
\def\T{\mathcal{T}}
\def\opsahl{^\ast}
\def\excl{^\circ}
\def\setchoose#1#2{\big\{\!{#1\atopwithdelims..#2}\!\big\}}
\DeclareRobustCommand{\text}{%
  \ifmmode\expandafter\text@\else\expandafter\mbox\fi}
\let\nfss@text\text
\def\text@#1{{\mathchoice
  {\textdef@\displaystyle\f@size{#1}}%
  {\textdef@\textstyle\f@size{#1}}%
  {\textdef@\textstyle\sf@size{#1}}%
  {\textdef@\textstyle \ssf@size{#1}}%
  \check@mathfonts
  }%
}
\def\textdef@#1#2#3{\hbox{{%
                    \everymath{#1}%
                    \let\f@size#2\selectfont
                    #3}}}
\begin{document}

\label{firstpage}

\maketitle

\begin{abstract}
Triadic closure has been conceptualized and measured in a variety of ways, most famously the clustering coefficient. Existing extensions to affiliation networks, however, are sensitive to repeat group attendance, which manifests in bipartite models as biclique proliferation. Whereas this sensitivity does not reflect common interpretations of triadic closure in social networks, this paper proposes a measure of triadic closure in affiliation networks designed to control for it. To avoid arbitrariness, the paper introduces a triadic framework for affiliation networks, within which a range of measures can be defined; it then presents a set of basic axioms that suffice to narrow this range to the one measure. An instrumental assessment compares the proposed and two existing measures for reliability, validity, redundancy, and practicality. All three measures then take part in an investigation of three empirical social networks, which illustrates their differences.
\end{abstract}



\section{Introduction}\label{sec:introduction}

Triadic analysis, which emphasizes the interactions within subsets of three nodes, has long been central to network science. Meanwhile, affiliation (or co-occurrence) data have long been a major source of empirical networks. Most triadic analyses of affiliation networks either collapse their higher-order structure or focus on relations among triples of nodes, often of mixed type. This paper, building upon some recent contributions, focuses instead on triples of actors, together with the non-actor structure that establishes relations among them.

\subsection{Background}\label{sec:background}

\paragraph{Precursors}\label{sec:precursors}

Previous triadic approaches in the social networks literature provide examples of hypothesis formulation, measure design, and sociological interpretation that inspired the present analysis. One thread begins with a series of studies designed to test socio-structural predictions of cognitive balance theory \cite{d-clustering1}. These predictions apply at the level of triads, but could be analyzed statistically by aggregating over an entire graph. For example, the transitive property, under which the directed relations \(p\to q\to r\) imply the relation \(p\to r\), describes social graphs with a specific hierarchical structure \cite{hl-transitivity}. While this structure would be hard to measure directly, the transitivity ratio (the global proportion of instances of \(p\to q\to r\) in which \(p\to r\)) provides a simple measure of how closely a graph respects this property \cite{hk-matrix}.

A separate thread concerns the ``small world'' property, a high concentration of ties within communities yet counterintuitively low distances between actors in different communities, observed in empirical social networks \cite{sk-contacts}. The ``strong triadic closure'' (STC) hypothesis proposed to reconcile these properties by ascribing a cohesion role to strong ties within communities and a bridging role to weak ties between them \cite{g-strength}. STC distinguishes two levels of tie (strong and weak) and posits that strong ties lead to more closures. A reorientation from triads to ego networks led to the ``structural holes'' framework, in which an actor with many weak ties, hence a more disconnected neighborhood, has increased potential as a broker. The local measure of constraint was introduced to quantify how these neighborhood connections limit brokerage potential \cite{b-structural}. A later, independent study introduced the similar but simpler clustering coefficient to quantify ``cliquishness'' across a family of small world models \cite{ws-collective}.

\paragraph{Conventions}\label{sec:conventions}

The present study concerns social networks, but the concepts generalize to any affiliation network (AN) setting. Most terminology and notation is taken from standard references \cite{bm-graph,wf-social}. Additional concepts will be defined as needed.

A graph \(G=(V,E)\) consists of a finite set \(V\) of nodes and a set \(E\subseteq V\times V\) of edges \(e=(v,w)\). Edges will be symmetric and will not include duplicates or loops. A graph is bipartite if its nodes can be partitioned into subsets \(V_1\) and \(V_2\) in such a way that \(E\subseteq V_1\times V_2\). The degree of a node \(v\) is the number of edges containing \(v\). A subgraph of \(G\) is a graph \(G'=(V',E')\) satisfying \(V'\subseteq V\) and \(E'\subseteq E\), and a subset \(W\subseteq V\) of nodes induces the subgraph \((W,E\cap(W\times W))\).

Traditional social networks consist of actors having (here, symmetric) relations among them, and are modeled as graphs with actors represented by nodes and relations by edges. Three actors, together with the relations among them, form a triad. The triads of a traditional network \(G\) take four types \(i=0,1,2,3\), according to the number of relations among their actors; the tallies \(s_i=s_i(G)\) of each type constitute the {\df triad census} \((s_0,s_1,s_2,s_3)\). The {\df (classical) clustering coefficient}, often described as the proportion of connected triples that are closed \cite{n-structure2}, is then the ratio \(C(G)={3\times s_3}/(s_2+3\times s_3)\).

Relations among the actors of an AN are established through common attendance at events; each event is attended by some subset of actors. ANs are modeled as bipartite graphs, \(V_1\) consisting of the actors and \(V_2\) the events. Though actors are only tied to events, in both settings the neighbors of an actor \(v\) shall be the actors related to \(v\). If actors who coattended events are assigned edges, then they (without the events) form a traditional social network called the projection.

\paragraph{Organization}\label{sec:organization}

Sec.~\ref{sec:exclusive} proposes the new clustering coefficient. The main body of the paper is split between theoretical (Sec.~\ref{sec:theoretical}) and empirical (Sec.~\ref{sec:empirical}) assessments of this statistic, and begin with their own organizational outlines. In short, Sec.~\ref{sec:theoretical} explores triadic analysis in the abstract, while Sec.~\ref{sec:empirical} performs triadic analyses on empirical data. Sec.~\ref{sec:conclusion} summarizes the paper, its limitations, and future directions. 
All analyses are performed, and images produced, using the open-source statistical programming language R, with the {\tt igraph} and {\tt ggplot2} packages in particular \cite{r-R,cn-igraph,w-ggplot2}. Full code is available at \url{https://github.com/corybrunson/triadic}.

\subsection{The exclusive clustering coefficient}\label{sec:exclusive}

\paragraph{Motivation}\label{sec:motivation}

``Triadic closure'' (TC) refers to the tendency for the relations \((p,q)\) and \((q,r)\) to entail the relation \((p,r)\). This entailment need not be causal, nor even chronological, but interpersonal interpretations of TC posit that the common neighbor \(q\) facilitates, or even initiates, the connection between \(p\) and \(r\). Such interpretations, however, are at odds with common measures of TC, especially in the AN setting.

The clustering coefficient, for example, is often evaluated on projections; this shall be the meaning of the shorthand \(C(G)\) when \(G\) is an AN.\footnote{\(C\) evaluates to zero on any bipartite graph.} A conspicuous feature of these projections is the proliferation of clique graphs \(K_n\), which consist of \(n\) nodes and all \({n\choose 2}\) possible edges between them. \(n\) actors in \(G\) who attend any single event produce a copy of \(K_n\) in the projection, which contains \({n\choose 3}\) 3-edge triads. These can dramatically increase \(C(G)\), so that its values are often largely determined by event size \cite{n-scientific-i,gs-analyzing}. High event attendance, however, does not guarantee TC: Individuals in distinct, pre-existing social groups at a common event may interact primarily with others in their own groups, and forge few if any inter-group relations.

Attempts to account for this inflation of \(C\) have taken both ``conversion'' (at the projection level) and ``direct'' (at the AN level) approaches. Conversion approaches have, for example, standardized the value of \(C\) by its values at a suitable null model \cite{us-collaboration} and applied clustering coefficients designed for weighted networks to weighted projections \cite{skokk-generalizations}. These methods help discriminate levels of TC among ANs, but at some cost to interpretability.

Two recent direct approaches define new clustering coefficients in terms of AN structure among triples of actors \cite{o-triadic,lr-clustering}. The {\df Opsahl clustering coefficient} \(C\opsahl\), for example, restricts the notion of ``connected triples'' (of actors) to those who are pairwise connected through distinct events. It can be defined as the proportion of 4-paths that are closed: The graph \(P_d\) consisting of distinct nodes \(v_0,v_1,\ldots,v_d\) and edges \((v_i,v_{i+1})\) is called the \(d\)-path; if, instead, \(v_0=v_d\), the result is the \(d\)-cycle \(C_d\).\footnote{The \(d\)-paths involved in this calculation must begin and end at actor nodes.} (Both have \(d\) edges; see Fig.~\ref{fig:TC}c,d.) For a 4-path in \(G\) to be ``closed'' means for it to be contained in a 6-cycle.\footnote{Several other studies have proposed bipartite clustering coefficients that concern triples of nodes but not of actors, and are not considered here \cite{o-triadic}.} In an empirical test, \(C\opsahl\) took much smaller values than \(C\), and the two statistics diverged most on the network with the greatest mean event size \cite{o-triadic}.

\begin{figure}[h]
\centerline{
\includegraphics[width=.2\textwidth, trim = 2.5cm 2.75cm 1.25cm 2.25cm, clip = false]{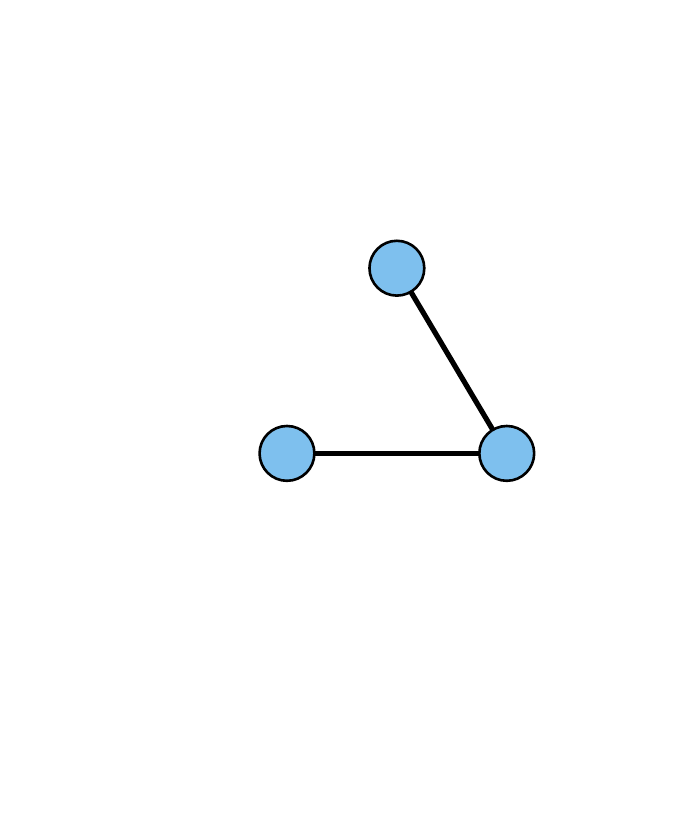}
\hspace{-2ex}a\hspace{1.5ex}\ \ 
\includegraphics[width=.2\textwidth, trim = 2.5cm 2.75cm 1.25cm 2.25cm, clip = false]{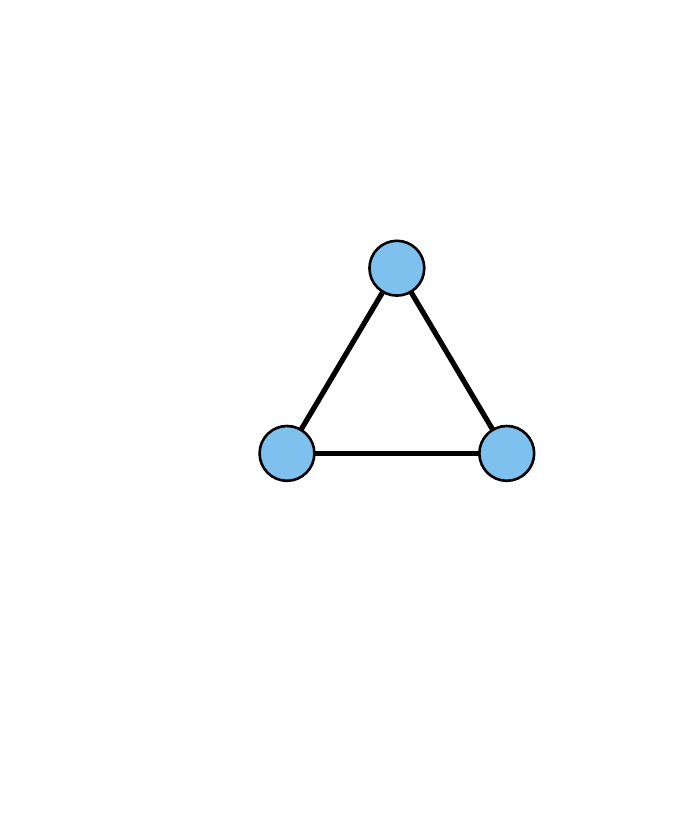}
\hspace{-2ex}b\hspace{1.5ex}\hfill
\includegraphics[width=.2\textwidth, trim = 2.5cm 2.75cm 1.25cm 2.25cm, clip = false]{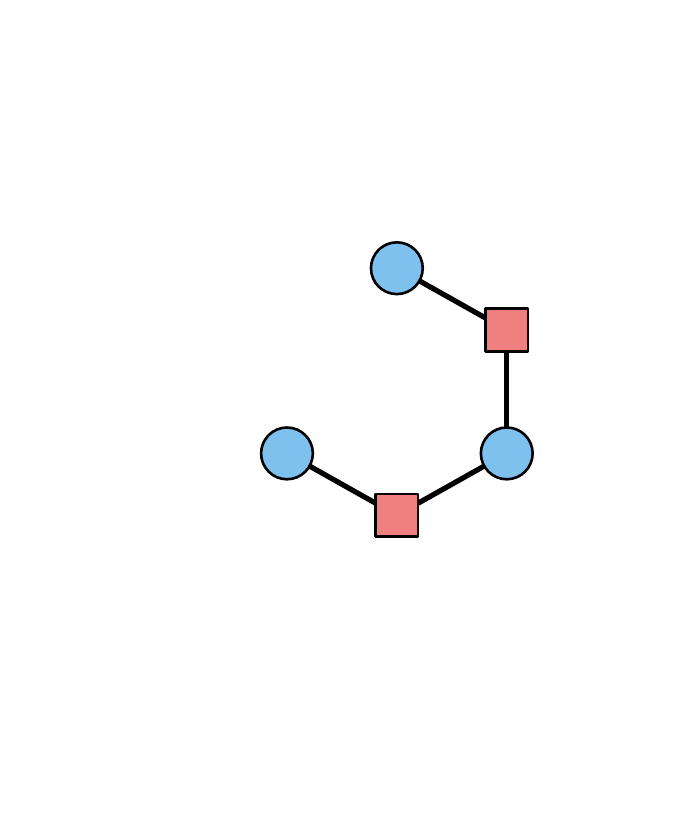}
\hspace{-2ex}c\hspace{1.5ex}\ \ 
\includegraphics[width=.2\textwidth, trim = 2.5cm 2.75cm 1.25cm 2.25cm, clip = false]{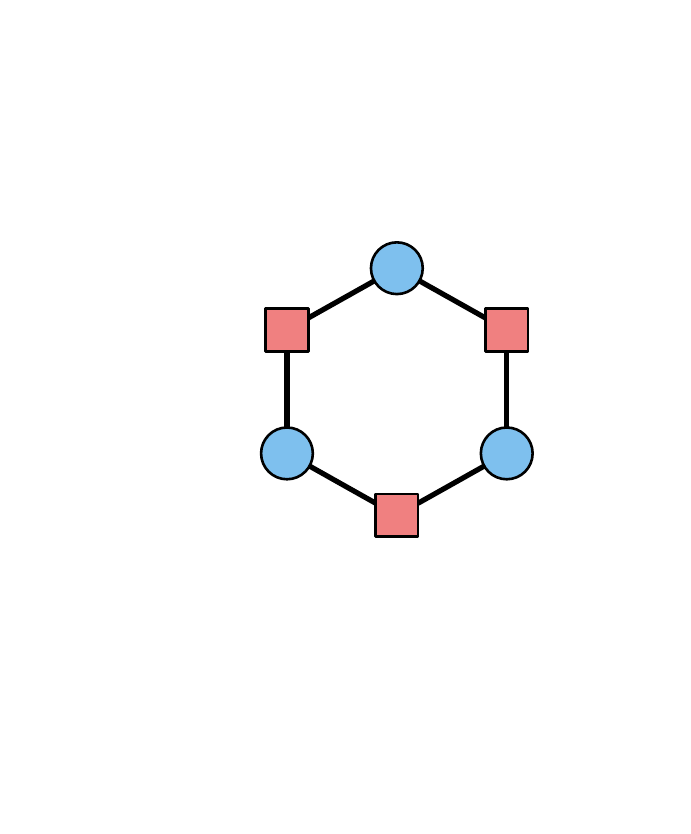}
\hspace{-2ex}d\hspace{1.5ex}
}
\caption{Traditional and affiliation network conceptions of triadic closure: In traditional networks, the 2-edge triad (a) is ``open'' while the 3-edge triad (b) is ``closed''. The clustering coefficient \(C\) is defined either as a ratio of the numbers of triads of these types (see Sec.~\ref{sec:background}) or as the proportion of subgraphs of the form (a) that are contained in a subgraph of the form (b). One extension of this idea to ANs uses the 4-path (c) and the 6-cycle (d) in place of these triads. The Opsahl clustering coefficient \(C\opsahl\) is defined as the proportion of subgraphs (c) that are contained in a subgraph (d). (Circular nodes denote actors; square nodes denote events.)}
\label{fig:TC}
\end{figure}

However, these measures may still be at odds with the popular interpretation of TC: The same pre-existing groups that attend one event are likely to attend others, though this no more entails TC than attendance at the first. Such repeat group attendance manifests in bipartite AN models as the proliferation of biclique graphs \(K_{n,m}\), which consist of \(n\) actors who each attend each of \(m\) events (hence \(n\times m\) edges). Indeed, bicliques and similar motifs have been observed in empirical ANs at frequencies greater than expected by chance \cite{be-network1,c-study}. Just as \(C\) is sensitive to cliques, \(C\opsahl\) is sensitive to bicliques: If \(m\geq 3\), then \(K_{3,m}\) contains \(6m(m-1)\) 4-paths, each of which is closed; the effect grows geometrically with \(n\).\footnote{Though note that \(K_{3,2}\) contains \(6\times 2=12\) {\em open} (and no closed) 4-paths.} Thus empirical values of \(C\opsahl\) may be dominated by patterns of repeat group attendance. The need for a measure of TC in ANs that also controls for this artifact motivated the present study.

\paragraph{Proposal}

The proposed graph statistic follows \(C\opsahl\) in restricting to pairwise connectivity through separate events within a triad. It also addresses two concerns raised by \(C\opsahl\): First, the 4-paths and 6-cycles in its calculation contain no intermediate edges---each event is attended by only two of the three actors.\footnote{This choice, and some alternatives, have received their own treatment \cite{lr-clustering}.} This eliminates the direct influence of repeat group attendance.

Another concern is how the population of actors (or of triads) should be weighted in the calculation. \(C\) weights all actors equally, in that any ordered triple of actors can have at most one 2-path through them in the projection. In contrast, many 4-paths may exist through a single ordered triple in an AN, due to a multiplicity of shared events, so that more prolific actors will tend to have more influence on the value of \(C\opsahl\). Because the present study takes an actor-centric approach, the proposed statistic is designed to weight actors equally.

The statistic is denoted \(C\excl\). It asks, {\em provided \(p\) and \(q\) attend some event without \(r\), and \(q\) and \(r\) attend some event without \(p\), what is the probability that \(p\) and \(r\) attend some event without \(q\)?} Since \(C\excl\) measures TC only through pairwise-exclusive events, it shall be called the {\df exclusive clustering coefficient}.

\begin{example}\label{ex:kite}
\begin{figure}[h]
\centerline{
\includegraphics[width=.167\textwidth, trim = 4.5cm 8.75cm 3.5cm 8cm, clip = true]{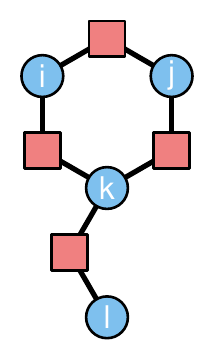}
\hspace{-3ex}a\hspace{1.5ex}\hfill
\includegraphics[width=.167\textwidth, trim = 4.5cm 8.75cm 3.5cm 8cm, clip = true]{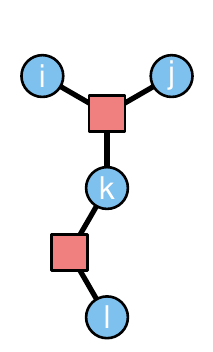}
\hspace{-3ex}b\hspace{1.5ex}\hfill
\includegraphics[width=.167\textwidth, trim = 4.5cm 8.75cm 3.5cm 8cm, clip = true]{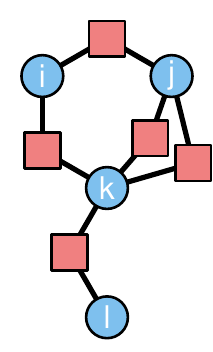}
\hspace{-3ex}c\hspace{1.5ex}\hfill
\includegraphics[width=.167\textwidth, trim = 4.5cm 8.75cm 3.5cm 8cm, clip = true]{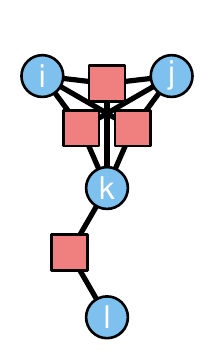}
\hspace{-3ex}d\hspace{1.5ex}\hfill
\includegraphics[width=.167\textwidth, trim = 4.5cm 8.75cm 3.5cm 8cm, clip = true]{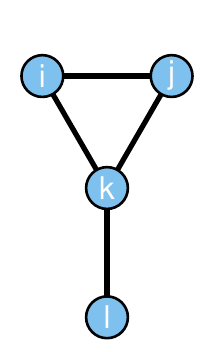}
\hspace{-3ex}e\hspace{1.5ex}
}
\caption{Four ANs (a--d) having the same projection (e).}
\label{fig:kite}
\end{figure}

Fig.~\ref{fig:kite} depicts four ANs that project to the same the ``kite'' graph. AN (a) exhibits TC in the sense of interest, while (b) exhibits TC of the kind \(C\opsahl\) was designed to ignore. \(C\opsahl\) evaluates to \(\frac{3}{5}\) at (a) and to \(0\) at (b), and \(C\excl\) agrees on both. \(C\opsahl\) evaluates to \(\frac{5}{8}\) at (c), and to \(\frac{3}{4}\) at (d), due to additional copies of \(P_4\) and \(C_6\). For instance, six copies of \(P_4\) in (d) proceed from \(i\) through \(j\) to \(k\), and each is closed. In contrast, \(C\excl\) takes the familiar values \(\frac{3}{5}\) at (c) and \(0\) at (d), since it is calculated on the same numbers of distinct 4-paths and 6-cycles.
\end{example}


By eliminating sources of 3-edge triads other than the popular meaning of TC, \(C\excl\) may help to infer dynamic information from static data. The popular meaning is dynamic: Actors who are not neighbors, but who have neighbors in common at one time, become neighbors at a later time \cite{ek-networks,mbkn-coauthorship}. If a traditional network \(G\) has edges labeled by instants in time, such that an edge labeled \(t\) is said to exist after \(t\) but not before, define the {\df dynamic triadic closure} \(D(G)\) to be, among those triads at which there is at some time an open 2-path, the proportion at which there is at a later time a 3-cycle. If \(G\) is an AN with events labeled by time, then \(D\) shall be calculated on its projection, where each edge is labeled by the earliest event that projects to it.

In the traditional setting, if a network has time-labeled edges, no two of which are simultaneous, then \(D=s_3/(s_2+s_3)=C/(3-2\times C)\). In the AN setting, pairwise-exclusive events are essential to \(D\), since an open 2-path in the projection must correspond to a triad with only pairwise-exclusive events. While the two calculations are in general unequal even when no two events are simultaneous, \(C\excl\) could provide a useful estimate of \(D\).

\section{Theoretical analyses}\label{sec:theoretical}

This section formalizes the exclusive clustering coefficient and evaluates its theoretical merits. Sec.~\ref{sec:classification} develops a formal notion of ``triad'' for ANs. On this foundation, Sec.~\ref{sec:category} unifies \(C\), \(C\opsahl\), and \(C\excl\) into a generic clustering coefficient. This definition specializes to impracticably many statistics, which Sec.~\ref{sec:axiomatic} whittles down by appeal to several properties suited to present purposes. The technical details of this process are relegated to Sec.~\ref{sec:proofs}.

\subsection{Triads}\label{sec:classification}

\paragraph{Scheduled subgraphs}

A triad-centric approach to ANs requires an object of study. What, then, is a ``triad''? Since the triads of a traditional network are the subgraphs {\em induced} by three actors, a suitable analog of induced subgraphs for ANs would suffice. This paper proposes to include those events that establish relations among a set of actors:

\begin{definition}\label{def:scheduled}
Given an AN \(G\) and a subset \(W\) of actors of \(G\), the subgraph of \(G\) {\df scheduled} by \(W\) is the subgraph induced by the actors \(W\) together with all events attended by at least two actors in \(W\).
{\df Scheduled} graph maps are defined analogously to induced graph maps, and the {\df triads} of \(G\) are the subgraphs scheduled by three actors.
\end{definition}

\begin{figure}[h]
\centerline{
\includegraphics[width=.3\textwidth, trim = 3cm 8cm 2cm 7.5cm, clip = true]{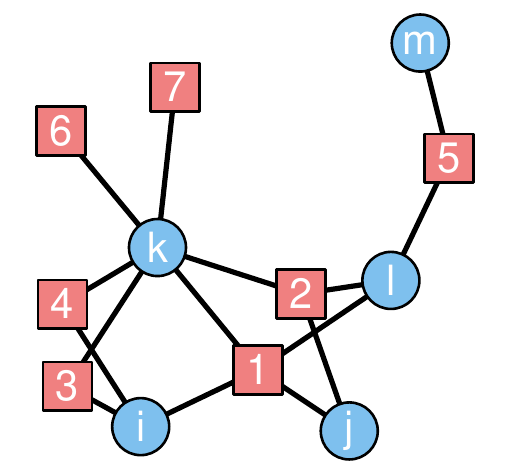}
\hspace{-3ex}a\hspace{1.5ex}\hfill
\includegraphics[width=.3\textwidth, trim = 3cm 8cm 2cm 7.5cm, clip = true]{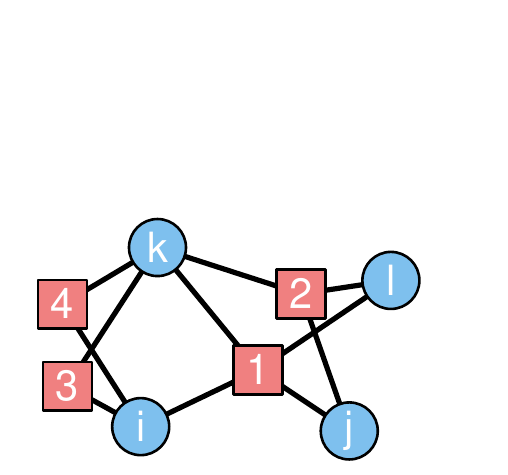}
\hspace{-3ex}b\hspace{1.5ex}\hfill
\includegraphics[width=.3\textwidth, trim = 3cm 8cm 2cm 7.5cm, clip = true]{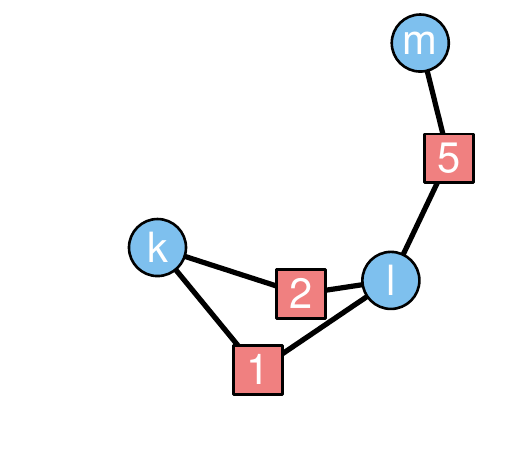}
\hspace{-3ex}c\hspace{1.5ex}
}
\caption{An AN (a) of actors \(i,j,k,l,m\) and its scheduled subgraphs (b) at \(\{i,j,k,l\}\) and (c) at \(\{k,l,m\}\).}
\label{fig:sched}
\end{figure}

\begin{example}\label{ex:sched}
Fig.~\ref{fig:sched} depicts an AN of five actors and two of its scheduled subgraphs. Note in particular that the scheduled subgraph on the entire set of actors (not shown) does not include events 6 and 7, since they play no role in establishing relations among the actors.\footnote{This shows that an AN need not be the scheduled subgraph of its actors, contrary to the analogous property of induced subgraphs. Their projections, however, are the same (up to edge weights).}
\end{example}

\paragraph{Triad censuses}

The classification of AN triads is straightforward but not trivial. While traditional triads fall into four isomorphism classes (see Sec.~\ref{sec:category}), AN triads, in theory, occupy arbitrarily many, due to the unlimited number of events two or three actors might attend. Consider an arbitrary triad with actors \(p,q,r\). Take \(w_{pq}\) to be the number of events attended by \(p\) and \(q\), similarly define \(w_{qr}\) and \(w_{pr}\), and take \(w_{pqr}\) to be the number of events attended by all three. (Note that \(w_{pq}\) does not depend on \(r\), etc.) Up to isomorphism, it may be assumed that \(w_{pq}\geq w_{qr}\geq w_{pr}\) (otherwise relabel the actors). Necessarily, \(w_{pqr}\leq w_{pq}\). Let \(\mu=(\mu_1,\mu_2,\mu_3)=(w_{pq}-w_{pqr},w_{qr}-w_{pqr},w_{pr}-w_{pqr})\) count the ``exclusive'' events between each pair of actors, and let \(w=w_{pqr}\) count the ``inclusive'' events attended by all three. The pair \((\mu,w)\) determines the isomorphism class of the triad.\footnote{While this scheme is more intuitive, a more storage-friendly enumeration of the triad classes is given by the quadruple \((w_{pqr}-w_{pq},w_{pq}-w_{qr},w_{qr}-w_{pr},w_{pr})\in(\Z_{\geq 0})^4\).} Since \(\mu_1\geq\mu_2\geq\mu_3\), \(\mu\) is an integer partition of three parts; write \(\mu\in\Par_3\). Where \(\Z_{\geq 0}\) is the set of nonnegative integers and \(\T\) is the set of triad isomorphism classes, this gives a bijective correspondence
\[\T\leftrightarrow\Par_3\times\Z_{\geq 0}\text.\]
Write \(\Tr_{\mu w}\) for the triad described above, and \(s_{\mu w}=s_{\mu w}(G)\) for the number of triads of \(G\) isomorphic to \(\Tr_{\mu w}\). The {\df (full) triad census} of \(G\) is then the array \((s_{\mu w})_{\mu,w}\). The partitions \(\Par_3\) can be totally ordered, and thereby the census arranged in a matrix, whose size depends on the network.\footnote{Where \(n=\max(\mu)\), there are bijections \(\sigma:\Par_3^{(n)}\to\setchoose{n+3}{3}\), from the partitions in \(\Par_3\) having parts \(\leq n\) to the subsets of \(\{1,\ldots,n+3\}\) of size 3, and \(\rho:\setchoose{n+3}{3}\to\{1,\ldots,{{n+3}\choose{3}}\}\), which indexes these subsets; the composition \(\rho\circ\sigma:\Par_3^{(n)}\to\{1,\ldots,{{n+3}\choose{3}}\}\) indexes the partitions.
\(\sigma\) is a classical bijection \cite{s-enumerative};
\(\rho\) is the {\df revolving door ordering} \cite{ks-combinatorial}.} Necessarily, \(\sum_{\mu,w}s_{\mu w}={{|V_1|}\choose 3}\). The triads scheduled from \(i,j,k\) in Fig.~\ref{fig:kite} (a--d), for example, are \(\Tr_{(1,1,1),0}\), \(\Tr_{(0,0,0),1}\), \(\Tr_{(2,1,1),0}\), and \(\Tr_{(0,0,0),3}\).

This scheme explodes as networks grow dense. The following alternative scheme is instead bounded, but nonetheless captures useful affiliation structure: The events of a triad fall into four structural equivalence classes, according to which actors attend them. Instead of binning triads by {\em how many} events they have in each class, bin them by whether they contain {\em some} event in each class. If \(\Tr_{\mu w}\) has, for example, any inclusive event (i.e., if \(w>0\)), then \(\Tr_{\mu w}\) shares a bin with \(\Tr_{\mu,1}\); otherwise it is \(\Tr_{\mu,0}\). Each bin then contains exactly one representative \(\Tr_{\mu w}\) with \(\mu_1,\mu_2,\mu_3,w\in\{0,1\}\), and this bin is determined by the two numbers \(x=\mu_1+\mu_2+\mu_3\in\{0,1,2,3\}\) and \(y=w\in\{0,1\}\). The {\df structural triad census} consists of the eight tallies \(t_{xy}\) of triads in each bin. Though containing only twice as many bins as the simple census, the structural census contains useful additional information (see Thm.~\ref{thm:binning} and Sec.~\ref{sec:instrument}).

\begin{figure}[h]
  \centerline{
    \includegraphics[width=.32\textwidth, trim = 2.75cm 2cm 1.75cm 2cm, clip = true]{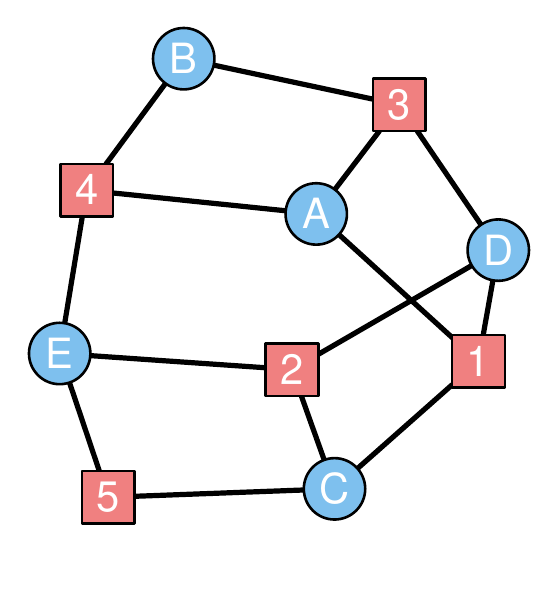}
    \hspace{-3ex}a\hspace{8ex}
    \begin{minipage}[t]{.2\textwidth}
      \vspace{-32ex}
      \mbox{\footnotesize\(\displaystyle\begin{array}{l|rr}
      \input{tab-dg2-census.txt}
      \end{array}\)}
    \end{minipage}
    \hspace{0ex}b\hspace{8ex}
    \includegraphics[width=.32\textwidth, trim = 2.75cm 2cm 1.75cm 2cm, clip = true]{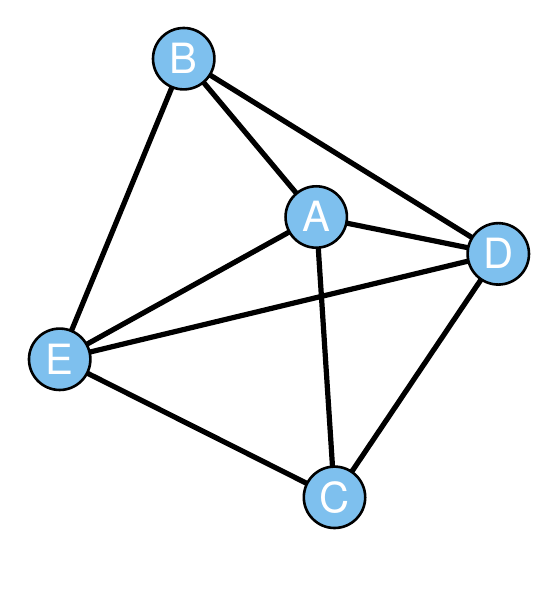}
    \hspace{-3ex}c\hspace{2.5ex}
  }
  \caption{The network DG2 (a), its full triad census (b), and its projection (c). The column in (b) indicates the number of inclusive events; the row indicates the distribution of exclusive events across pairs of actors. For example, the triad at \((\text{A},\text{B},\text{C})\) is tallied in column \(0\), row \((2,1,0)\) (see Ex.~\ref{ex:dg2}).}
  \label{fig:dg2}
\end{figure}

\begin{example}\label{ex:dg2}
The network DG2, depicted in Fig.~\ref{fig:dg2} with its full census and its projection, is taken from a famous study of the American racial caste system \cite{dgg-deep}. As an example of a social unit, the study presented attendance data for five acquainted women (``Miss A'' through ``Miss E'') and five social activities (bridge, dinner, movies, dance, and visiting), forming an AN. The projection contains three 2-edge and seven 3-edge triads, so the simple census is \((0,0,3,7)\). (Therefore, incidentally, \(C(\text{DG2})=\frac{3\times 7}{3+3\times 7}=\frac{7}{8}\).)

These tallies obscure higher-order structure: The seven fully-connected triads fall into four classes. One might be called ``symmetric'' and ``exclusive'': Mss.~B, D, and E attended no events together, but each pair were present at one, so that their triad is (isomorphic to) \(\Tr_{(1,1,1),0}\). Two triads were exclusive but not symmetric: Mss.~C and E attended two events without Ms.~A, though Ms.~A attended different, separate events with Mss.~C and E; they thus form a triad \(\Tr_{(2,1,1),0}\), as do Mss.~A, D, and E. The remaining four were ``inclusive'', in that all three women attended some event together (specifically, the four activities of attendance 3). In each case, at least one pair of women attended another event together, forming the triads \(\Tr_{(1,0,0),1}\) and \(\Tr_{(1,1,0),1}\). The women constituting each of the three 2-edge triads joined in no single activity together, instead forming three copies of \(\Tr_{(2,1,0),0}\). For example, Mss.~A and B attended two events together, Mss.~A and C one, and Mss.~B and C none. (As an exercise, the reader might recover the structural census from Fig.~\ref{fig:dg2}a,b.)
\end{example}

\subsection{Category framework}\label{sec:category}

\paragraph{Graph maps}

A generic clustering coefficient will be defined in terms of graph maps. For present purposes, a graph map \(\phi:G\to H\) (``\(\phi\) from \(G\) to \(H\)'') shall assign each node \(v\) of \(G\) to a node \(\phi(v)\) in \(H\) (the image of \(v\) under \(\phi\)) in such a way that every edge \((v,w)\) in \(G\) is preserved, i.e.\ \((\phi(v),\phi(w))\) is an edge in \(H\). One example is the inclusion of a subgraph \(G\subseteq H\). A graph map \(\phi:G\to H\) is called induced if the image \(\phi(G)\subseteq H\) is an induced subgraph. The images \(\phi(v)\) in \(H\) and the preserved edges among them form the image of \(G\) in \(H\). Two graph maps \(\phi:G\to H\) and \(\psi:H\to K\) yield the composition \(\psi\circ\phi:G\to K\) defined by \(\psi\circ\phi(v)=\psi(\phi(v))\). Such a graph map \(\psi\circ\phi:G\to K\) is said to factor through \(H\); for example, any map \(\phi:G\to H\) factors through its image \(\phi(G)\subseteq H\).

A graph map \(\phi:G\to H\) is injective if it sends no two nodes in \(G\) to the same node in \(H\), and surjective if every node in \(H\) is the image of some node in \(G\) (its pre-image); by a ``copy'' of \(G\) in \(H\), or a path or cycle ``in \(G\)'', shall be meant the image of an injective map. (By convention, paths and cycles in an AN arise from maps that send \(v_0\) to an actor.) Thus a 4-path \(\phi:P_4\to G\) is closed if it factors through \(C_6\).

An injective, surjective map is bijective, and a bijective map \(\phi:G\to H\) is an isomorphism if it is induced---that is, if it preserves absences of edges (\((\phi(v),\phi(w))\notin G\) whenever \((v,w)\notin H\)). The isomorphisms establish an equivalence relation on graphs; two graphs related by an isomorphism are said to be isomorphic, and to lie in the same isomorphism class. Two nodes \(v,w\in G\) are structurally equivalent if there is an isomorphism \(G\to G\) that sends every node to itself except exchanges \(v\) and \(w\); this establishes an equivalence relation on the nodes of \(G\).

\paragraph{Categories}

The framework of category theory, though not necessary, absorbs some useful and unobjectionable yet messy assumptions into the notation, provides a catalogue of natural examples, and avoids unnecessary constraints on the range of possibilities.\footnote{While there are infinitely many AN triads, their combinatorial complexity is limited (see Sec.~\ref{sec:classification}). It would be short work to classify a useful collection (19, by the author's count) of clustering coefficients, in the sense of Def.~\ref{def:Chat} and including \(C\), \(C\opsahl\), and \(C\excl\), by which structural equivalence classes of events the events of \(W\) and \(X\) may be mapped to, and which of these should then be considered congruent. This scheme, however, would omit more ad hoc clustering coefficients, for instance one that requires the events \(v_1,v_3\) of \(W\) to be mapped to exclusive events but places no such constraint on \(v_5\) in \(X\). Such a statistic would violate Axiom~\ref{ax:induced}, but may be very useful in certain settings (compare to the discussion of STC in Sec.~\ref{sec:properties}).}

A {category} \(\C\) consists of a set of {objects}; for each pair of objects \(A,B\), a set \(\Hom_{\C}(A,B)\) of {morphisms} from \(A\) to \(B\); and, for each pair of morphisms \(f\in\Hom_\C(B,C)\) and \(g\in\Hom_\C(A,B)\), the {composition} \(f\circ g\in\Hom_\C(A,C)\); all subject to the following conditions \cite{m-theory}:
\begin{enumerate}
  \renewcommand{\theenumi}{\roman{enumi}}
  \item\label{item:identity} (Identity) For every \(A\in\C\) there exists \(\id_A\in\Hom_\C(A,A)\) satisfying \(f\circ \id_A=f\) and \(\id_A\circ g=g\) for any \(f\in\Hom_\C(A,B)\) or \(g\in\Hom_\C(C,A)\).
  \item\label{item:associativity} (Associativity) For any triple of morphisms \(f\in\Hom_\C(C,D)\), \(g\in\Hom_\C(B,C)\), and \(h\in\Hom_\C(A,B)\), \(f\circ(g\circ h)=(f\circ g)\circ h\).
\end{enumerate}
A {subcategory} \(\C'\subseteq\C\) consists of the same objects as \(\C\) and subsets \(\Hom_{\C'}(A,B)\subseteq\Hom_\C(A,B)\) that also form a category.
A {congruence relation} \(\sim\) on \(\C\) consists of equivalence relations \(\sim_{A,B}\) on each \(\Hom_\C(A,B)\) that are compatible with the composition of morphisms, so that the {quotient category} \(\C/\sim\) is determined by the objects of \(\C\) and the equivalence classes of morphisms of \(\C\) under \(\sim\).

Henceforth, view \(\T\) as the category of AN triads, with morphisms the graph maps \(\phi:H\to K\) that assign the actors of \(H\) to distinct actors of \(K\) (and therefore send events only to events), and with composition given by \((f\circ g)(v)=f(g(v))\).
\(\T\) can be viewed as a subcategory of the category of graphs \cite{h-introduction} (with many objects omitted).
Write \(\Hom_\T^K(G,H)\) for the set of morphisms from \(G\) to \(H\) that factor through \(K\). If \(G\) is any AN, write \(\Hom_\T(H,G)\) (an abuse of notation) for the set of all morphisms from \(H\) to any triad of \(G\).

\paragraph{Clustering coefficients}\label{sec:coefficients}

All three clustering coefficients described in Sec.~\ref{sec:exclusive} are expressible in category-theoretic terms. 
Let \(\approx\) denote the congruence relation on \(\T\) given by taking any two maps that agree on actors to be congruent. For example, there is only one graph map from \(P_4\) to the kite graph (a) in Fig.~\ref{fig:kite} that sends \(v_0,v_2,v_4\) to \(i,j,k\) (respectively), and likewise only one such map to (b). However, there are several such maps to (c), which are all congruent in \(\T/\approx\). Thus \(\approx\) is a ``strong'' relation in that it relates very many morphisms. It turns out that, for an affiliation network \(G\),
\begin{equation}\label{eq:Caff}
C(G)
\ \ =\ \ \frac{|\Hom_{\T/\approx}^{C_6}(P_4,G)|}{|\Hom_{\T/\approx}(P_4,G)|}
\ \ =\ \ \frac{|\Hom_{\T/\approx}(C_6,G)|}{|\Hom_{\T/\approx}(P_4,G)|}\text.
\end{equation}

The Opsahl clustering coefficient restricts the morphisms in Eq.~\ref{eq:Caff} to injective graph maps. It is straightforward to check that these form a subcategory \(\widetilde\T\subset\T\). No congruence relation was imposed; for consistency of notation, write \(\widetilde\T/=\) for \(\widetilde\T\), where \(=\) denotes equality of graph maps (the weakest possible relation). \(C\opsahl\) is then realized as
\begin{equation}\label{eq:Copsahl}
C\opsahl(G)
\ \ =\ \ \frac{|\Hom_{\widetilde\T/=}^{C_6}(P_4,G)|}{|\Hom_{\widetilde\T/=}(P_4,G)|}\text,
\end{equation}
analogously to the first formulation in Eq.~\ref{eq:Caff}.
The present proposal further restricts the morphisms in Eq.~\ref{eq:Copsahl} to induced injections. These turn out to form their own subcategory \(\overline\T\subset\widetilde\T\).
Additionally, the graph maps that agree on actors {\em and that send events to structurally equivalent images} constitute a congruence relation \(\simeq\) on \(\overline\T\) (or \(\T\)), which is weaker than \(\approx\) but stronger than \(=\). The statistic \(C\excl\) is then realized as
\begin{eqnarray}\label{eq:Cexcl}
C\excl(G)
& =& \frac{|\Hom_{\overline\T/\simeq}^{C_6}(P_4,G)|}{|\Hom_{\overline\T/\simeq}(P_4,G)|}
\ \ =\ \ \frac{|\Hom_{\overline\T/\simeq}(C_6,G)|}{|\Hom_{\overline\T/\simeq}(P_4,G)|}\text.
\end{eqnarray}

\subsection{Axiomatic approach}\label{sec:axiomatic}

\paragraph{General formulation}

What is a ``clustering coefficient'', especially in the AN setting? Sec.~\ref{sec:category} formulated three variations on the idea, and this section presents a single unifying definition.

The statistics \(C\) and \(C\opsahl\) differ in three respects: the choice between the formulations in Eq.~\ref{eq:Caff} (which sometimes agree), the subcategory of graph maps from which the morphisms in Eq.~\ref{eq:Caff} are drawn, and the congruence relation imposed on them. Whereas \(P_4\) (isomorphic to \(\Tr_{(1,1,0),0}\)) and \(C_6\) (isomorphic to \(\Tr_{(1,1,1),0}\)) are now recognizable as two among an infinite collection of triads (see Fig.~\ref{fig:triad}), a fourth choice presents: What makes a triple of actors ``open'' or ``closed''? Another direct approach \cite{lr-clustering} considered three alternatives to \(C_6\): \(\Tr_{(1,1,0),1}\), \(\Tr_{(1,0,0),2}\), and \(\Tr_{(0,0,0),3}\). (These are the four AN triads whose duals are also triads, and in fact are self-dual \cite{b-duality}.) Alternatives to \(P_4\), sometimes taken in pairs, were obtained by removing a single event from these. The four choices thus outlined are incorporated into the following general definition:

\begin{definition}\label{def:Chat}
Pick canonical triads \(X\in\T\) and \(W\subset X\), a canonical subgraph relation \(\iota:W\to X\) (there may be many), a subcategory \(\C\subseteq\T\), and a congruence relation \(\sim\) on \(\C\). A {\df (global) clustering coefficient} of \(G\) shall be a statistic of either form
\begin{eqnarray}\label{eq:crate}
\widehat{C}(G)
& = & \displaystyle\frac{|\Hom_{\C/\sim}^X(W,G)|}{|\Hom_{\C/\sim}(W,G)|}
\ \ \ \ \text{(``rate of wedge closure'') or} \\\label{eq:mratio}
\widehat{C}(G)
& = & \displaystyle\frac{|\Hom_{\C/\sim}(X,G)|}{|\Hom_{\C/\sim}(W,G)|}
\ \ \ \ \text{(``alcove-to-wedge ratio'')\text,}
\end{eqnarray}
where morphisms factor through \(X\) only via \(\iota\). Call the morphisms \(\Hom_{\C/\sim}(W,G)\) the {\df wedges} of \(G\)---{\df closed} if they factor through \(X\), {\df open} if not---and \(\Hom_{\C/\sim}(X,G)\) the {\df alcoves} of \(G\).

Further designate a {\df center} actor \(v_c\in\{p,q,r\}\) in (each) \(W\). Given an actor \(j\in G\), obtain the {\df (local) clustering coefficient} \(\widehat{C}(j)\) of \(j\) by requiring of the morphisms in Eq.~\ref{eq:crate} or \ref{eq:mratio} that \(\phi(v_c)=j\) and \(\psi(\iota(v_c))=j\)---that is, that wedges and alcoves are {\df centered} at \(j\). The {\df wedge-dependent clustering coefficient} \(\widehat{C}_\ell\) of an affiliation network \(G\) shall be the mean value of \(\widehat{C}(j)\) across the actors \(j\) at which exactly \(\ell\) wedges are centered.
\end{definition}

By letting \(X\) range over the four self-dual triads; \(\C\) over \(\T\supseteq\widetilde\T\supseteq\overline\T\); \(\sim\) over \(=\), \(\simeq\), and \(\approx\); and adopting either Eq.~\ref{eq:crate} or \ref{eq:mratio}, Def.~\ref{def:Chat} specializes to \(4\times 3\times 3\times 2=72\) fairly straightforward statistics, which include \(C\), \(C\opsahl\), and \(C\excl\).\footnote{Some of these turn out to be the same statistic; for example, assuming \(X=\Tr_{(1,1,1),0}\), the category choices \(\overline\T/\simeq\) and \(\overline\T/\approx\) both yield \(C\excl\).} For present purposes, the best choice of \(X\) is clearly \(\Tr_{(1,1,1),0}\), leaving \(W=\Tr_{(1,1,0),0}\). These choices are assumed henceforth. (Note, however, that Thm.~\ref{thm:census} does not require this assumption.)

\begin{table}[h]
  \caption{Three measures of global and local triadic closure in DG2.}
  \label{tab:dg2local}
  \begin{minipage}{\textwidth}
\begin{tabular}{crrrrrr}
  \hline
\hline
 & DG2 & Miss A & Miss B & Miss C & Miss D & Miss E \\ 
  \hline
Classical & \(0.875\) & \(0.833\) & \(1.000\) & \(1.000\) & \(0.833\) & \(0.833\) \\ 
  Opsahl & \(0.611\) & \(0.500\) & \(0.667\) & \(0.667\) & \(0.600\) & \(0.714\) \\ 
  Exclusive & \(0.600\) & \(0.500\) & \(1.000\) & \(0.500\) & \(0.500\) & \(0.750\) \\ 
   \hline
\hline
\end{tabular}

  \end{minipage}
\end{table}

\begin{example}
Evaluations of \(C\), \(C\opsahl\), and \(C\excl\) at DG2 (Table~\ref{tab:dg2local}) are illustrative: Each pair of women differ by at least one statistic, implying that they all occupy structurally distinct neighborhoods; none of the statistics, however, distinguishes them all. While \(C\opsahl\) and \(C\excl\) take lower values than \(C\), the rankings of the actors are loosely correlated. Of particular interest are Mss.~B and C, whom \(C\opsahl\) and \(C\) do not distinguish but who take opposite values of \(C\excl\). At Miss B, the 4-path \((A,3,B,4,E)\) is an open wedge to \(C\opsahl\) but not a wedge at all to \(C\excl\); at Miss C, the 4-path \((D,1,C,5,E)\) is as a wedge to both \(C\opsahl\) and \(C\excl\) but only closed to \(C\opsahl\).

\(C\opsahl\) attributes high TC to Miss C because her friends remain better-connected when she is removed from the network, {\em while the events she attended remain}. In contrast, \(C\excl\) attributes high TC to Miss B because her friends remain better-connected when she is removed from the network {\em along with the events she attended}. The statistic \(C\opsahl\) thus detects TC that relies in part on inclusive events, which \(C\excl\) does not.
\end{example}

The remainder of this section comes with a warning that the labeling schemes for triad nodes vary by context: Canonical triads \(\Tr_{\mu w}\) have actors \(p,q,r\) such that \(w_{pq}\geq w_{qr}\geq w_{pr}\) (and unlabeled events); \(W\) and \(X\) adopt the schemes \(v_0,v_1,\ldots\) for \(P_4\) and \(C_6\) from Sec.~\ref{sec:exclusive}; and triads in larger ANs are scheduled at (ordered) triples of actors \((i,j,k)\) with events \(a,b,\ldots\).

\paragraph{Axioms}

Sec.~\ref{sec:exclusive} delineated three goals for a new clustering coefficient: account for event size, as \(C\opsahl\) does but \(C\) does not; further account for repeat group attendance, as neither \(C\) nor \(C\opsahl\) do; and weight actors equally, as \(C\) does but \(C\opsahl\) does not. This section wraps these desiderata into four axioms on \(\widehat C\). These are not suited to all settings, but they do help organize the myriad statistics that arise from Def.~\ref{def:Chat}.

\begin{figure}[h]
\centerline{
\includegraphics[width=.2\textwidth, trim = 2.5cm 2.75cm 1.25cm 2.25cm, clip = false]{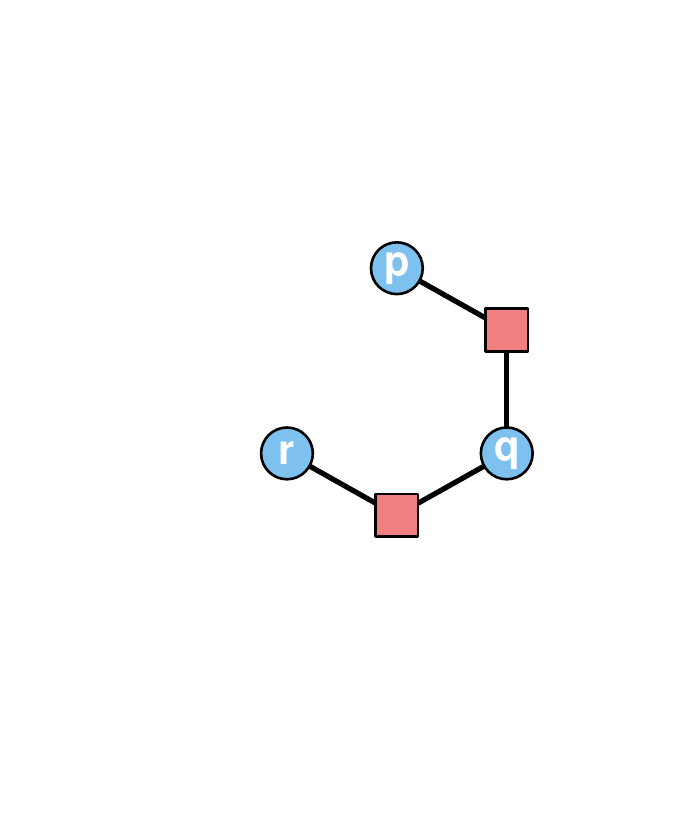}
\hspace{-3ex}a\hspace{1.5ex}\hfill
\includegraphics[width=.2\textwidth, trim = 2.5cm 2.75cm 1.25cm 2.25cm, clip = false]{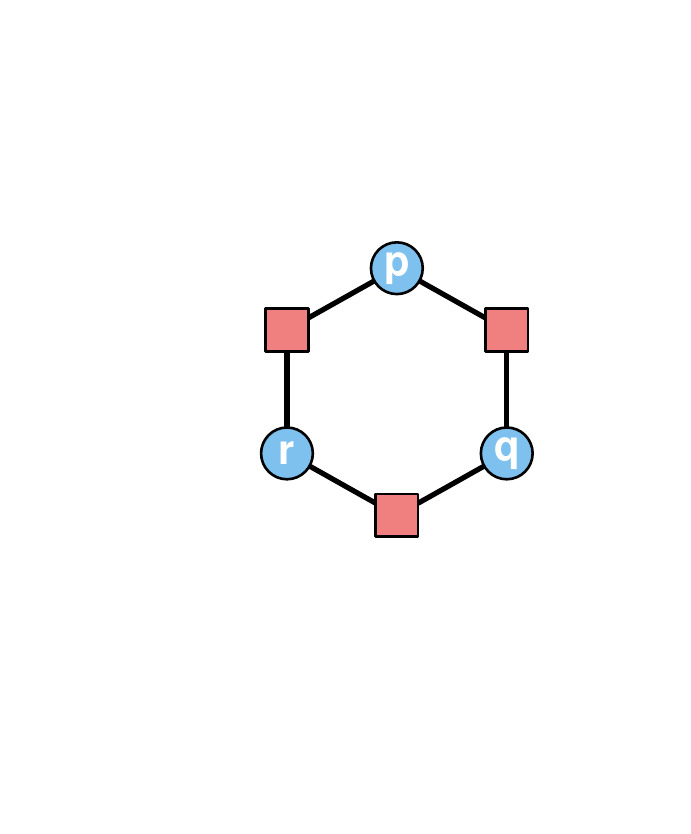}
\hspace{-3ex}b\hspace{1.5ex}\hfill
\includegraphics[width=.2\textwidth, trim = 2.5cm 2.75cm 1.25cm 2.25cm, clip = false]{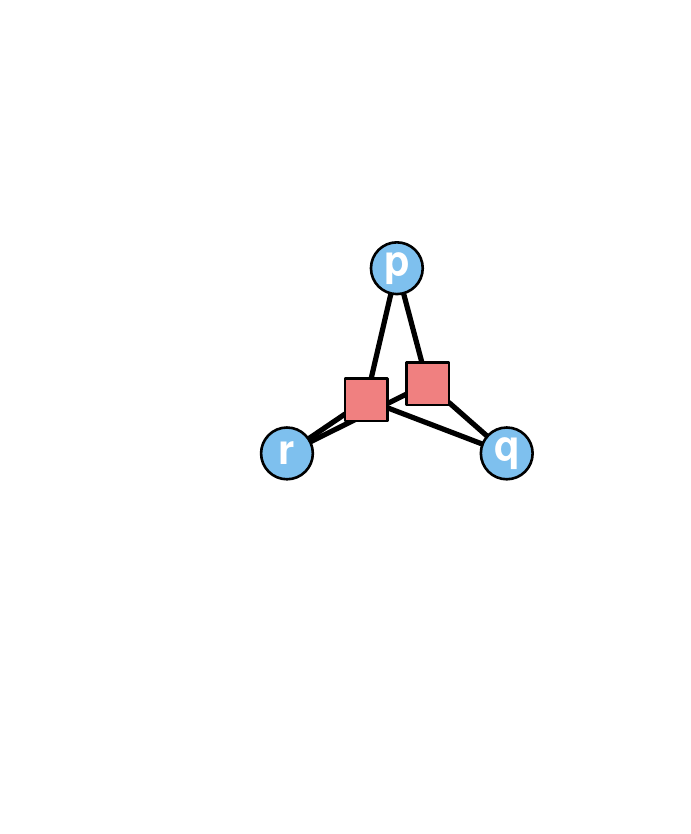}
\hspace{-3ex}c\hspace{1.5ex}\hfill
\includegraphics[width=.2\textwidth, trim = 2.5cm 2.75cm 1.25cm 2.25cm, clip = false]{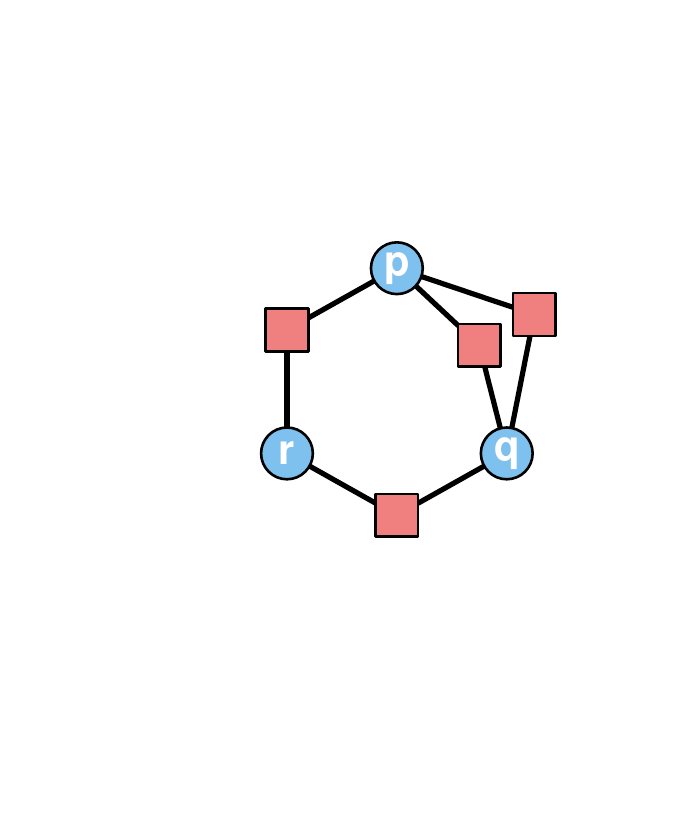}
\hspace{-3ex}d\hspace{1.5ex}
}
\caption{Four AN triads from the axiomatic analysis: (a) \(\Tr_{(1,1,0),0}\), isomorphic to \(W\); (b) \(\Tr_{(1,1,1),0}\), isomorphic to \(X\); (c) \(\Tr_{(0,0,0),2}\), from the discussion of Axiom~\ref{ax:buckle}; and (d) \(\Tr_{(2,1,1),0}\), from the discussion of Lemma~\ref{lem:pullback}.}
\label{fig:triad}
\end{figure}

The first two axioms capture important features of \(C\opsahl\). In order to prevent single events from forming closed wedges, \(C\opsahl\) is defined using only injections, from \(\widetilde\T\); Axiom~\ref{ax:induced} requires that \(\C\) include induced injections (though not all injections). In order to allow distinct events to contribute distinct wedges, \(C\opsahl\) removed the very strong congruence relation \(\approx\) imposed on the morphisms of \(C\); Axiom~\ref{ax:incongruence} allows equivalences only when events are at least structurally equivalent.

\begin{axiom}[Induced injections]\label{ax:induced}
All induced injections (hence all isomorphisms) are morphisms (i.e.\ \(\C\) contains \(\overline\T\)).
\end{axiom}

\begin{axiom}[Structural equivalence]\label{ax:incongruence}
The images of an (event) node under congruent morphisms are structurally equivalent (i.e.\ \(\sim\) is no stronger than \(\simeq\)).
\end{axiom}

The last two axioms address the concerns raised with \(C\opsahl\). Axiom~\ref{ax:equal} addresses the problem of weighting by admitting at most one wedge at any ordered triple. Axiom~\ref{ax:buckle} addresses the influence of bicliques by attacking their symptom: the counterintuitive way that each actor of a triad can have a wedge with none of the wedges being closed, which is not possible under \(C\). The idea is that two wedges with different centers ``hook together'' (overlap) at their shared ``side'' (pair of actors), closing each other, which is here called ``wedge buckling''. (Imagine rotating either ``open'' triad in Fig.~\ref{fig:TC} by \(120^\circ\) and overlaying it with itself.) \(C\opsahl\) violates this idea, for example at Fig.~\ref{fig:triad}c.\footnote{One could instead simply impose as an axiom the restriction of wedges and alcoves to exclusive events; Axiom~\ref{ax:buckle} provides an alternative framing for the problem.}

\begin{axiom}[Equal representation]\label{ax:equal}
At each ordered triple there exists exactly one of the following: no wedge, one open wedge, or one closed wedge.
\end{axiom}

\begin{axiom}[Wedge buckle]\label{ax:buckle}
If wedges exist at two ordered triples with different centers in a triad, then both are closed.
\end{axiom}

\paragraph{Theorems}

Three useful properties follow from certain subsets of the axioms: two triadic formulations of \(\widehat C\), which aide conceptualization and computation (Thms.~\ref{thm:census} and \ref{thm:binning}), and one characterization (Thm.~\ref{thm:existsunique}). The proofs constitute the next section.

\begin{theorem}[Census formulation]\label{thm:census}
For each triad \(\Tr_{\mu w}\), write the numbers
\begin{eqnarray*}
F_{\mu w}& =& |\Hom_{\C/\sim}(W,\Tr_{\mu w})\setminus\Hom_{\C/\sim}^X(W,\Tr_{\mu w})|
\\
S_{\mu w}& =& |\Hom_{\C/\sim}^X(W,\Tr_{\mu w})|
\end{eqnarray*}
of open and closed wedges, respectively, at \(\Tr_{\mu w}\).
If \(\widehat C\) is defined using Eq.~\ref{eq:crate}, then
\begin{equation}\label{eq:census}
\widehat{C}(G)
=\frac{\displaystyle\sum_{\mu, w}s_{\mu w}(G)S_{\mu w}}{\displaystyle\sum_{\mu, w}s_{\mu w}(G)(F_{\mu w}+S_{\mu w})}\text.
\end{equation}
\end{theorem}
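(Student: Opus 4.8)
The plan is to sort the wedges and closed wedges of $G$ by which triad of $G$ each one inhabits, and then to collect those triads into isomorphism classes. The first step is to observe that a morphism $\phi:W\to G$, being a morphism in $\T$, carries the three actors of $W$ to distinct actors $i,j,k$ of $G$; and since $W$ is itself a triad, each of its events is attended by exactly two of its actors (Def.~\ref{def:scheduled}), so, edges being preserved, each such event is sent to an event of $G$ attended by at least two of $i,j,k$. Hence $\phi$ lands in the triad $T_\phi$ scheduled by $\{i,j,k\}$, and in no other triad of $G$, so $\Hom_{\C}(W,G)=\bigsqcup_{T}\Hom_{\C}(W,T)$ is a disjoint union over the ${{|V_1|}\choose 3}$ triads $T$ of $G$, and likewise for the closed wedges $\Hom_{\C}^{X}(W,G)=\bigsqcup_{T}\Hom_{\C}^{X}(W,T)$. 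By Axiom~\ref{ax:incongruence}, congruent morphisms agree on actors and so inhabit a common triad, so both decompositions descend to $\C/\sim$.

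Next I would show that each summand depends on $T$ only through its isomorphism class. Every triad $T$ of $G$ is isomorphic to a unique canonical triad $\Tr_{\mu w}$ (Sec.~\ref{sec:classification}), and exactly $s_{\mu w}(G)$ triads of $G$ lie in that class. Fixing an isomorphism $\alpha:T\to\Tr_{\mu w}$, Axiom~\ref{ax:induced} makes $\alpha$ and $\alpha^{-1}$ morphisms of $\C$, so post-composition $\phi\mapsto\alpha\circ\phi$ is a bijection $\Hom_{\C}(W,T)\to\Hom_{\C}(W,\Tr_{\mu w})$. It respects $\sim$ by left-compatibility of a congruence relation, hence descends to the quotient hom-sets; and since it carries a wedge factoring as $\chi\circ\iota$ to $(\alpha\circ\chi)\circ\iota$, it matches closed wedges with closed wedges, and therefore open with open. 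So $|\Hom_{\C/\sim}(W,T)|=F_{\mu w}+S_{\mu w}$ and $|\Hom_{\C/\sim}^{X}(W,T)|=S_{\mu w}$, independently of which $T$ in the class is chosen.

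Combining the two steps would give $|\Hom_{\C/\sim}^{X}(W,G)|=\sum_{\mu,w}s_{\mu w}(G)\,S_{\mu w}$ and $|\Hom_{\C/\sim}(W,G)|=\sum_{\mu,w}s_{\mu w}(G)\,(F_{\mu w}+S_{\mu w})$, and substituting into Eq.~\ref{eq:crate} gives Eq.~\ref{eq:census}. I expect the main obstacle to be bookkeeping inside the quotient category $\C/\sim$: one must check that ``being closed'' (factoring through $X$ via $\iota$) is a property of congruence classes that is preserved by the transport bijection of the second step, and---for the first decomposition even to make sense---that congruent wedges inhabit a common triad. These are exactly the places Axioms~\ref{ax:induced} and~\ref{ax:incongruence} are used; everything else is counting. (There is no comparably clean identity for $\widehat C$ defined via Eq.~\ref{eq:mratio}, since $|\Hom_{\C/\sim}(X,\Tr_{\mu w})|$ need not equal $S_{\mu w}$.)
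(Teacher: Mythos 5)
Your proposal is correct and takes essentially the same route as the paper's proof: partition the wedges according to the triad of \(G\) containing their image, then group the triads by isomorphism class and use that the (quotient) hom-set counts are invariant under isomorphism. The extra bookkeeping you supply---that congruent wedges inhabit a common triad (via Axiom~\ref{ax:incongruence}) and that transport along an isomorphism respects congruence and closedness (via Axiom~\ref{ax:induced})---is precisely what the paper leaves implicit in the phrase ``the morphism counts are fixed for isomorphic triads.''
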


Thm.~\ref{thm:census} decomposes the rate-of-closure calculation into a ratio of motifs, according to the distribution of triads in \(G\). The theorem proves useful in implementing the various global statistics, which may then be computed via arithmetic on the full census.

\begin{theorem}[Binning formulation]\label{thm:binning}
Assume Axioms~\ref{ax:induced}, \ref{ax:equal}, and \ref{ax:buckle}.
Then the triads of \(G\) can be binned into subsets \(S_\emptyset(G)\), \(S_W(G)\), and \(S_X(G)\) according as they contain none, two open, or six closed wedges; and
\begin{equation}\label{eq:binning}
\widehat{C}(G)=\frac{3|S_X(G)|}{|S_W(G)|+3|S_X(G)|}\text.
\end{equation}
\end{theorem}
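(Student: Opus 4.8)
The plan is to reduce \(\widehat C(G)\) to a local count inside each triad and then reassemble it via Thm.~\ref{thm:census}; throughout we take \(\widehat C\) in the rate-of-closure form, Eq.~\ref{eq:crate}. Write \(X=\Tr_{(1,1,1),0}\cong C_6\) and \(W=\Tr_{(1,1,0),0}\cong P_4\) with center \(v_c=v_2\), let \(\iota:W\to X\) be the canonical inclusion, and note that \(C_6\) carries an induced \(P_4\) centered at each of its three actors; fix induced inclusions \(\iota_1,\iota_2,\iota_3:W\hookrightarrow X\) (all in \(\overline\T\subseteq\C\) by Axiom~\ref{ax:induced}) with \(\iota_k(v_c)\) the \(k\)th actor of \(X\). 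Since a morphism \(W\to G\) has its image contained in the unique triad scheduled by its three actor-images, \(\Hom_{\C/\sim}(W,G)\) is the disjoint union, over triads \(T\) of \(G\), of the \(\Hom_{\C/\sim}(W,T)\), and likewise for the subsets of morphisms factoring through \(X\). So it suffices to determine how many open and how many closed wedges each triad \(T\) carries.

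Two observations accomplish this. First, \emph{wedges occur in center-matched pairs}: the nontrivial automorphism \(\alpha\) of \(W\) (interchanging \(v_0\leftrightarrow v_4\), \(v_1\leftrightarrow v_3\)) lies in \(\overline\T\subseteq\C\), and there is a bipartition-preserving automorphism \(\beta\) of \(X\) (the reflection of \(C_6\) fixing \(\iota(v_c)\)) with \(\iota\circ\alpha=\beta\circ\iota\); hence \(\phi\mapsto\phi\circ\alpha\) is an involution of the wedges of \(T\) that fixes the center and preserves closedness (\(\psi\circ\iota\mapsto(\psi\circ\beta)\circ\iota\)), so with Axiom~\ref{ax:equal} each actor of \(T\) is the center of either no wedge or exactly two, all of the same type. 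Second, \emph{one closed wedge propagates to every center}: if \(\phi=\psi\circ\iota\) is closed with \(\psi\in\Hom_\C(X,T)\), then \(\psi\circ\iota_1,\psi\circ\iota_2,\psi\circ\iota_3\in\Hom_\C(W,T)\) are wedges whose centers \(\psi(\iota_k(v_c))\) run over all three actors of \(T\), since \(\psi\) takes the actors of \(X\) bijectively to those of \(T\).

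Now invoke Axiom~\ref{ax:buckle}. If no actor of \(T\) is a wedge center, \(T\) carries no wedge. If some wedge of \(T\) is closed, then by propagation every actor is a wedge center, so two ordered triples with distinct centers carry wedges, and applying Axiom~\ref{ax:buckle} to each pair of centers shows that all six wedges are closed. The only remaining possibility is that exactly one actor is a wedge center: it cannot be two, since two would (Axiom~\ref{ax:buckle}) make those wedges closed and hence, by propagation, three; and the two wedges at the single center must then be open, or propagation would again give three. Thus the triads of \(G\) partition into \(S_\emptyset(G)\) (no wedge), \(S_W(G)\) (exactly two, open), and \(S_X(G)\) (exactly six, closed).

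Consequently \((F_{\mu w},S_{\mu w})\in\{(0,0),(2,0),(0,6)\}\) on every isomorphism class, and Thm.~\ref{thm:census} gives \(\widehat C(G)=6|S_X(G)|/(2|S_W(G)|+6|S_X(G)|)=3|S_X(G)|/(|S_W(G)|+3|S_X(G)|)\); for the alcove-to-wedge form, Eq.~\ref{eq:mratio}, one verifies analogously that each triad in \(S_X(G)\) carries exactly six alcoves, so the numerator is again \(6|S_X(G)|\). I expect the main obstacle to be the propagation step together with the resulting all-or-nothing trichotomy—recognizing that the alcove witnessing a single closed wedge, precomposed with the three embeddings \(W\hookrightarrow X\), forces a wedge at every center—after which only bookkeeping and the two cited results remain.
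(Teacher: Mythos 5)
Your proposal is correct and takes essentially the same route as the paper: your two observations are precisely Lemmas~\ref{lem:exclusive}--\ref{lem:wedgecounts} (the \(v_0\leftrightarrow v_4\) involution pairing same-center wedges and preserving closedness, and propagation of a single closed wedge to all three centers using the symmetry of \(X\) together with Axiom~\ref{ax:buckle}), after which the formula drops out of Thm.~\ref{thm:census} exactly as you assemble it. Your per-triad counts \((F_{\mu w},S_{\mu w})\in\{(0,0),(2,0),(0,6)\}\) even match the theorem statement more literally than the paper's own proof (which writes \(1\) and \(3\), yielding the same ratio), so the only loose end is your parenthetical claim for Eq.~\ref{eq:mratio} that a closed triad has \emph{exactly} six alcoves, which under these three axioms alone only follows with ``at least''---but the paper's proof likewise treats only the rate-of-closure form, so this is not a gap in the stated result.
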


Thm.~\ref{thm:binning} generalizes the simple triad census description of \(C\) in Sec.~\ref{sec:introduction}. \(C\opsahl\) does not satisfy these criteria, but \(C\excl\) does: It is recoverable from the structural triad census as \(C\excl={3\times(t_{30}+t_{31})}/{(t_{20}+t_{21}+3\times(t_{30}+t_{31}))}\).

\begin{theorem}[Existence and uniqueness]\label{thm:existsunique}
There exist unique choices of \(X\), \(W\), \(\C\), and \(\sim\) that satisfy Axioms~\ref{ax:induced}, \ref{ax:incongruence}, and \ref{ax:equal}. Moreover, these choices also satisfy Axiom~\ref{ax:buckle}. Under them, Eqs.~\ref{eq:crate} and \ref{eq:mratio} both produce \(C\excl\).
\end{theorem}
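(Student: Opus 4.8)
The plan is to take $X=\Tr_{(1,1,1),0}$ and $W=\Tr_{(1,1,0),0}$ as fixed (the standing assumption made just before the theorem), with $\iota\colon W\hookrightarrow X$ the evident inclusion of a $4$-path into a $6$-cycle; this $\iota$ is unique up to $\mathrm{Aut}(C_6)$, which changes no $\Hom$-count, so it is harmless to fix it. The work then splits into an \emph{existence} part---the data $\C=\overline\T$, $\sim={\simeq}$ of Eq.~\ref{eq:Cexcl} satisfy all four axioms and make both Eqs.~\ref{eq:crate} and \ref{eq:mratio} evaluate to $C\excl$---and a \emph{uniqueness} part---every other admissible $(\C,\sim)$ violates one of Axioms~\ref{ax:induced}--\ref{ax:equal}. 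The one elementary fact I would isolate at the outset and reuse everywhere is that inside a triad (only three actors) two events with the same attendee set are structurally equivalent, and any two inclusive events are structurally equivalent; hence an induced injection out of $W$ or $X$ into a triad $Y$ is determined, modulo $\simeq$, by its restriction to actors.

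For existence, Axioms~\ref{ax:induced} and \ref{ax:incongruence} hold by the definitions of $\overline\T$ and $\simeq$. For Axiom~\ref{ax:equal}, the isolated fact shows that at an ordered triple $(i,j,k)$ of a triad there is exactly one wedge in $\overline\T/{\simeq}$ if both pairs $\{i,j\}$ and $\{j,k\}$ carry an exclusive event, and none otherwise. For Axiom~\ref{ax:buckle}, if wedges sit at two triples with distinct centers in one triad then all three of its pairs carry an exclusive event, so every wedge there extends along $\iota$ to an induced injection of $C_6$ and is therefore closed. For the two formulas I would invoke Thm.~\ref{thm:census} to reduce to proving, triad by triad, that $S_{\mu w}=|\Hom_{\overline\T/\simeq}(X,\Tr_{\mu w})|$: restriction along $\iota$ maps alcoves onto closed wedges by definition, and is injective modulo $\simeq$ again by the isolated fact (the one event of $C_6$ not met by $\iota$ can only land on an exclusive event between a fixed pair of $Y$'s actors, and those are all structurally equivalent in $Y$). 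Plugging these $S_{\mu w}$ and the matching $F_{\mu w}$ into Eq.~\ref{eq:census} returns the value already recorded as $C\excl$ in Eq.~\ref{eq:Cexcl}.

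For uniqueness, with $X,W$ fixed the remaining menu is $\C\in\{\overline\T,\widetilde\T,\T\}$ together with a congruence $\sim$ no stronger than $\simeq$. Axiom~\ref{ax:incongruence} deletes $\approx$. To force $\C=\overline\T$ it suffices, since $\overline\T\subseteq\widetilde\T\subseteq\T$, to see that $\widetilde\T$ violates Axiom~\ref{ax:equal} for every admissible $\sim$: in $Y=\Tr_{(1,1,0),1}$, with inclusive event $a$ and exclusive events $e_{ij},e_{jk}$, the two injections $P_4\to Y$ at the ordered triple $(i,j,k)$ sending $(v_1,v_3)$ to $(e_{ij},e_{jk})$ and to $(e_{ij},a)$ send $v_3$ to events with different neighborhoods in $Y$, so they are not $\simeq$-related, hence not $\sim$-related---two wedges at one triple. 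To then force $\sim={\simeq}$, I would take a pair of $\overline\T$-morphisms that are $\simeq$- but not $\sim$-related and, using the restriction-to-triads lemma (Lemma~\ref{lem:pullback}), relocate them onto a copy of $W$ (or $X$) inside the small triad $\Tr_{(2,1,1),0}$: there the two parallel exclusive events between one pair are structurally equivalent yet distinct, so they again furnish two competing wedges at a single ordered triple, contradicting Axiom~\ref{ax:equal}. That the surviving choices also satisfy Axiom~\ref{ax:buckle} was already checked in the existence step, and the last assertion of the theorem is the "both formulas" computation above.

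The hard part will be this last reduction---controlling the congruence. A pair of morphisms that are $\simeq$- but not $\sim$-related may a priori live in an arbitrarily large AN, so there is no immediate triad-level contradiction; Lemma~\ref{lem:pullback} is exactly the device that pulls such a pair back onto one of the small witnesses ($\Tr_{(2,1,1),0}$, and $\Tr_{(0,0,0),2}$ for the parallel discussion of Axiom~\ref{ax:buckle}), and carrying out that localization cleanly---and, if one prefers to let $\C$ range over all subcategories between $\overline\T$ and $\T$ rather than only the three named ones, using the same lemma to localize an arbitrary enlargement---is where the real content lies. Everything else is bookkeeping on the handful of triads drawn in Fig.~\ref{fig:triad}.
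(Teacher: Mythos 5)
Your overall skeleton matches the paper's (fix \(X,W\); show the axioms force induced injections and a congruence that acts like \(\simeq\) on wedges and alcoves; then verify \(\iota^\ast\) is injective so that Eqs.~\ref{eq:crate} and \ref{eq:mratio} agree, the paper's Lemma~\ref{lem:pullback}), and your existence checks and Axiom~\ref{ax:buckle} verification are fine. But the uniqueness half has a genuine gap that you yourself flag and then defer: Def.~\ref{def:Chat} quantifies over \emph{all} subcategories \(\C\subseteq\T\) containing \(\overline\T\) and \emph{all} congruences \(\sim\), not just the menu \(\{\overline\T,\widetilde\T,\T\}\times\{=,\simeq,\approx\}\). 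Your witness in \(\Tr_{(1,1,0),1}\) only rules out a \(\C\) that happens to contain that particular non-induced morphism, and it works only because competing exclusive wedges already exist in that triad; a hypothetical wedge through inclusive events into, say, \(\Tr_{(0,0,0),2}\) has no exclusive competitor in its ambient triad, so no contradiction with Axiom~\ref{ax:equal} is available \emph{in situ}. The paper closes exactly this hole with Lemma~\ref{lem:induced}'s extension trick: given any wedge or alcove using an inclusive event in any AN \(G\), adjoin two fresh exclusive events \(f,g\) to form \(G'\); the inclusion \(G\to G'\) is an induced injection, hence a morphism by Axiom~\ref{ax:induced}, so both the transported wedge and the new exclusive wedge sit at the same ordered triple of \(G'\), are incongruent by Axiom~\ref{ax:incongruence}, and together contradict Axiom~\ref{ax:equal}. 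This argument needs no assumption that \(\C\) is one of the three named categories and no pre-existing exclusive events, and it is the missing idea in your proposal.

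Two smaller corrections. First, Lemma~\ref{lem:pullback} is not a ``restriction-to-triads'' or localization lemma---it states that the two formulations agree iff \(\iota^\ast\) is injective---so it cannot do the relocation job you assign it, either for pinning down \(\sim\) or for handling arbitrary enlargements of \(\overline\T\). Second, the relocation to \(\Tr_{(2,1,1),0}\) is unnecessary: if two wedges are \(\simeq\)-related but not \(\sim\)-related, they already agree on actors and hence sit at the same ordered triple wherever they live, which violates Axiom~\ref{ax:equal} directly; this is the paper's one-line Lemma~\ref{lem:congruent}, which (after Lemma~\ref{lem:induced} forces all wedge/alcove events to be exclusive, where \(\simeq\) and \(\approx\) coincide) pins \(\sim\) down on the morphisms that matter. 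Note also that the paper's uniqueness is only ``as far as Def.~\ref{def:Chat} is concerned''---\(\sim\) is determined on wedges and alcoves, not literally on every Hom-set---so trying to force \(\sim={\simeq}\) globally is both harder than needed and not quite what is true.
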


Thm.~\ref{thm:existsunique} characterizes those specializations of \(\widehat C\) that satisfy every axiom. \(C\excl\) turns out to be the unique such statistic; any alternative to \(C\excl\) still expressible in terms of Def.~\ref{def:Chat} comes at the cost of at least one axiom.
At the heart of Thm.~\ref{thm:existsunique} lies the tension between Axiom~\ref{ax:incongruence} and Axiom~\ref{ax:equal}. The former forces different types of wedges to be treated differently, and the latter allows only one of these types to figure into the formula.

\subsection{Proofs}\label{sec:proofs}

\paragraph{Triadic formulations}

A different batch of lemmas leads up to each of the second two theorems, and Thm.~\ref{thm:binning} also depends on Thm.~\ref{thm:census}. To simplify the notation, in this section let \(\Hom\) (with no subscript) denote the unspecified \(\Hom_{\C/\sim}\).

\begin{proof}[Proof of Thm.~\ref{thm:census}]
The wedges \(\Hom(W,G)\) can be partitioned according to which triad of \(G\) contains their images.
The triads of \(G\) are, in turn, partitioned by the full census.
Since the morphisms counts are fixed for isomorphic triads,
\begin{equation*}
\widehat{C}
=\frac{\displaystyle\sum_{H\subseteq G}|\Hom^X(W,H)|}{\displaystyle\sum_{H\subseteq G}|\Hom(W,H)|}
=\frac{\displaystyle\sum_{\mu,w}\Big(\sum_{\Tr_{\mu w}\cong H\subseteq G}|\Hom^X(W,\Tr_{\mu w})|\Big)}{\displaystyle\sum_{\mu,w}\Big(\sum_{\Tr_{\mu w}\cong H\subseteq G}|\Hom(W,\Tr_{\mu w})|\Big)}
=\frac{\displaystyle\sum_{\mu, w}s_{\mu w}\times S_{\mu w}}{\displaystyle\sum_{\mu, w}s_{\mu w}\times(F_{\mu w}+S_{\mu w})}\text,
\end{equation*}
where \(H\subseteq G\) ranges over the triads of \(G\).
\end{proof}

\begin{lemma}\label{lem:exclusive}
Assume Axiom~\ref{ax:induced}.
\begin{enumerate}
\renewcommand{\theenumi}{\roman{enumi}}
\item\label{item:triples}
If \(\Tr_{\mu w}\) has an alcove, then every ordered triple of \(\Tr_{\mu w}\) has an alcove.
\item\label{item:bijection}
Given actors \(i,j,k\in G\), there is an openness-preserving bijection between the wedges of \(i,j,k\) and those of \(k,j,i\).
\end{enumerate}
\end{lemma}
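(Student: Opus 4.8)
The plan is to deduce both parts from the symmetry of the canonical triads $W\cong P_4$ and $X\cong C_6$, using the fact that every graph automorphism of a triad is an induced injection, hence a morphism in $\C$ by Axiom~\ref{ax:induced}, hence an isomorphism in $\C/\sim$ (its congruence class). Since precomposition by such an isomorphism is a bijection on the relevant $\Hom_{\C/\sim}$-sets, and $\sim$ being a congruence makes all of these constructions descend from $\C$ to $\C/\sim$, the argument reduces to exhibiting the right automorphisms and checking how they interact with the canonical inclusion $\iota:W\to X$.

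For part~(\ref{item:triples}), recall $X=\Tr_{(1,1,1),0}$ is the $6$-cycle $v_0v_1\cdots v_5v_0$ with actors $v_0,v_2,v_4$; its group of $\T$-automorphisms maps onto the full symmetric group $S_3$ on these actors, each actor permutation carrying along the matching permutation of the three exclusive events. Suppose $[\psi]\in\Hom_{\C/\sim}(X,\Tr_{\mu w})$ is an alcove. A $\T$-morphism sends actors to distinct actors, and $X$ and $\Tr_{\mu w}$ each have exactly three, so $\psi$ restricts to a bijection on actor sets. Given any prescribed ordering of the actors of $\Tr_{\mu w}$, pick the automorphism $\alpha$ of $X$ inducing, on actors, the permutation that converts $\psi$'s ordering into the prescribed one; then $[\psi]\circ[\alpha]$ is again an alcove and realizes that ordering. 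Hence an alcove at one ordered triple of $\Tr_{\mu w}$ yields alcoves at all six.

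For part~(\ref{item:bijection}), $W=P_4$ (path $v_0v_1v_2v_3v_4$, actors $v_0,v_2,v_4$, middle vertex $v_2$) has the reversal automorphism $\rho$, $\rho(v_t)=v_{4-t}$, an involution; being an isomorphism it is a morphism, so $[\phi]\mapsto[\phi]\circ[\rho]$ is a self-inverse bijection of $\Hom_{\C/\sim}(W,G)$. Because $\rho$ is an automorphism of $W$, $\phi\circ\rho$ has the same image as $\phi$—hence lies in the same triad of $G$—and sends $(v_0,v_2,v_4)$ to $(k,j,i)$ exactly when $\phi$ sends it to $(i,j,k)$; so this bijection carries wedges at $i,j,k$ to wedges at $k,j,i$. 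To see that openness is preserved, realize $\iota$ as the inclusion of the path $v_0\cdots v_4$ onto the sub-path $v_0\cdots v_4$ of the cycle $X=v_0\cdots v_5v_0$; then $\iota\circ\rho$ sends $v_t\mapsto v_{4-t}$ for $t\le4$, and the reflection $\sigma$ of $X$ given by $\sigma(v_t)=v_{4-t}$ ($t\le4$) and $\sigma(v_5)=v_5$ is a $\T$-automorphism of $X$ satisfying $\iota\circ\rho=\sigma\circ\iota$. Thus if $[\phi]=[\psi]\circ[\iota]$ is closed, then $[\phi]\circ[\rho]=[\psi]\circ[\sigma]\circ[\iota]$ is closed too; and since $\rho^2=\id$, the converse follows by symmetry.

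The content is bookkeeping rather than depth, and the one step deserving care is the identity $\iota\circ\rho=\sigma\circ\iota$ in part~(\ref{item:bijection}): for $\sigma$ to be a graph automorphism it must send $v_5$ to a common neighbour in $C_6$ of $\sigma(v_4)=v_0$ and $\sigma(v_0)=v_4$, and $v_5$ is the only such vertex, which fixes $\sigma$ completely; one then checks on the six edges of $C_6$ that $\sigma$ is an automorphism and that it respects the actor/event partition. The remaining routine checks are that the cyclic structure of $\Tr_{(1,1,1),0}$ and the path structure of $\Tr_{(1,1,0),0}$ really do supply the automorphisms $\alpha$ and $\rho$ used above, and that $\rho,\sigma,\alpha\in\overline\T\subseteq\C$ by Axiom~\ref{ax:induced}, so that all composites remain morphisms in $\C/\sim$.
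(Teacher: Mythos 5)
Your proposal is correct and follows essentially the same route as the paper: part~(\ref{item:triples}) by precomposing an alcove with an automorphism of \(X\) realizing the desired permutation of actors (a morphism by Axiom~\ref{ax:induced}), and part~(\ref{item:bijection}) by precomposing with the involutive reversal \(\rho\) of \(W\) and transferring closure via the compatible reflection of \(X\) (your \(\sigma\) is exactly the paper's \(\rho'\), with the identity \(\iota\circ\rho=\rho'\circ\iota\) as the key step). Your extra care in verifying \(\sigma\) and in noting that everything descends to \(\C/\sim\) is a harmless elaboration of the same argument.
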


Part~\ref{item:triples} follows from the symmetry of \(X\): Whatever the order of the actors, the structure of the triad is the same. Part~\ref{item:bijection} follows analogously from the more limited symmetry of \(W\), which allows \(v_0,v_1\) to be interchanged with \(v_4,v_3\) with no effect on the structure. (See Fig.~\ref{fig:triad}a,b.)

\begin{proof}
For \ref{item:triples}, pick \(\psi\in\Hom(X,\Tr_{\mu w})\) and suppose \(\psi\) takes \(v_0,v_2,v_4\) to \(i,j,k\).
Pick any permutation \(\pi\in S_3\) so that \(\pi(i,j,k)\) is an arbitrary ordered triple in \(\Tr_{\mu w}\), and let \(\rho_\pi:X\to X\) be the isomorphism taking \(v_0,v_2,v_4\) to \(\pi(v_0,v_2,v_4)\), which by Axiom~\ref{ax:induced} is a morphism.
The composition \(\psi\circ \rho_\pi:X\to\Tr_{\mu w}\) is then a morphism that takes \(v_0,v_2,v_4\) to \(\pi(i,j,k)\).

For \ref{item:bijection}, let \(\rho:W\to W\) be the isomorphism on \(W\) that exchanges \(v_0\) and \(v_4\), which is a morphism by Axiom~\ref{ax:induced}. Composition with \(\rho\) assigns any wedge \(\phi:W\to G\) that sends \(v_0,v_2,v_4\) to \(i,j,k\) to a wedge \(\phi\circ\rho\) that sends \(v_0,v_2,v_4\) to \(k,j,i\).
Moreover, since \(\rho\circ\rho\) is the identity morphism on \(W\), another composition with \(\rho\) takes \(\phi\circ\rho\) back to \((\phi\circ\rho)\circ\rho=\phi\circ(\rho\circ\rho)=\phi\).
Composition with \(\rho\) thus pairs up the wedges of the triad \(i,j,k\) centered at \(j\) (no wedge is paired with itself).
If such a wedge \(\phi\) factors through \(X\) as \(\phi=\psi\circ\iota\), then \(\phi\circ\rho\) factors through \(X\) as \(\phi\circ\rho=(\psi\circ\iota)\circ\rho=\psi\circ(\iota\circ\rho)=\psi\circ(\rho'\circ\iota)=(\psi\circ\rho')\circ\iota\), where \(\rho':X\to X\) is the isomorphism on \(X\) that exchanges \(v_0\) and \(v_4\). Thus \(\phi\) and \(\phi\circ\rho\) are open or closed together.
\end{proof}

The next two lemmas push the binning scheme of Thm.~\ref{thm:census} from triads to ordered triples. The simplicity of Eq.~\ref{eq:binning} comes from the fixed number of possible wedges (one for each ordered triple; Axiom~\ref{ax:equal}) and the symmetries between them (Lemma~\ref{lem:exclusive} and Axiom~\ref{ax:buckle}).

\begin{lemma}\label{lem:disambiguation}
Assume Axioms~\ref{ax:induced} and \ref{ax:buckle}.
Then, if a triad has two wedges with different centers, then every ordered triple in the triad has an alcove.
\end{lemma}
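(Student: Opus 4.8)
The plan is to obtain the statement from Lemma~\ref{lem:exclusive}(\ref{item:triples}) in three short steps: first use Axiom~\ref{ax:buckle} to upgrade the two given wedges to \emph{closed} wedges, then read an alcove off one of them, and finally let Lemma~\ref{lem:exclusive}(\ref{item:triples}) distribute that alcove over every ordered triple.

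In more detail, let $T$ be a triad with actors $p,q,r$, and suppose $T$ carries a wedge $\alpha$ centered at $q$ and a wedge $\beta$ centered at $r$. A wedge centered at $q$ is, by Def.~\ref{def:Chat}, a morphism $\phi\colon W\to T$ with $\phi(v_c)=q$, and such a $\phi$ occupies the ordered triple $(\phi(v_0),\phi(v_2),\phi(v_4))$, whose center entry $\phi(v_c)$ equals $q$; similarly $\beta$ occupies an ordered triple whose center entry is $r$. Since $q\neq r$, these are two ordered triples of $T$ with different centers, so Axiom~\ref{ax:buckle} applies and both $\alpha$ and $\beta$ are closed. Now unpack ``closed'': by Def.~\ref{def:Chat} a closed wedge factors through $X$ via the canonical inclusion $\iota$, so $\alpha=\psi\circ\iota$ for some $\psi\in\Hom(X,T)$. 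In particular $T$ has an alcove, and since $T$ is isomorphic to some $\Tr_{\mu w}$ (so the relevant morphism counts are the same), Lemma~\ref{lem:exclusive}(\ref{item:triples})---whose hypothesis, Axiom~\ref{ax:induced}, is assumed here---gives that every ordered triple of $T$ has an alcove, as required.

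I expect no real obstacle: the argument is essentially a two-link chain (Axiom~\ref{ax:buckle}, then Lemma~\ref{lem:exclusive}(\ref{item:triples})), and it suffices to note that the existence of \emph{one} closed wedge already forces $\Hom(X,T)\neq\emptyset$. The one spot deserving a moment's care is the opening move, namely verifying that ``two wedges with different centers'' really does supply Axiom~\ref{ax:buckle} with ``two ordered triples with different centers''; this is immediate from the definition of the center of a wedge, and it does not use Axiom~\ref{ax:equal}, so there may be several wedges at each of those triples without affecting the conclusion.
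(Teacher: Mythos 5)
Your argument is correct and follows the same route as the paper's own proof: Axiom~\ref{ax:buckle} closes the wedges, a closed wedge yields an alcove, and Lemma~\ref{lem:exclusive}\ref{item:triples} propagates that alcove to every ordered triple. The extra care you take in matching "wedges with different centers" to the hypothesis of Axiom~\ref{ax:buckle} is fine but adds nothing beyond what the paper leaves implicit.
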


\begin{proof}
By Axiom~\ref{ax:buckle}, such a triad has a closed wedge, hence an alcove.
By Lemma~\ref{lem:exclusive}\ref{item:triples}, it then has an alcove at every ordered triple.
\end{proof}

\begin{lemma}\label{lem:wedgecounts}
Assume Axioms~\ref{ax:induced}, \ref{ax:equal}, and \ref{ax:buckle}.
Then each triad has exactly one of the following: no wedges, two open wedges, or six alcoves.
\end{lemma}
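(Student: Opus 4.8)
The plan is to classify an arbitrary triad $H$, with actors $p,q,r$, by running over its six ordered triples of actors and assembling the lemmas already in hand. First I would note that, by Axiom~\ref{ax:equal}, each ordered triple of $H$ carries at most one wedge, necessarily open or closed, and that by Lemma~\ref{lem:exclusive}\ref{item:bijection} the wedges at $(i,j,k)$ and at $(k,j,i)$ are in openness-preserving bijection. Hence the six ordered triples fall into three pairs, one per choice of center, and within each pair the status --- no wedge, one open wedge, or one closed wedge --- is constant; a nonempty pair thus contributes exactly two distinct wedges, of the same status. The governing dichotomy is then: either $H$ has no alcove at all, or, by Lemma~\ref{lem:exclusive}\ref{item:triples}, $H$ has an alcove at every one of its six ordered triples.

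In the first case I would argue that $H$ has no closed wedge, since a closed wedge factors through $X$ and so produces an alcove; by the contrapositive of Axiom~\ref{ax:buckle}, wedges then cannot occur at two ordered triples with different centers, so all wedges of $H$ share a single center, and by the pairing above $H$ has either $0$ or $2$ wedges, all open. In the second case, precomposing the alcove at $(i,j,k)$ with $\iota$ yields a closed wedge centered at $j$ through $i$ and $k$; by Axiom~\ref{ax:equal} this is \emph{the} wedge at $(i,j,k)$, so each of the six ordered triples carries a unique closed wedge and $H$ has no open wedge. This already gives the wedge trichotomy: $0$ wedges, $2$ open wedges, or $6$ closed wedges.

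What remains --- and what I expect to be the main obstacle --- is upgrading ``$6$ closed wedges'' to ``$6$ alcoves'', i.e.\ showing that precomposition with $\iota$ is a bijection from the alcoves of $H$ onto its closed wedges. Surjectivity is the definition of ``closed''; for injectivity it suffices that two alcoves landing at the same ordered triple $(i,j,k)$ be congruent. The automorphism group of $X=C_6$ fixes each of $v_1,v_3,v_5$ whenever it fixes $v_0,v_2,v_4$ (those events are determined by their neighbours), so two such alcoves can differ only in the images of $v_1,v_3,v_5$; and --- at least for alcoves that are induced injections, which is the essential case --- those images are events of $H$ with matching attendance, hence structurally equivalent. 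Realizing each such discrepancy as post-composition with an automorphism of $H$ that fixes all actors (a morphism, by Axiom~\ref{ax:induced}), which acts trivially modulo $\sim$ on every wedge into $H$ by Axiom~\ref{ax:equal}, it then suffices that the three sub-$P_4$ inclusions $W\hookrightarrow X$ be jointly epic modulo $\sim$ --- that is, that a morphism out of $X$ be determined modulo $\sim$ by its three wedge-restrictions, each of which Axiom~\ref{ax:equal} pins down. Establishing this joint epi-ness --- propagating the coarseness of $\sim$ that Axiom~\ref{ax:equal} forces on wedges up to alcoves --- is the delicate step; everything else is bookkeeping with the pairing and the two parts of Lemma~\ref{lem:exclusive}.
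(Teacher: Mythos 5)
Your first two paragraphs are, in substance, the paper's own proof: Axiom~\ref{ax:equal} gives at most one wedge per ordered triple, Lemma~\ref{lem:exclusive}\ref{item:bijection} pairs the six triples by center with openness preserved, and the different-centers case is exactly Axiom~\ref{ax:buckle} combined with Lemma~\ref{lem:exclusive}\ref{item:triples}, which the paper packages as Lemma~\ref{lem:disambiguation}. Your alcove/no-alcove dichotomy is, if anything, slightly more explicit than the paper's wording, since it also rules out the configuration ``two closed wedges at one center and nothing elsewhere,'' which the paper's proof leaves implicit.

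Your third paragraph, however, chases a step that is not part of the paper's argument and should not be treated as a gap. The paper's proof stops at the trichotomy ``no wedges / two open wedges / six closed wedges,'' and that is all Thm.~\ref{thm:binning} uses; ``six alcoves'' in the statement is to be read as an alcove at each of the six ordered triples (equivalently, a closed wedge at each), not as the cardinality assertion \(|\Hom_{\C/\sim}(X,H)|=6\). Indeed, injectivity of the pullback \(\iota^\ast\) is genuinely not a consequence of Axioms~\ref{ax:induced}, \ref{ax:equal}, and \ref{ax:buckle} alone: for example, the congruence on \(\T\) that is \(\approx\) on morphisms out of triads having a non-adjacent pair of actors and equality on morphisms out of fully connected triads satisfies all three axioms, yet gives \(\Tr_{(0,0,0),2}\) six closed wedges and \(48\) alcoves. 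This is precisely why the paper defers the wedge--alcove correspondence to Thm.~\ref{thm:existsunique}, where Axiom~\ref{ax:incongruence} enters through Lemmas~\ref{lem:induced}, \ref{lem:congruent}, and \ref{lem:pullback}; your sketch correspondingly leans on alcoves being induced injections with structurally equivalent event images, which is Lemma~\ref{lem:induced} and is unavailable without Axiom~\ref{ax:incongruence}. So, with the third paragraph deleted (or replaced by a remark on how ``six alcoves'' is to be read), your argument coincides with the paper's.
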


\begin{proof}
Each triad contains six ordered triples, which by Axiom~\ref{ax:equal} have at most one wedge each.
Lemma~\ref{lem:exclusive}\ref{item:bijection} requires that the wedges centered at any one actor either do not exist, are both open, or are both closed.
Lemma~\ref{lem:disambiguation} implies that, if two ordered triples with different centers have wedges, then all six have closed wedges. Thus the possible distributions of wedges among the six ordered triples are none, a pair of open wedges (at the same center), and six closed wedges.
\end{proof}

\begin{proof}[Proof of Thm.~\ref{thm:binning}]
Thm.~\ref{thm:census} provides Eq.~\ref{eq:census}, which respects triad classes.
Lemma~\ref{lem:wedgecounts} implies that either \(S_{\mu w}=F_{\mu w}=0\), \(S_{\mu w}=0\) and \(F_{\mu w}=1\), or \(S_{\mu w}=3\) and \(F_{\mu w}=0\) for every triad class.
Binning these classes into \(S_\emptyset\), \(S_W\), and \(S_X\), respectively, achieves the result.
\end{proof}

\paragraph{Characterization}

The characterization theorem takes place over three steps: First, the three assumed axioms only allow wedges and alcoves with no inclusive events (\(\overline\T\)). (This makes Axiom~\ref{ax:buckle} unnecessary.) Second, the equal representation of Axiom~\ref{ax:equal} requires that any wedges at the same ordered triple of actors are congruent (\(\approx\)), but when inclusive events are ignored the weaker relation \(\simeq\) is enough. This limits the options to the two formulations in Def.~\ref{def:Chat} under the category \(\T/\simeq\). Third, these formulations agree under certain conditions, which turn out to be satisfied under \(\T/\simeq\).

\begin{lemma}\label{lem:induced}
Assume Axioms~\ref{ax:induced}, \ref{ax:incongruence}, and \ref{ax:equal}.
Then any wedge or alcove is an induced injection.
\end{lemma}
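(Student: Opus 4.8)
The plan is to argue by contradiction: granting Axioms~\ref{ax:induced}, \ref{ax:incongruence}, and \ref{ax:equal}, I will show that $\C$ can contain no morphism $W\to\Tr_{\mu w}$ or $X\to\Tr_{\mu w}$ that is not an induced injection. Since every wedge or alcove of every AN $G$ is represented by such a morphism---wedges and alcoves map into triads of $G$, and a triad $\Tr_{\mu w}$, viewed as an AN, is the subgraph it schedules from its own three actors---this yields the lemma. It helps to record first the elementary dichotomy underlying the argument: a morphism $\chi$ from $W$ (or $X$) into a triad $\Tr_{\mu w}$ sends actor nodes to distinct actors and each event node to an event attended by exactly two or exactly three of those actors; $\chi$ is an induced injection exactly when every event node has a two-actor (``exclusive'') image, and it fails to be an induced injection exactly when some event node has a three-actor (``inclusive'') image---for then $\chi$ is either non-injective on events or carries an edge, such as $(\chi(v_4),\chi(v_1))$, absent from the domain.

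I would then reduce alcoves to wedges. If $\phi\colon X\to H$ is an alcove in $\C$ that is not an induced injection, some event node $v_\ell$ of $X=C_6$ has inclusive image; precomposing $\phi$ with a suitable rotation automorphism of $C_6$ (an induced injection, hence in $\C$ by Axiom~\ref{ax:induced}) moves this to $v_1$, and precomposing further with the canonical inclusion $\iota\colon W\hookrightarrow X$ (also an induced injection) yields a wedge $\phi\circ\iota\in\C$, $W\to H$, still with inclusive image at $v_1$, hence still not an induced injection. So it suffices to rule out non-induced-injection wedges in $\C$.

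Suppose, then, that $\phi\colon W\to H$ is such a wedge, with $H=\Tr_{\mu w}$. Precomposing with the flip automorphism of $W=P_4$ if necessary (also in $\C$ by Axiom~\ref{ax:induced}), assume $\phi(v_1)=a$ is inclusive, and put $(i,j,k)=(\phi(v_0),\phi(v_2),\phi(v_4))$; note $w\ge1$. The decisive step is to transport $\phi$ into a triad that has an exclusive event between every pair of actors: set $H'=\Tr_{\mu+(1,1,1),\,w}$. Then $H$ sits inside $H'$ as an induced subgraph (keeping all of $H$'s events), so this inclusion $g\colon H\to H'$ is an induced injection and lies in $\C$; hence $g\circ\phi\in\C$ is a wedge $W\to H'$ at $(i,j,k)$ with $(g\circ\phi)(v_1)=a$ still inclusive. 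Because $H'$ also has an exclusive $\{i,j\}$-event and an exclusive $\{j,k\}$-event, there is an induced injection $\psi\colon W\to H'$ at $(i,j,k)$ sending $v_1,v_3$ to such events, and $\psi\in\overline\T\subseteq\C$. Since $a$ is attended by $k$ while $\psi(v_1)$ is not, the events $a$ and $\psi(v_1)$ have different neighbourhoods in $H'$, so are not structurally equivalent; therefore $g\circ\phi\not\simeq\psi$, and by Axiom~\ref{ax:incongruence} $g\circ\phi\not\sim\psi$. Thus $[g\circ\phi]\ne[\psi]$ are two distinct wedges at the ordered triple $(i,j,k)$ of the AN $H'$, contradicting Axiom~\ref{ax:equal}.

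The hard part is exactly the choice of ambient network. One cannot argue inside the triad $H$ where the offending morphism first appears, because $H$ may have no exclusive event between the relevant pairs (e.g.\ $H=\Tr_{(0,0,0),1}$); then no induced-injection partner exists at $(i,j,k)$ in $H$ and Axiom~\ref{ax:equal} is vacuously fine there. The resolution is to pass to an $H'$ into which $H$ embeds as an \emph{induced} subgraph---so that the transporting map $g$ is forced into $\C$ by Axiom~\ref{ax:induced}---while simultaneously providing the exclusive events the partner $\psi$ needs; enlarging the partition $\mu$ componentwise does both at once. Everything else is routine verification using the symmetries of $W$ and $X$ and the definition of structural equivalence.
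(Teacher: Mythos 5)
Your proof is correct and follows essentially the same route as the paper's: reduce alcoves to wedges via the symmetries of \(X\) and \(\iota\), then manufacture an induced-injection ``partner'' wedge at the same ordered triple by adjoining exclusive events, and let Axiom~\ref{ax:incongruence} force incongruence so that Axiom~\ref{ax:equal} is violated. The only (cosmetic) difference is that you enlarge the containing triad to \(\Tr_{\mu+(1,1,1),w}\), whereas the paper adds two exclusive events directly to the ambient network \(G\); both are the same device of embedding into a larger AN via an induced injection supplied by Axiom~\ref{ax:induced}.
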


\begin{proof}
The only way for a wedge or alcove to not be an induced injection is for it to send some event to an inclusive event.
Suppose the alcove \(\psi:X\to G\) sends \(v_0,v_1,v_2,v_3,v_4,v_5\) to \(i,d,j,e,k,f\), where at least one of the events \(d,e,f\) is inclusive to the triad at \(i,j,k\). (\(d\), \(e\), and \(f\) need not be distinct.) If \(d\) or \(e\) is inclusive, then \(\psi\circ\iota:W\to G\) is a wedge with an inclusive event. If only \(f\) is inclusive, then let \(\rho:X\to X\) be the isomorphism sending \(v_0,v_1,v_2,v_3,v_4,v_5\) to \(v_2,v_3,v_4,v_5,v_0,v_1\), so that the composition \(\psi\circ\rho\circ\iota:W\to G\) sends \(v_0,v_1,v_2,v_3,v_4\) to \(j,e,k,f,i\). By Axiom~\ref{ax:induced}, \(\psi\circ\rho\circ\iota\) is a wedge with an inclusive event. It is enough, therefore, to prove the result for wedges.

\begin{figure}[h]
\centerline{
\hfill
\includegraphics[width=.2\textwidth, trim = 2.5cm 2.75cm 1.25cm 2.25cm, clip = false]{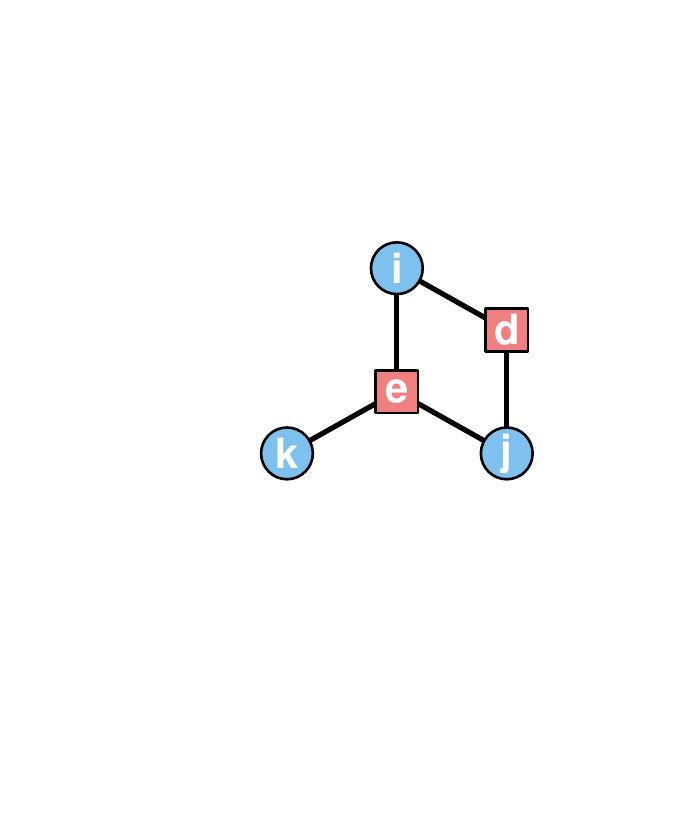}
\hspace{-3ex}a\hspace{1.5ex}\hfill
\includegraphics[width=.2\textwidth, trim = 2.5cm 2.75cm 1.25cm 2.25cm, clip = false]{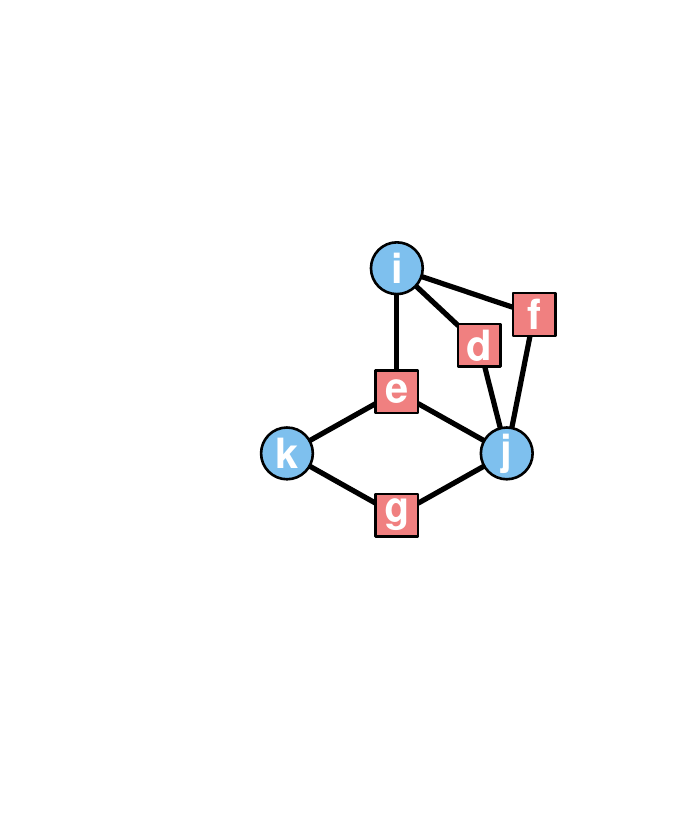}
\hspace{-3ex}b\hspace{1.5ex}\hfill
}
\caption{From the proof of Lemma~\ref{lem:induced}: (a) the image of \(\psi:W\to G\) and (b) the necessary subgraph of \(G\) containing (a).}
\label{fig:induced}
\end{figure}

So suppose the wedge \(\phi:W\to G\) sends \(v_0,v_1,v_2,v_3,v_4\) to \(i,d,j,e,k\), where at least one of \(d\) and \(e\) is inclusive to the triad at \(i,j,k\). Obtain \(G'\) from \(G\) by adding events \(f\), attended only by \(i\) and \(j\), and \(g\), attended only by \(j\) and \(k\). (See Fig.~\ref{fig:induced}.) The subgraph inclusion \(\sigma:G\to G'\) is an induced injection, hence by Axiom~\ref{ax:induced} a morphism. Then the composition \(\sigma\circ\phi:W\to G'\) is a wedge. The graph map \(\phi':W\to G'\) sending \(v_0,v_1,v_2,v_3,v_4\) to \(i,f,j,g,k\) is an induced injection since \(f\) and \(g\) are exclusive events, so by Axiom~\ref{ax:induced} \(\phi'\) is also a wedge---at the same ordered triple as \(\sigma\circ\phi\). Axiom~\ref{ax:incongruence} implies that these wedges are incongruent, which contradicts Axiom~\ref{ax:equal}. Thus \(\phi\) cannot exist.
\end{proof}

\begin{lemma}\label{lem:congruent}
Assume Axiom~\ref{ax:equal}.
Then \(\sim\) is at least as strong as \(\approx\) on the wedges and alcoves.
\end{lemma}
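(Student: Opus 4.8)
The approach is to read the statement off Axiom~\ref{ax:equal} with essentially no computation. Axiom~\ref{ax:equal} asserts that \(\C/\sim\) carries at most one wedge at each ordered triple of actors, while the congruence \(\approx\) is by construction the relation identifying two morphisms exactly when they agree on actor nodes---equivalently, when they sit at the same ordered triple. So for wedges the two statements match directly; for alcoves I reduce to the wedge case along the canonical subgraph relation \(\iota\colon W\to X\).

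\emph{Wedge case.} Suppose \(\phi_1,\phi_2\in\Hom_\C(W,G)\) are wedges with \(\phi_1\approx\phi_2\). By definition of \(\approx\) they send the actor nodes \(v_0,v_2,v_4\) of \(W\) to one and the same ordered triple \((i,j,k)\) of actors of \(G\), so \([\phi_1]_\sim\) and \([\phi_2]_\sim\) are both wedges of \(G\) at \((i,j,k)\). Axiom~\ref{ax:equal} says there is exactly one of ``no wedge, one open wedge, one closed wedge'' at \((i,j,k)\); since a wedge exists there, there is exactly one, whence \([\phi_1]_\sim=[\phi_2]_\sim\), i.e.\ \(\phi_1\sim\phi_2\). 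This half merely unwinds the axiom.

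\emph{Alcove case.} Let \(\psi_1,\psi_2\in\Hom_\C(X,G)\) be alcoves with \(\psi_1\approx\psi_2\), hence with common actor images \(i,j,k\). Then \(\psi_1\circ\iota\) and \(\psi_2\circ\iota\) are closed wedges at \((i,j,k)\), so the wedge case gives \(\psi_1\circ\iota\sim\psi_2\circ\iota\): the two alcoves agree on actors and induce \(\sim\)-congruent closed wedges. It then remains to check that an alcove is determined up to \(\sim\) by these data. The only coordinate not yet controlled is the image of the single event node of \(X\) lying outside \(\iota(W)\), an event shared by the actors sent to \(k\) and \(i\); re-threading \(W\) through one of the other copies of \(P_4\) inside \(C_6\) exhibits that image inside a closed wedge at the rotated triple \((j,k,i)\), where the wedge case again forbids any change of \(\sim\)-class, and chaining the congruences yields \(\psi_1\sim\psi_2\). (That these other copies of \(P_4\) are themselves morphisms of \(\C\) follows from Axiom~\ref{ax:induced}, which is in force wherever this lemma is applied.)

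The only real obstacle is this alcove bookkeeping: Axiom~\ref{ax:equal} constrains wedges alone, so one must recover the ``extra'' event of \(X\) without circularity. The re-threading above is the cleanest route; alternatively, one argues directly that two alcoves agreeing on actors but differing on that event would yield two non-congruent wedges at some ordered triple, again contradicting Axiom~\ref{ax:equal}. The wedge case needs nothing beyond the definitions.
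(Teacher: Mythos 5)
Your wedge case is, in substance, the paper's entire proof: the paper disposes of both cases in one line, observing that two incongruent wedges \emph{or alcoves} at the same ordered triple of actors would contradict the ``exactly one'' clause of Axiom~\ref{ax:equal}; it does not attempt your reduction of alcoves to wedges. So the part of your argument that is solid coincides with the paper, while the extra machinery you add for alcoves is both a departure from the paper and the place where your proof breaks.

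The gap is the final step of the alcove case. From \(\psi_1\circ\iota\sim\psi_2\circ\iota\) and \(\psi_1\circ\iota'\sim\psi_2\circ\iota'\) (restriction along a rotated copy of \(W\) in \(X\)) you assert that ``chaining the congruences yields \(\psi_1\sim\psi_2\).'' Nothing licenses this: a congruence relation is only required to be compatible with composition in the forward direction---congruent morphisms have congruent composites---and neither Axiom~\ref{ax:equal} nor Axiom~\ref{ax:induced} supplies a converse (descent) principle saying that a morphism out of \(X\) is determined up to \(\sim\) by the \(\sim\)-classes of its restrictions to copies of \(W\). In particular, since there are no morphisms \(X\to W\) in \(\T\), the behaviour of \(\sim\) on \(\Hom(W,G)\) does not by itself force anything about \(\Hom(X,G)\): the restrictions \(\psi_1\circ\iota\) and \(\psi_2\circ\iota\) can be congruent simply because \(\sim\) is coarse on wedges, which pins down nothing about the class of \(\psi_1\) versus \(\psi_2\). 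The alternative you sketch at the end has the mirror-image defect: incongruence of the alcoves does not produce incongruent wedge restrictions, so no violation of Axiom~\ref{ax:equal} is generated. A smaller point: the lemma assumes only Axiom~\ref{ax:equal}, whereas your re-threading also invokes Axiom~\ref{ax:induced}; the paper's proof uses nothing beyond Axiom~\ref{ax:equal}, reading it as counting closed configurations at an ordered triple regardless of whether they are presented as wedges or as the alcoves that close them.
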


\begin{proof}
The claim is that any two wedges or alcoves on the same ordered triple of actors are congruent.
If they were not, then Axiom~\ref{ax:equal} would be violated.
\end{proof}

The {\df pullback} \(\iota^\ast:\Hom(X,G)\to\Hom(W,G)\) sends any alcove \(\psi\in\Hom(X,G)\) to the wedge \(\psi\circ\iota:W\to G\).
To understand Lemma~\ref{lem:pullback}, note that the image of \(\iota^\ast\) is in \(\Hom^X(W,G)\)---that is, each such \(\psi\circ\iota\) factors through \(X\) (via the morphism \(\psi\) began with).

\begin{lemma}\label{lem:pullback}
Eqs.~\ref{eq:crate} and \ref{eq:mratio} yield the same statistic if and only if \(\iota^\ast\) is injective.
\end{lemma}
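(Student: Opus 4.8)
The plan is to reduce the claim to the elementary fact that a surjection between finite sets is injective exactly when its domain and codomain have the same cardinality. First I would observe that Eqs.~\ref{eq:crate} and \ref{eq:mratio} share the denominator \(|\Hom(W,G)|\); hence, assuming at least one wedge exists (the only situation in which either expression is defined as a genuine ratio), the two statistics agree at \(G\) if and only if their numerators agree, i.e.\ \(|\Hom(X,G)|=|\Hom^X(W,G)|\). So it suffices to show that this equality of numerators is equivalent to injectivity of \(\iota^\ast\).

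Next I would pin down the target of \(\iota^\ast\). Because \(\sim\) is a congruence relation, composition descends to \(\C/\sim\), so \(\iota^\ast:\psi\mapsto\psi\circ\iota\) is a well-defined map \(\Hom(X,G)\to\Hom(W,G)\) on congruence classes. By the convention that morphisms factor through \(X\) only via \(\iota\), a wedge belongs to \(\Hom^X(W,G)\) precisely when it has the form \(\psi\circ\iota\) for some \(\psi\in\Hom(X,G)\); in other words \(\img(\iota^\ast)=\Hom^X(W,G)\). Thus \(\iota^\ast\), viewed as a map onto \(\Hom^X(W,G)\), is surjective.

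Finally I would close the count. All the sets involved are finite, since \(G\) is a finite graph and every triad is finite; so the surjection \(\iota^\ast:\Hom(X,G)\to\Hom^X(W,G)\) is a bijection if and only if it is injective, which in turn holds if and only if \(|\Hom(X,G)|=|\Hom^X(W,G)|\). Combining this with the first step yields the stated equivalence, uniformly in \(G\): the two formulations define the same statistic exactly when \(\iota^\ast\) is injective.

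I do not expect a real obstacle here; the argument is formal bookkeeping. The one point that genuinely needs care is that everything takes place in the quotient category \(\C/\sim\): I must verify that \(\iota^\ast\) is well defined on congruence classes (which is exactly the compatibility of \(\sim\) with composition) and that "\emph{factors through \(X\)}", read in \(\C/\sim\) under the stated convention on \(\iota\), coincides on the nose with "lies in \(\img(\iota^\ast)\)". It is this identification of \(\Hom^X(W,G)\) with the image of \(\iota^\ast\) that carries the whole lemma.
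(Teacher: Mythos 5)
Your argument is correct and is essentially identical to the paper's proof: both identify \(\img(\iota^\ast)\) with \(\Hom^X(W,G)\), reduce the claim to equality of the (finite) domain and range of the surjection \(\iota^\ast\), and then compare numerators over the common denominator. The only cosmetic difference is that the paper explicitly notes that when \(\Hom(W,G)\) is empty both formulations are undefined and \(\Hom(X,G)\) and \(\Hom^X(W,G)\) are also empty, whereas you simply set that degenerate case aside; either treatment is fine.
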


This lemma is not satisfied, for instance, by the category \(\widetilde\T/=\) underlying \(C\opsahl\): The wedge \(\phi:W\to\Tr_{(2,1,1),0}\) (Fig.~\ref{fig:triad}d) sending \(v_0,v_2,v_4\) to \(v_2,v_4,v_0\) can be closed by either of the events shared by \(v_0\) and \(v_2\). \(C\opsahl\), defined using Eq.~\ref{eq:crate}, counts this as one closed wedge. Its counterpart \(\widehat C\), defined using Eq.~\ref{eq:mratio}, however, counts two alcoves, one for each choice of event---that is, \(\phi\) factors through \(X\) in two ways. (Under this statistic, in fact, \(\widehat C(\Tr_{(2,1,1),0})=\frac{6}{5}\).)

\begin{proof}
Given \(\phi\in\Hom^X(W,G)\), by definition there exists \(\psi\in\Hom(X,G)\) such that \(\phi=\psi\circ\iota\); thus, in any case, \(\iota^\ast\) has image \(\Hom^X(W,G)\).
The second condition therefore amounts to \(\iota^\ast\) being a bijective correspondence between its domain \(\Hom(X,G)\) and its range \(\Hom^X(W,G)\).
Since \(\iota^\ast\) is surjective and its domain and range are finite, this is true if and only if the domain and range have equal size.
Since the denominators of Eqs.~\ref{eq:crate} and \ref{eq:mratio} are equal, this is true if and only if the formulations are equal, unless both are undefined.
This occurs only when \(\Hom(W,G)\) is empty, in which case both \(\Hom(X,G)\) and \(\Hom^X(W,G)\) are also empty.
\end{proof}

\begin{proof}[Proof of Thm.~\ref{thm:existsunique}]
Lemma~\ref{lem:induced} implies that wedges and alcoves are induced injections.
By Axiom~\ref{ax:induced}, all of these are morphisms.
As far as Def.~\ref{def:Chat} is concerned, then, \(\C\) is \(\overline{\T}\).

Lemma~\ref{lem:congruent} implies that the congruence relation \(\sim\) is no weaker than \(\approx\).
Since the events of two wedges or alcoves at the same ordered triple must be exclusive, hence structurally equivalent in the triad, the relations \(\simeq\) and \(\approx\) have the same effect in this case; \(\C/\sim\) is \(\overline\T/\simeq\).
This establishes uniqueness.

For the auxiliary claim, suppose \(\psi,\psi'\in\Hom_{\overline\T/\simeq}(X,G)\) are incongruent.
By the choice of \(\overline\T\), their respective images of \(v_1,v_3,v_5\) must be exclusive.
If \(\psi,\psi'\) agree on all three actors, then, by the choice of \(\simeq\), they are congruent.
So \(\psi,\psi'\) must disagree on some actor; say \(\psi(v_0)\neq\psi'(v_0)\).
This implies that \(\psi\circ\iota(v_0)=\psi(v_0)\neq\psi'(v_0)=\psi'\circ\iota(v_0)\), hence that \(\iota^\ast(\psi)\neq\iota^\ast(\psi')\).
Thus, \(\iota^\ast\) is injective.
By Lemma~\ref{lem:pullback}, both formulations of Def.~\ref{def:Chat} produce the same statistic.

It remains to verify that \(C\excl\) actually satisfies each axiom; this is left to the reader.
\end{proof}

\section{Empirical analyses}\label{sec:empirical}

This section applies triadic tools, including \(C\), \(C\opsahl\), and \(C\excl\), to three empirical networks. Sec.~\ref{sec:instrument} assesses the clustering coefficients as measurement instruments, by comparing their performances on the empirical networks. The assessments consider reliability, validity, redundancy, and practicality, and are illustrated in two case studies. Sec.~\ref{sec:properties} performs triadic analyses of the empirical networks, using the census and the clustering coefficients. The analyses draw upon and extend concepts from previous studies (see Sec.~\ref{sec:background}), including strong triadic closure, brokerage, and influence.

\subsection{Instrumentation}\label{sec:instrument}

\paragraph{Data}

The analyses employ three empirical networks: The social activity attendance network DG1 comes from another table in the same study as above \cite{dgg-deep}, and has seen extensive use as a test case for node classification and community detection techniques \cite{f-finding}. A subset of interlocking directorates data, from a study of corporate philanthropy in Minneapolis--St. Paul \cite{g-social,wf-social}, constitute GWF. Finally, MR refers to the collaboration network constructed from the {\it Mathematical Reviews} bibliographic database, which is maintained by the American Mathematical Society, over the years 1985--2008. These networks are constructed from a range of types and volumes of social interaction data and have appeared in previous studies that provide checks and comparisons for the present work. Two (DG1 and MR) have time-labeled events.\footnote{DG1 is assumed to consist of events spanning nine months \cite{f-finding}; however, whereas the study took place over two years, other orderings are not impossible.}

\begin{table}[h]
  \caption{Structural censuses of DG1, GWF, and two intervals of MR. The column indicates the presence (1) or absence (0) of an inclusive event; the row indicates the number of pairs of actors who attend at least one exclusive event.}
  \label{tab:structural}
  \begin{minipage}{\textwidth}
\begin{tabular}{c|rr|rr|rr|rr|}
  \hline\hline 
 & \multicolumn{2}{c|}{DG1} & \multicolumn{2}{c|}{GWF} & \multicolumn{2}{c|}{MR (1985-7)} & \multicolumn{2}{c|}{MR (2005-7)} \\   
 & 0 & 1 & 0 & 1 & 0 & 1 & 0 & 1 \\ 
  \hline
0 & \(0\) & \(17\) & \(0\) & \(284\) & \(80,747,526,018,836\) & \(17,275\) & \(725,892,036,097,769\) & \(76,558\) \\ 
  1 & \(39\) & \(240\) & \(266\) & \(886\) & \(4,721,138,210\) & \(8,611\) & \(38,496,757,064\) & \(51,599\) \\ 
  2 & \(146\) & \(253\) & \(452\) & \(521\) & \(133,630\) & \(2,014\) & \(909,505\) & \(15,185\) \\ 
  3 & \(45\) & \(76\) & \(130\) & \(61\) & \(886\) & \(129\) & \(5,585\) & \(1,055\) \\ 
   \hline
\hline
\end{tabular}

  \end{minipage}
\end{table}

Table~\ref{tab:structural} presents the structural censuses of the networks. The higher-order structure lost in projection lives mostly in the second column of each census. Several differences between DG1 and GWF, on one hand, and MR, on the other, are apparent: MR is far larger, with triads concentrated among the less-connected; ``symmetric exclusive'' triads (\(t_{30}\), see Ex.~\ref{ex:dg2}) make up a minuscule fraction, undercut only by that of ``symmetric complete'' triads (\(t_{31}\)). In contrast, DG1 and GWF have remarkably similar profiles: the event-free triads number \(t_{00}=0\), and the largest tallies occupy a northeast--southwest diagonal band away from the least and most connected types. This indicates that the smaller networks are more uniformly connected, with fewer poorly-connected actors. This difference likely reflects non-uniformity in the coverage of researchers in MR \cite{lc-community}, e.g.\ as equally prolific researchers on the periphery of mathematics appear less frequently in MR \cite{bfmnrfil-evolutionary}.

The editors assign to each publication one primary and any number of secondary Mathematical Subject Classification (MSC) codes from a hierarchical scheme. At the coarsest level, publications are binned into 64 groups (for instance, algebraic geometry, partial differential equations, and astronomy and astrophysics). For the assessments, 64 subnetworks are constructed by partitioning the literature by primary classification. Of these, 39 satisfy the following inclusion criteria over each adjacent 3-year interval from 1985--7 to 2006--8: the literature is not empty; each of \(C\), \(C\opsahl\), \(C\excl\), and \(D\) is defined; and no two of these statistics are simultaneously zero. Since their curation and construction are systematic, differences in structure among these networks should only reflect differences in the cultures of research publication and limitations of MR coverage. (Nonetheless, size and density are known to influence measures of TC.)

\paragraph{Criteria}

While the statistics surveyed in Sec.~\ref{sec:exclusive} are hopefully intuitive, it is not yet clear that they are useful instruments.\footnote{Strictly speaking, the ``instrument'' that assigns a clustering coefficient to a social network includes the collection of sociometric data and the construction of the bipartite graph as well as the graph-theoretic calculation and the device that performs it; only the calculation is meant here.} This section assesses the local and global definitions of \(C\), \(C\opsahl\), and \(C\excl\) on the basis of stability, concurrent validity, discriminability (meant to reflect practicality), and distinguishability (non-redundancy). The assessments are performed on three samples: the 18 actors of DG1, the 26 actors of GWF, and the 39 disciplines of MR (along adjacent 3-year intervals).
The criteria are conceptualized and assessed as follows:
\begin{itemize}
\item
An instrument is {\df stable} if it yields similar measurements of the same subject at different times. Stability is assessed, on pairs of values at the same MR discipline at adjacent intervals, as the proportion \(\frac{SSM}{SST}\) of the variation in the values accounted for by the pairing in a one-way analysis of variance \cite{ab-measurement}.
\item
Both \(C\opsahl\) and \(C\excl\) are hypothesized to measure properties of graphs that can also be measured in other ways: As mentioned in Sec.~\ref{sec:exclusive}, an alternative correction to \(C\) for event size in ANs is the quotient of \(C\) by its expected value \(C_{\rm rand}\) on an equivalent random bipartite graph.\footnote{Here \(C_{\rm rand}\) is calculated two ways: For the smaller networks DG1 and GWF, take the mean (local) values of \(C\) across 1000 randomly generated bipartite graphs having the same actor and event degree sequences \cite{cdhl-sequential,ah-networksis}. For the MR subnetworks, use the asymptotic approximation \cite{nsw-random}.} Sec.~\ref{sec:exclusive} also suggested that \(C\excl\) may measure dynamic TC, defined as \(D\). The {\df concurrent validity} of each measure shall be assessed as its coefficient of determination \(R^2\) with its alternative \cite{kw-validity}.
\item
Two instruments designed to measure distinct properties shall be called {\df distinguishable} if they yield divergent values on the same subjects. Whereas the coefficient of determination between these values gives their concurrent validity, the remaining proportion of variance, \(1-R^2\), shall assess their distinguishability.
\item
An instrument is {\df discriminable} if its values in practice are dispersed throughout its theoretical range \cite{csc-framework}. (Sec.~\ref{sec:introduction} criticized \(C\) for having low discriminability on ANs.) Discriminability is assessed as the variance \(s^2\) of an instrument's values for a sample of subjects; the standardized values \(4s^2\) are reported, so that discriminability theoretically ranges from \(0\) (all values equal; statistic is useless) to \(1\) (values evenly split between \(0\) and \(1\); statistic perfectly dichotomizes the subjects). A statistic whose values follow a Gaussian distribution centered at \(0.5\) with standard deviation \(0.25\) (and cut off at the \(95\%\) thresholds) has discriminability just under \(\frac{1}{4}\), while one whose values are uniformly distributed has discriminability \(\frac{2}{3}\).
\end{itemize}

On MR, each assessment is performed on the pooled values across all intervals. For instance, each statistic's stability is computed on \(39\times 7=273\) ordered pairs of values.

\paragraph{Results}

\begin{table}[h]
  \caption{Evaluations of three clustering coefficients taken over actors (DG1 and GWF) or subnetworks (adjacent 3-year intervals of MR).}
  \label{tab:test}
  \begin{minipage}{\textwidth}
    \begin{tabular}{l|rrr|rrr|rrr|}
      \hline
      \hline
      &  \multicolumn{3}{c|}{Classical} & \multicolumn{3}{c|}{Opsahl} & \multicolumn{3}{c|}{Exclusive}  \\
&  DG1 & GWF & MR & DG1 & GWF & MR & DG1 & GWF & MR  \\\hline
Stability & \(\) & \(\) & \(0.781\) & \(\) & \(\) & \(0.403\) & \(\) & \(\) & \(0.457\) \\
Validity & \(\) & \(\) & \(\) & \(0.622\) & \(0.296\) & \(0.113\) & \(0.058\) & \(\) & \(0.399\) \\
Dist. (Classical) & \(\) & \(\) & \(\) & \(0.950\) & \(0.940\) & \(0.999\) & \(0.492\) & \(0.732\) & \(0.924\) \\
Dist. (Opsahl) & \(\) & \(\) & \(\) & \(\) & \(\) & \(\) & \(0.915\) & \(0.592\) & \(0.948\) \\
Discriminability & \(0.005\) & \(0.013\) & \(0.047\) & \(0.051\) & \(0.050\) & \(0.026\) & \(0.205\) & \(0.224\) & \(0.001\) \\

      \hline
      \hline
    \end{tabular}
  \end{minipage}
\end{table}

The test results constitute Table~\ref{tab:test}. (Non-meaningful or redundant cells are left empty. Plots for each assessment are included in the supplement.) \(C\) is by far the most stable of the statistics (\(\frac{SSM}{SST}=0.78\)), with less than half of the variation in \(C\opsahl\) and \(C\excl\) each interval accounted for by the previous.
Tests of validity were inconsistent. \(C\opsahl\) was highly correlated with \(C/C_{\rm rand}\) across the women of DG1, but much less so across the CEOs of GWF and the disciplines of MR. Conversely, \(C\excl\) accounted for \(40\%\) of the variance in \(D\) across the disciplines but none across the women. Some heteroskedasticity is also visible in the plots of \(C\excl\). There is strong evidence here that these instruments are closely related, but only in certain limited settings.

The three statistics are highly distinguishable; at worst, \(C\) explains half of the variance in \(C\excl\) across the women of DG1 (\(1-R^2=0.49\)). This, residual plots reveal, is due to a consistent negative relationship.
\(C\) and \(C\opsahl\) are poor discriminants, but on the actors of the smaller networks \(C\excl\) takes values nearly as distributed over \([0,1]\) as the hypothetical cut-off Gaussian. This makes sense in light of the higher {\em average} rates of TC in DG1 and GWF; by comparison, the many highly-connected triads of MR are overwhelmed by the more partially-connected, which \(C\) is better-equipped to discriminate among (and does).  Overall, the assessments lend some legitimacy to the uses of \(C\), \(C\opsahl\), and \(C\excl\) in the next section, but more persuasive assessments of single-value network statistics would be helpful.

\begin{example}\label{ex:DG1}
\begin{table}
  \caption{Measures of local triadic closure and centrality in DG1.}
  \label{tab:dg1local}
  \begin{minipage}{\textwidth}
\begin{tabular}{lrrrrrrr}
  \hline
\hline
 & Classical & Opsahl & Exclusive & Dynamic & TwoWalk & Eigenvector & TwoWalkCorrected \\ 
  \hline
Evelyn & 0.897 & 0.767 & 0.448 & 0.576 & 0.319 & 0.335 & 0.015 \\ 
  Laura & 0.962 & 0.842 & 0.487 & 0.692 & 0.286 & 0.309 & 0.023 \\ 
  Theresa & 0.897 & 0.752 & 0.145 & 0.650 & 0.358 & 0.371 & 0.013 \\ 
  Brenda & 0.962 & 0.839 & 0.450 & 0.692 & 0.292 & 0.313 & 0.021 \\ 
  Charlotte & 1.000 & 1.000 & 1.000 & 1.000 & 0.154 & 0.168 & 0.014 \\ 
  Frances & 0.962 & 0.869 & 0.778 & 0.000 & 0.198 & 0.209 & 0.011 \\ 
  Eleanor & 0.962 & 0.796 & 0.531 & 0.692 & 0.220 & 0.228 & 0.008 \\ 
  Pearl & 0.933 & 0.646 & 0.467 & 0.636 & 0.187 & 0.180 & -0.007 \\ 
  Ruth & 0.897 & 0.670 & 0.328 & 0.650 & 0.242 & 0.236 & -0.006 \\ 
  Verne & 0.897 & 0.674 & 0.393 & 0.576 & 0.231 & 0.218 & -0.013 \\ 
  Myra & 0.933 & 0.714 & 0.556 & 0.273 & 0.204 & 0.187 & -0.017 \\ 
  Katherine & 0.933 & 0.770 & 0.536 & 0.273 & 0.237 & 0.220 & -0.017 \\ 
  Sylvia & 0.897 & 0.746 & 0.300 & 0.576 & 0.292 & 0.277 & -0.015 \\ 
  Nora & 0.897 & 0.838 & 0.663 & 0.725 & 0.281 & 0.264 & -0.017 \\ 
  Helen & 0.897 & 0.816 & 0.661 & 0.611 & 0.215 & 0.201 & -0.014 \\ 
  Dorothy & 0.933 & 0.541 & 0.467 & 0.000 & 0.143 & 0.131 & -0.012 \\ 
  Olivia & 1.000 & 0.581 & 1.000 & 1.000 & 0.088 & 0.070 & -0.019 \\ 
   \hline
\hline
\end{tabular}

  \end{minipage}
\end{table}

Consider the TC of the women who constitute DG1 (Table~\ref{tab:dg1local}, with structural equivalents Olivia and Flora represented by Olivia. Centrality scores will be used in Sec.~\ref{sec:properties}. The supplement contains the table for GWF). Partitioning and core--periphery algorithms tend to identify Pearl, Ruth, and Verne as intergroup bridges or peripheral group members \cite{f-finding}, though in terms of classical TC their neighborhoods are unremarkable. In contrast, these women exhibit the lowest Opsahl TC of the group, and two (Ruth and Verne) are among the three with lowest exclusive TC. These observations attest to the greater discriminability of these statistics.

Pearl, however, has exclusive TC on par with several women in the cores of the two communities (Evelyn, Laura, and Dorothy). Theresa and Sylvia, on the other hand---who are usually placed near the cores of their respective groups within DG1, rather than toward the periphery with Ruth and Verne---show lower exclusive TC. This is due to the high number of events (8 and 7) these women attended. It may be that the study window omitted events attended by their neighbors in their absence, though both women attended events as early as March and as late as September, making this less likely; or it may be that these women played distinctive networking roles in their respective groups, to which traditional algorithms are not sensitive (see Sec.~\ref{sec:properties}).
\end{example}

\begin{example}\label{ex:MR}
A previous study of MR \cite{bfmnrfil-evolutionary} compared two subnetworks, constructed via a nearly even partition of primary MSCs into ``pure'' and ``applied''.\footnote{The partition is coarse and provisional, but reveals a real difference between the research cultures; these subnetworks displayed consistently and characteristically different behavior.} The analysis of TC used \(C\) and \(C/C_{\rm rand}\); the time series are reproduced in Fig.~\ref{fig:win3C} (``Classical'' and ``BipartiteCorrected''). While \(C\) revealed persistent properties of MR, e.g.\ that the applied research community saw more classical TC than the pure, \(C/C_{\rm rand}\) revealed discordant trends in pure and applied research. Both statistics arguably discriminated well, and certainly they were distinguishable from each other.

\begin{figure}
\centerline{\includegraphics[width=\textwidth]{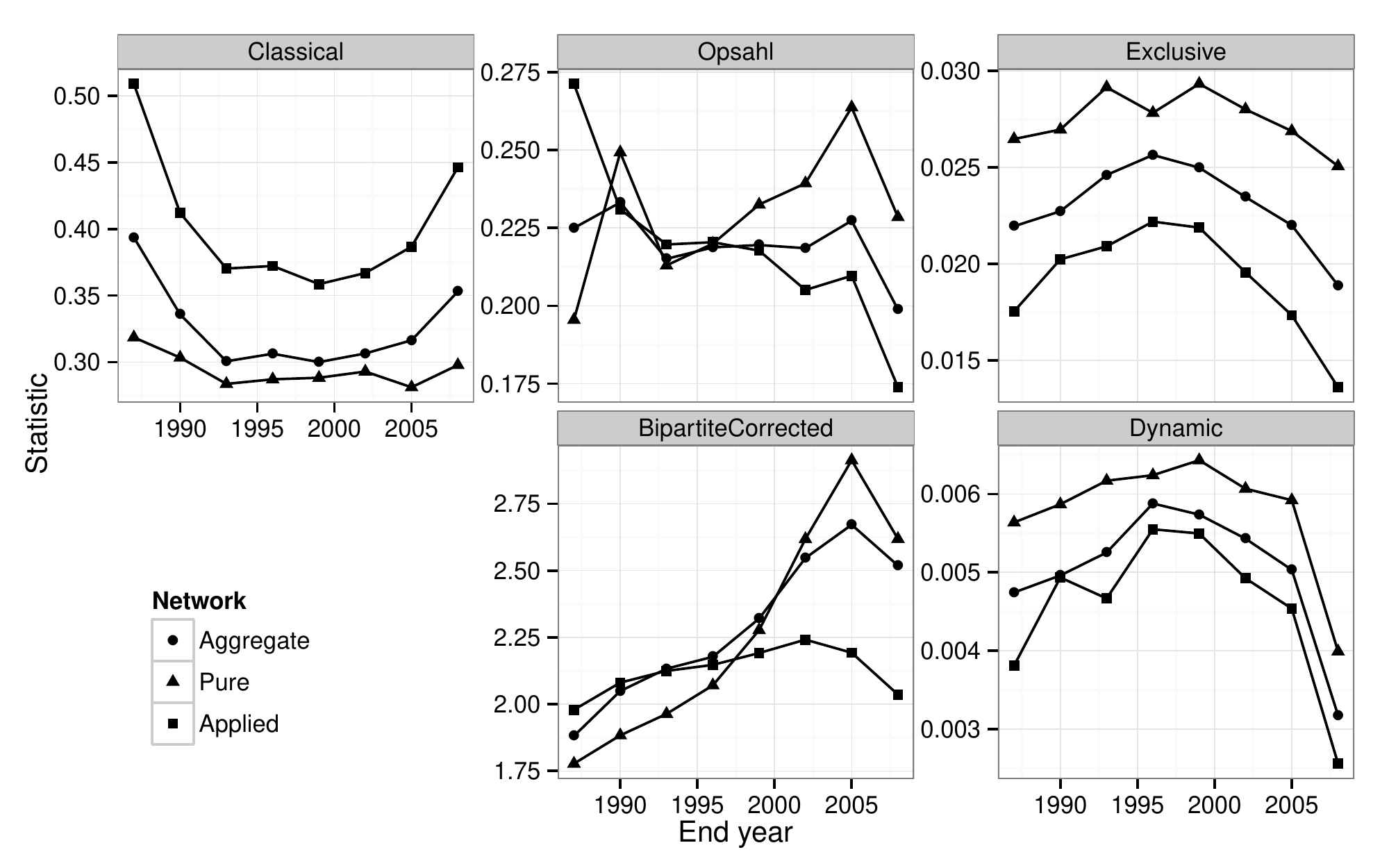}}
\caption{Three global clustering coefficients and alternative measures for two, on the aggregate, pure, and applied MR networks along adjacent 3-year intervals.}
\label{fig:win3C}
\end{figure}

Fig.~\ref{fig:win3C} also includes time series for \(C\opsahl\) and \(C\excl\). The three trajectories of \(C\opsahl\) mimic those of \(C/C_{\rm rand}\) up to a linear transformation; the rates of change are clearly least in the pure network and greatest in the applied. More impressive is the stark resemblance between \(C\excl\) and \(D\), up to scale. \(C\opsahl\) and \(C\excl\) both are less discriminating than \(C\) in absolute terms, though all three are clearly distinguishable. Like \(C\), \(C\excl\) measures a persistent difference between the research cultures: Pure research is better-characterized by exclusive (or dynamic) TC than applied. The negative relationship between \(C\) and \(C\excl\) is evident here: the relative values of \(C\excl\) are inverted from those of \(C\), both in the ordering of the networks and in the concavity of the trends.
\end{example}

\subsection{Triadic closure in affiliation networks}\label{sec:properties}

\paragraph{Strong triadic closure}\label{sec:stc}

In social networks with ties of different strengths, the STC hypothesis predicts that, when two pairs of actors in a triad are {\em strongly} tied, then the third pair will tend to be at least {\em weakly} tied \cite{g-strength}. Investigators have formalized and tested this principle in a variety of ways, often in terms of the frequency, duration, or intimacy of relations, or of the proportion of relations above some threshold of strength \cite{f-sociological}. One conversion approach to STC in ANs is therefore to apply these methods to a weighted projection.

The full triad census offers a direct approach: Within a triad, it makes sense to infer stronger ties between actors from exclusive events than from inclusive events, consistent with the principle that higher-attendance events foster weaker pairwise connections \cite{ge-measuring}.
Accordingly, take the {\df wedge strength} of the ordered triple \((i,j,k)\) to be the number of 4-paths along exclusive events from \(i\) through \(j\) to \(k\), and take \(i\) and \(k\) to be {\df (at least) weakly tied} if there is any 2-path between them.
Thus, the triple \((p,q,r)\) in the triad \(\Tr_{\mu w}\) have wedge strength \(\mu_1\times\mu_2\) and are weakly tied if \(\mu_3+w>0\).
STC shall be measured in an AN as the probability of a weak tie conditional on wedge strength.\footnote{An alternative measure is the expected number of events attended by \(i\) and \(k\), conditioned on the wedge strength of \((i,j,k)\). The results in MR, not reported, are similar to those shown.}

\begin{figure}
\centerline{\includegraphics[width=\textwidth]{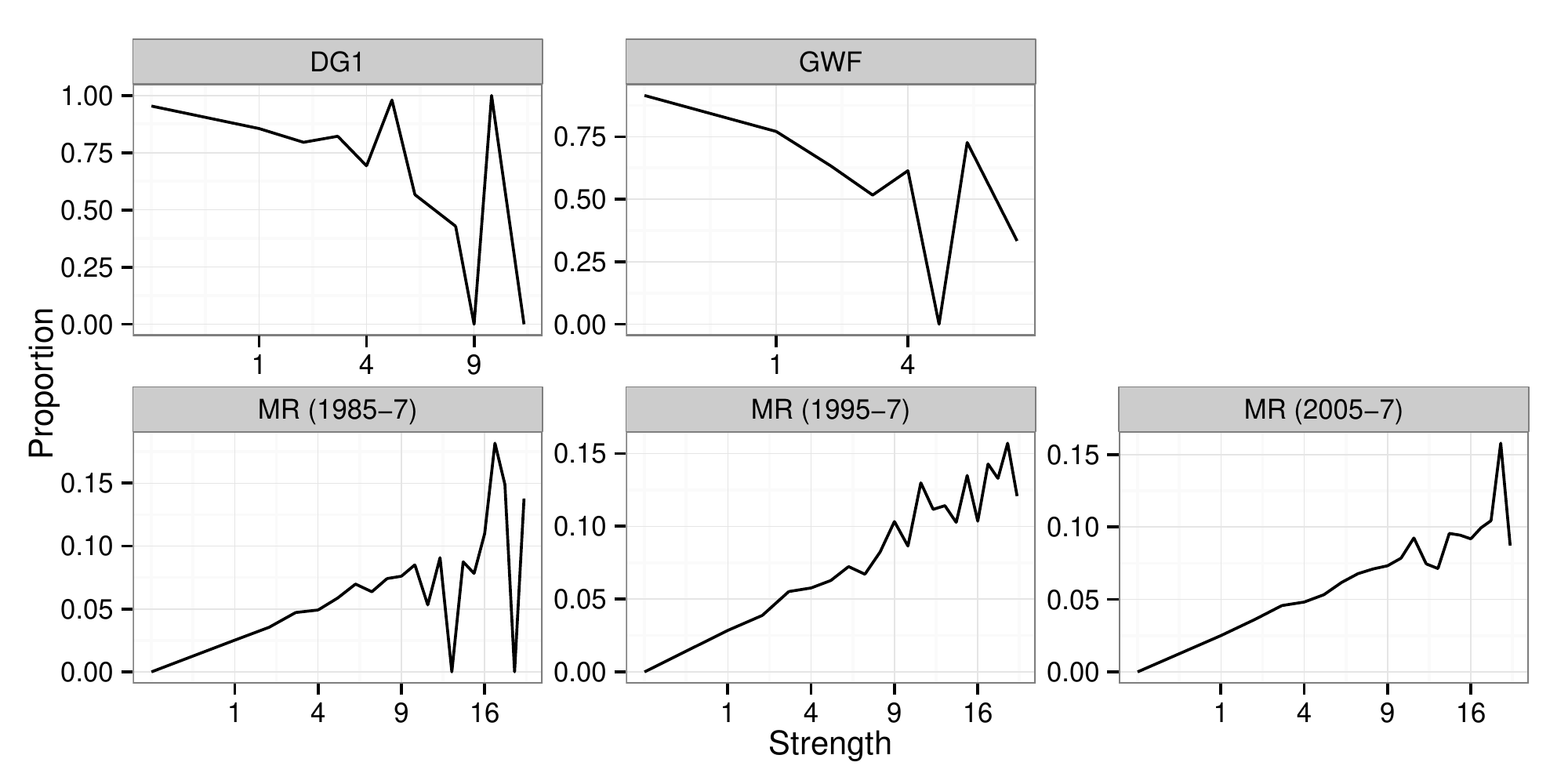}}
\caption[Strong triadic closure]{Conditional probability \(\Pr(\mu_3+w>0\mid \mu_1\times\mu_2=s)\) of a weak tie versus tie strength \(s\), up to \(s={20}\). (Note the square root--scale horizontal axis.)}
\label{fig:stc}
\end{figure}

Fig.~\ref{fig:stc} presents the conditional probabilities for DG1, GWF, and MR over three evenly-spaced 3-year intervals, using a square-root scale on the horizontal axis.
In DG1 and GWF, increasing wedge strength is associated (albeit noisily) with a lower rate of weak tie formation, in defiance of STC.
In contrast, STC in MR is well-modeled by the proportionality
\begin{equation}\label{eq:stc}
\Pr(\mu_3+w>0\mid \mu_1\times\mu_2=s)\ \propto\ s^{\frac{1}{2}}\text.
\end{equation}
Furthermore, though STC makes no predictions about the proportion of ties between actors who have no neighbors in common (the case \(s=0\)), in MR this case is accurately extrapolated from the pattern across wedges of positive strength.

\paragraph{Connectedness and constraint}\label{sec:constraint}

The STC hypothesis is intimately tied to the study of brokerage, in that connections among an actor \(i\)'s neighbors can be thought to constrain \(i\)'s potential to broker between them \cite{b-structural}. Constraint is formulated as a product of \(i\)'s investment in connecting with their neighbors and the connectedness of these neighbors with each other. The local clustering coefficient provides a simple model of constraint: If \(i\) has \(d\) neighbors, each \(j\) of whom is tied to \(d(j)\) of \(i\)'s other neighbors, then the constraint on \(i\) due to \(j\) can be defined as
\begin{equation*}\label{eq:constraint1}
c(i,j)=\frac{1}{d}\times\frac{d(j)}{d-1}=\frac{d(j)}{d(d-1)}\text,
\end{equation*}
with total constraint \(c(i)=\sum_jc(i,j)=C(i)\). The equivalent formulation
\begin{equation}\label{eq:constraint2}
c(i,j)=\frac{|\{\text{wedges at \(i\) w/ \(j\)}\}|}{|\{\text{wedges at \(i\)}\}|}
\times\frac{|\{\text{closed wedges at \(i\) w/ \(j\)}\}|}{|\{\text{wedges at \(i\) w/ \(j\)}\}|}
=\frac{|\{\text{closed wedges at \(i\) w/ \(j\)}\}|}{|\{\text{wedges at \(i\)}\}|}
\end{equation}
generalizes neatly to the terms of Def.~\ref{def:Chat}. Thus the family of local clustering coefficients may be viewed as a family of alternative measures of constraint in ANs.\footnote{This should be compared cautiously to previous approaches that conditioned bipartite clustering coefficients on node degree \cite{lgh-cycles,o-triadic}, rather than on a definition-specific wedge count.}

This presents an opportunity to explore the relationship between connectedness and constraint. As originally defined, constraint decreases with neighborhood size, holding network density constant. A subtle change in definition, from a focus on proportional investment to one on marginal investment, instead produces polynomial {\em growth} in constraint due to a strong interaction effect with local density. In both theoretical \cite{sak-structural} and empirical \cite{rsmob-hierarchical,v-growing} studies, the classical clustering coefficient exhibits the power law relationship
\begin{equation}\label{eq:Ckpowerlaw}
C_\ell\ \propto\ \ell^{-1}\text.
\end{equation}
This may appear to conform to the former definition of constraint, but it actually concerns variation in local density. The family of measures encoded in Def.~\ref{def:Chat} may likewise be expected to behave differently, depending on the variety of TC they measure.

Taking \(\C\) to be \(\T\) and taking the trivial quotient by \(=\) effectively weights the local connectivity of \(i\), as measured by the wedge count at \(i\), by the number of \(i\)'s neighbors {\em and} the multiplicity of \(i\)'s shared events with them, moderated by the extent of overlap of these events among the neighbors. As a measure of constraint, then, \(C\opsahl_\ell\) is highly sensitive to compounding constraint by multiple events, even between the same small subset of \(i\)'s neighbors. In contrast, \(C\excl_\ell\) is sensitive only to pairs of \(i\)'s neighbors with at least one exclusive common event each (due to the restriction to \(\overline{\T}\)) and is equally sensitive to constraints on \(i\)'s strategic position with respect to any such pair (Thm.~\ref{thm:binning}). That is, \(C\opsahl_\ell\) measures constraint weighted according to the strengths of the relationships (multiplicity of events) between \(i\) and two of their neighbors, while \(C\excl_\ell\) measures constraint in the form of channels of exchange, hidden from \(i\), between neighbors having their own exclusive channels with \(i\).

\begin{figure}
\centerline{
\includegraphics[width=\textwidth]{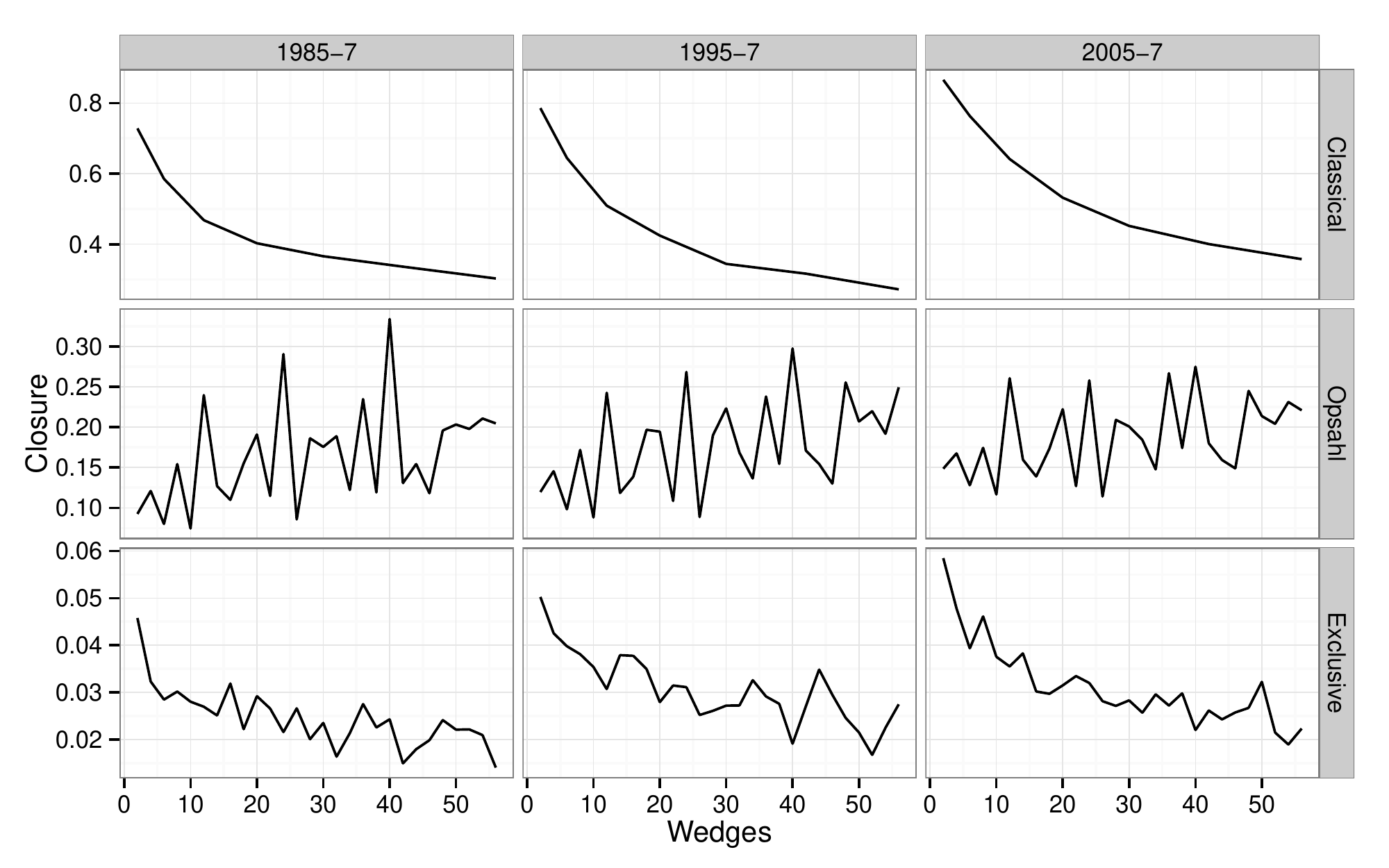}
}
\caption{Three wedge-dependent local clustering coefficients in MR. Note that \(C_\ell\) is only defined when \(\ell=k(k-1)\) for some integer \(k\).}
\label{fig:mr-dep}
\end{figure}

Fig.~\ref{fig:mr-dep} depicts \(C_\ell\), \(C\opsahl_\ell\), and \(C\excl_\ell\) on MR, taken over the same three intervals as in Sec.~\ref{sec:stc}.\footnote{Scatterplots of values in DG1 and GWF are included in the supplement.} \(C_\ell\) follows the expected power law--shaped curve, which persists over time. In contrast, the long-term trend in \(C\opsahl_\ell\) is upward, and exhibits large fluctuations with persistent peaks (e.g.\ at \(\ell=12\) and \(\ell=24\)), an expected artifact of biclique proliferation.\footnote{Whenever \(n\geq 3\) and \(m\geq 2\), the biclique \(K_{n,m}\) yields, for each of its actors \(j\), pairs of neighbors and \(m(m-1)\) ordered pairs of events they share with \(j\), resulting in \((n-1)(n-2)\times m(m-1)\) 4-paths centered at \(j\). When \(m\geq 3\), each of these is closed. Thus, any otherwise untied actor in a copy of \(K_{n,m}\) contributes the atypically high value \(C\opsahl(j)=1\) to the mean \(C\opsahl_\ell\), where \(\ell=(n-1)(n-2)\times m(m-1)\). These values \(\ell=(3-1)(3-2)\times 3(3-1)=12\), \(\ell=(3-1)(3-2)\times 4(4-1)=24\), \(\ell=(4-1)(4-2)\times 3(3-1)=36\), and \(\ell=(3-1)(3-2)\times 5(5-1)=40\) correspond to the highest peaks of \(C\opsahl_\ell\) up to \(\ell=56\). Two clustering coefficients based on \(\widetilde\T/\simeq\) and \(\widetilde\T/\approx\) exhibited similarly expected fluctuations but decreased with wedge count. One based on \(\overline\T/=\) exhibited no such fluctuations and no long-term trend.} \(C\excl_\ell\) mimics \(C_\ell\): The long-term trend is downward and concave, and the fluctuations are modest and transient. Thus, in the world of researhc collaboration, the strengthening of one's (existing) collaborative ties may have a positive effect on the strengths of ties among one's collaborators; while the accumulation of new, mutually-exclusive ties is associated with fewer, on average, ties among them from which oneself is excluded.

Under the assumption that multiple shared events compound and interact to produce many multiple brokerage opportunities, the associated measure of constraint \(C_\ell\opsahl\) may compound enough in kind to outpace it. In these terms, it is not necessarily to \(i\)'s advantage to accumulate neighbors through attendance at common events. In contrast, the constraint \(C_\ell\excl\) imposed by exclusive channels among \(i\)'s neighbors diminishes with increased brokerage opportunities through \(i\)'s own exclusive channels. As in the classical case, therefore, it is unambiguously to \(i\)'s advantage to maintain many neighbors through mutually exclusive channels. These results demonstrate the range of possible behaviors for a custom measure of constraint, and the importance of specifying the brokerage patterns of interest. 


\paragraph{Constraint and influence}\label{sec:cent}

Like early conceptions of constraint, the preceding analysis focused on the structure of an actor \(i\)'s neighborhood. Yet much importance has also been placed on actors' positions within the entire network, as popular conceptions of centrality---closeness, betweenness, and eigenvector---attest. This last analysis attempts to discern whether the observed trade-offs are local or global phenomena, via a different extension of the same classical relationship.

Social influence is often measured by eigenvector centrality, based on the recursive principle that an actor accumulates influence through connections with other influential actors \cite{f-centrality2,bh-analyzing}. The eigenvector centrality of \(i\) can be expressed as the cumulative influence of \(i\) through walks (paths that may repeat nodes and edges) of at most some specified length; 1-walk centrality, for instance, equals node degree. This calculation can be inverted to produce a measure of influence through walks of {\em at least} some length \cite{b-simultaneous}: If the \(\ell\)-walk centrality scores of the nodes of an AN \(G\) constitute the vector \({\bf c}_\ell=(c_\ell(1),\ldots,c_\ell(n))\), and the eigenvector centrality scores comprise \({\bf c}_\infty\),\footnote{Here each \({\bf c}={\bf c}_\ell,{\bf c}_\infty\) is normalized so that \(\sum_i{\bf c}(i)^2=1\).} then the {\df \(\ell\)-walk--corrected centrality scores}, which may be positive or negative, constitute \({\bf c}_\infty-{\bf c}_\ell\). The actors' 2-walk centrality scores provide a measure of the local component of their influence that is self-contained, i.e.\ that does not depend on the measure of constraint. Their 2-walk--corrected centrality scores measure the global component.

Each of \(i\)'s neighbors is accessible to \(i\) via some 2-walk, so that \(c_2(i)\) may depend largely on the number of \(i\)'s neighbors. As the previous analysis revealed, however, how these 2-walks are counted is also important. The 2-walks from \(i\) are most closely related to the wedges of \(C\opsahl\), so it is reasonable to expect only a weak relationship between \(c_2(i)\) and \(C\opsahl(i)\). In contrast, \(C\) and \(C\excl\) are insensitive to redundant 2-walks (from \(i\) to some neighbor \(j\)). In order to decompose the relationship between constraint and influence, each clustering coefficient is considered versus each component (2-walk and 2-walk--corrected) of influence.

\begin{figure}
\centerline{\includegraphics[width=\textwidth]{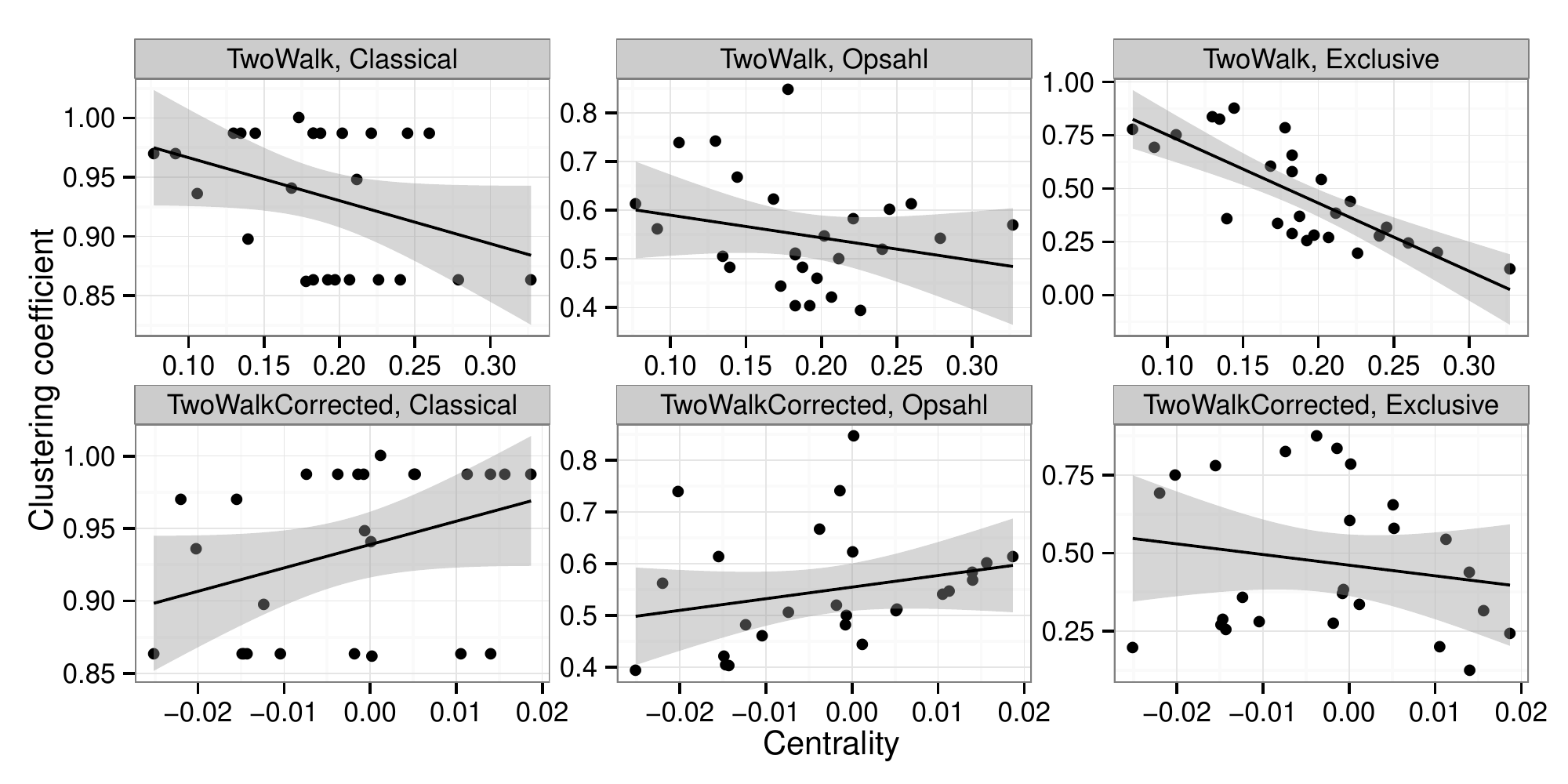}}
\caption{Scatterplots of Opsahl and exclusive clustering coefficients versus 2-walk and 4-walk--corrected eigenvector centrality scores across actors in GWF. Least-squares regression lines and 95\% confidence bands are overlaid.}
\label{fig:cent-gwf}
\end{figure}

Fig.~\ref{fig:cent-gwf} plots the relationships for the CEOs of GWF. (Those for the women of DG1, included in the supplement, are similar.) Those with \(C\opsahl\) are indeed weak, as are those with \(c_\infty-c_2\). The standout is \(C\excl\) versus \(c_2\), and this holds too in DG1: In these small networks, at least, exclusive TC is associated with discernibly lower local influence. Specifically, an increase in 2-walk centrality of \(0.1\) corresponds to a decrease of \(0.46\) (GWF) or \(0.31\) (DG1) in \(C\excl\). The lack of any discernible relationship with 2-walk--corrected centrality suggests that the configuration of \(i\)'s neighborhood is only weakly, if at all, related to \(i\)'s extended influence.

\section{Conclusion}\label{sec:conclusion}

This study pursued a measure of triadic closure for affiliation networks, modeled as bipartite graphs, that controls for the proliferation of bicliques. Bicliques arise from attendance at multiple events by subsets of actors, which is unlikely to reflect the popular understanding of triadic closure. The need for such a measure follows from the sensitivity of existing measures to such structures, even those that control for the sizes of events. In addition to the proposed exclusive clustering coefficient \(C\excl\), the paper presented a classification scheme for affiliation network triads and an axiomatic framework for defining affiliation network clustering coefficients.

An instrumental analysis found \(C\excl\) to measure distinct properties from the classical \(C\) and the recent proposal \(C\opsahl\), and suggested that, in some settings, \(C\excl\) approximates triadic closure as it is characterized over time. An investigation of several empirical affiliation networks revealed patterns of triadic closure much richer than could be inferred from the classical triad census and \(C\) applied to their actor projections. In the author's judgment, \(C\excl\) comes across as a useful counterpoint to \(C\); the two could be viewed as limiting cases between which other clustering coefficients like \(C\opsahl\) interpolate \cite{skokk-generalizations}.

The study has several limitations, most notably the limited number of empirical (and lack of simulated) affiliation networks investigated, and the fact that these networks were constructed using different data collection methods. This leaves the conclusions drawn here open to challenge. Also, no fast algorithms were provided, and the implementations used were not designed for efficiency; applications of the tools described here to large networks will require both.

The tools suggest several other avenues for future work. The classification of affiliation network triads provides the basis for a state transition analysis, which may aide models of network evolution. Affiliation networks also exist with weighted edges, and the generic clustering coefficient described could be adapted, like its predecessor \(C\opsahl\), to this setting.

In summary, it is hoped that the present paper provides a useful framework for the triadic analysis of affiliation networks.

\paragraph{Acknowledgments}

The author thanks Ritchie C. Vaughan for suggesting this line of inquiry; to Miranda Lynch, Tina Eliassi-Rad, Roldan Pozo, Paola Vera-Licona, Linton Freeman, Kathy O'Hara, and Reinhard Laubenbacher for helpful conversations; to Barry Brunson and Pansy Waycaster for several rounds of proofreading; to four anonymous reviewers for highly incisive and supportive comments; and to the Virginia Bioinformatics Institute, the AMS, and UConn Health for data and resources. This project builds upon work done by participants in the Summer 2010 REU in Modeling and Simulation in Systems Biology.

\appendix

\bibliographystyle{nws}
\bibliography{triadic-arxiv}

\begin{thebibliography}{}

\bibitem[\protect\citename{Admiraal \& Handcock, }2008]{ah-networksis}
Admiraal, R., \& Handcock, M.~S. (2008).
\newblock networksis: A package to simulate bipartite graphs with fixed
  marginals through sequential importance sampling.
\newblock {\em Journal of statistical software}, {\bf 24}(8), 1--21.

\bibitem[\protect\citename{Altman \& Bland, }1983]{ab-measurement}
Altman, D.~G., \& Bland, J.~M. (1983).
\newblock {Measurement in Medicine: The Analysis of Method Comparison Studies}.
\newblock {\em The statistician}, {\bf 32}(3), 307--317.

\bibitem[\protect\citename{Bonacich, }1991]{b-simultaneous}
Bonacich, P. (1991).
\newblock Simultaneous group and individual centralities.
\newblock {\em Social networks}, {\bf 13}(2), 155--168.

\bibitem[\protect\citename{Bondy \& Murty, }1976]{bm-graph}
Bondy, J.~A., \& Murty, U. S.~R. (1976).
\newblock {\em Graph theory}.
\newblock New York: Elsevier Science Publishing.

\bibitem[\protect\citename{Borgatti \& Everett, }1997]{be-network1}
Borgatti, S.~P., \& Everett, M.~G. (1997).
\newblock {Network analysis of 2-mode data}.
\newblock {\em Social networks}, {\bf 19}, 243--269.

\bibitem[\protect\citename{Borgatti \& Halgin, }2011]{bh-analyzing}
Borgatti, S.~P., \& Halgin, D.~S. (2011).
\newblock Analyzing affiliation networks.
\newblock {\em Pages  417--433 of:} Scott, John, \& Carrington, Peter~J. (eds),
  {\em The {S}age handbook of social network analysis}.
\newblock London: {S}age Publications Ltd.

\bibitem[\protect\citename{Breiger, }1974]{b-duality}
Breiger, R.~L. (1974).
\newblock {The Duality of Persons and Groups}.
\newblock {\em Social forces}, {\bf 53}(2), 181--190.

\bibitem[\protect\citename{Brunson {\em et~al.}\relax,
  }2014]{bfmnrfil-evolutionary}
Brunson, J.~C., Fassino, S., McInnes, A., Narayan, M., Richardson, B., Franck,
  C., Ion, P., \& Laubenbacher, R.~C. (2014).
\newblock Evolutionary events in a mathematical sciences research collaboration
  network.
\newblock {\em Scientometrics}, {\bf 99}(3), 973--998.

\bibitem[\protect\citename{Burt, }1992]{b-structural}
Burt, R.~S. (1992).
\newblock {\em {Structural holes: The social structure of competition}}.
\newblock Cambridge, MA: Harvard University Press.

\bibitem[\protect\citename{Carrino, }2006]{c-study}
Carrino, C.~N. (2006).
\newblock {\em A study of repeat collaboration in social affiliation networks}.
\newblock Ph.D. thesis, University Park, PA, USA.
\newblock AAI3343661.

\bibitem[\protect\citename{Chen {\em et~al.}\relax, }2005]{cdhl-sequential}
Chen, Y., Diaconis, P., Holmes, S.~P., \& Liu, J.~S. (2005).
\newblock {Sequential Monte Carlo Methods for Statistical Analysis of Tables}.
\newblock {\em Journal of the american statistical association}, {\bf
  100}(469), 109--120.

\bibitem[\protect\citename{Comin {\em et~al.}\relax, }2015]{csc-framework}
Comin, C.~H., Silva, F.~N., \& da~F.~Costa, L. (2015).
\newblock A framework for evaluating complex networks measurements.
\newblock {\em {EPL} ({E}urophysics letters)}, {\bf 110}(6), 68002.

\bibitem[\protect\citename{Csardi \& Nepusz, }2006]{cn-igraph}
Csardi, G., \& Nepusz, T. (2006).
\newblock The igraph software package for complex network research.
\newblock {\em Interjournal}, {\bf Complex Systems}, 1695.

\bibitem[\protect\citename{Davis {\em et~al.}\relax, }1941]{dgg-deep}
Davis, A., Gardner, B.~B., \& Gardner, M.~R. (1941).
\newblock {\em Deep south; a social anthropological study of caste and class}.
\newblock Chicago: The University of Chicago Press.

\bibitem[\protect\citename{Davis, }1967]{d-clustering1}
Davis, J.~A. (1967).
\newblock {Clustering and structural balance in graphs}.
\newblock {\em Human relations}, {\bf 20}(2), 181--187.

\bibitem[\protect\citename{de~Sola~Pool \& Kochen, }1978]{sk-contacts}
de~Sola~Pool, I., \& Kochen, M. (1978).
\newblock Contacts and influence.
\newblock {\em Socnet}, {\bf 1}(1), 5--51.

\bibitem[\protect\citename{Easley \& Kleinberg, }2010]{ek-networks}
Easley, D., \& Kleinberg, J. (2010).
\newblock {\em Networks, crowds, and markets: Reasoning about a highly
  connected world}.
\newblock New York, NY, USA: Cambridge University Press.

\bibitem[\protect\citename{Faust, }1997]{f-centrality2}
Faust, K. (1997).
\newblock {Centrality in affiliation networks}.
\newblock {\em Social networks}, {\bf 19}(2), 157--191.

\bibitem[\protect\citename{Freeman, }1992]{f-sociological}
Freeman, L.~C. (1992).
\newblock The sociological concept of "group": An empirical test of two models.
\newblock {\em The american journal of sociology}, {\bf 98}(1), 152--166.

\bibitem[\protect\citename{Freeman, }2003]{f-finding}
Freeman, L.~C. (2003).
\newblock Finding social groups: A meta-analysis of the southern women data.
\newblock {\em Pages  39--97 of:} Breiger, Ronald, Carley, Kathleen, \&
  Pattison, Philippa (eds), {\em Dynamic social network modeling and analysis:
  Workshop summary and papers}.
\newblock National Academies Press.

\bibitem[\protect\citename{Galaskiewicz, }1985]{g-social}
Galaskiewicz, J. (1985).
\newblock {\em Social organization of an urban grants economy: A study of
  business philanthropy and nonprofit organizations}.
\newblock Orlando, FL: Academic Press.

\bibitem[\protect\citename{Gl{\"a}nzel \& Schubert, }2004]{gs-analyzing}
Gl{\"a}nzel, W., \& Schubert, A. (2004).
\newblock {\em Analyzing scientific networks through co-authorship}.
\newblock {O}pen {A}ccess publications from {K}atholieke {U}niversiteit
  {L}euven. Katholieke Universiteit Leuven.

\bibitem[\protect\citename{Granovetter, }1973]{g-strength}
Granovetter, M.~S. (1973).
\newblock The strength of weak ties.
\newblock {\em The american journal of sociology}, {\bf 78}(6), 1360--1380.

\bibitem[\protect\citename{Gupte \& Eliassi-Rad, }2012]{ge-measuring}
Gupte, M., \& Eliassi-Rad, T. (2012).
\newblock Measuring tie strength in implicit social networks.
\newblock {\em Pages  109--118 of:} {\em Proceedings of the 4th annual {ACM}
  {W}eb {S}cience {C}onference}.
\newblock WebSci '12.
\newblock New York, NY, USA: ACM.

\bibitem[\protect\citename{{Harary} \& {Kommel}, }1979]{hk-matrix}
{Harary}, F., \& {Kommel}, H.~J. (1979).
\newblock {Matrix measures for transitivity and balance}.
\newblock {\em {J. Math. Sociol.}}, {\bf 6}(2), 199--210.

\bibitem[\protect\citename{Hell, }1979]{h-introduction}
Hell, P. (1979).
\newblock An introduction to the category of graphs.
\newblock {\em Pages  120--136 of:} {\em Topics in graph theory ({N}ew {Y}ork,
  1977)}.
\newblock Ann. New York Acad. Sci., vol. 328.
\newblock New York Acad. Sci., New York.

\bibitem[\protect\citename{Holland \& Leinhardt, }1971]{hl-transitivity}
Holland, P.~W., \& Leinhardt, S. (1971).
\newblock Transitivity in structural models of small groups.
\newblock {\em Small group research}, {\bf 2}(2), 107--124.

\bibitem[\protect\citename{Kimberlin \& Winterstein, }2008]{kw-validity}
Kimberlin, C.~L., \& Winterstein, A.~G. (2008).
\newblock {{V}alidity and reliability of measurement instruments used in
  research}.
\newblock {\em Am j health syst pharm}, {\bf 65}(23), 2276--2284.

\bibitem[\protect\citename{Kreher \& Stinson, }1999]{ks-combinatorial}
Kreher, D.~L., \& Stinson, D.~R. (1999).
\newblock Combinatorial algorithms: Generation, enumeration, and search.
\newblock {\em {SIGACT} news}, {\bf 30}(1), 33--35.

\bibitem[\protect\citename{Lee \& Cunningham, }2014]{lc-community}
Lee, C., \& Cunningham, P. (2014).
\newblock Community detection: effective evaluation on large social networks.
\newblock {\em Journal of complex networks}, {\bf 2}(1), 19--37.

\bibitem[\protect\citename{Liebig \& Rao, }2014]{lr-clustering}
Liebig, J., \& Rao, A. (2014).
\newblock Identifying influential nodes in bipartite networks using the
  clustering coefficient.
\newblock {\em Pages  323--330 of:} {\em Proceedings of the tenth international
  conference on signal-image technology and internet-based systems}.

\bibitem[\protect\citename{Lind {\em et~al.}\relax, }2005]{lgh-cycles}
Lind, P.~G., Gonz{\'a}lez, M.~C., \& Herrmann, H.~J. (2005).
\newblock Cycles and clustering in bipartite networks.
\newblock {\em Phys. rev. {E}}, {\bf 72}(Nov), 056127.

\bibitem[\protect\citename{Martin {\em et~al.}\relax, }2013]{mbkn-coauthorship}
Martin, T., Ball, B., Karrer, B., \& Newman, M. E.~J. (2013).
\newblock Coauthorship and citation patterns in the {P}hysical {R}eview.
\newblock {\em Phys. rev. {E}}, {\bf 88}(Jul), 012814.

\bibitem[\protect\citename{Mitchell, }1965]{m-theory}
Mitchell, B. (1965).
\newblock {\em Theory of categories}.
\newblock Pure and Applied Mathematics, vol. 17.
\newblock New York and London: Academic Press.

\bibitem[\protect\citename{Newman, }2001]{n-scientific-i}
Newman, M. E.~J. (2001).
\newblock Scientific collaboration networks. {I}. {N}etwork construction and
  fundamental results.
\newblock {\em Phys. rev. {E}}, {\bf 64}(Jun), 016131.

\bibitem[\protect\citename{Newman, }2003]{n-structure2}
Newman, M. E.~J. (2003).
\newblock The structure and function of complex networks.
\newblock {\em {SIAM} rev.}, {\bf 45}(2), 167--256 (electronic).

\bibitem[\protect\citename{Newman {\em et~al.}\relax, }2001]{nsw-random}
Newman, M. E.~J., Strogatz, S.~H., \& Watts, D.~J. (2001).
\newblock Random graphs with arbitrary degree distributions and their
  applications.
\newblock {\em Phys. rev. {E}}, {\bf 64}(Jul), 026118.

\bibitem[\protect\citename{Opsahl, }2013]{o-triadic}
Opsahl, T. (2013).
\newblock Triadic closure in two-mode networks: Redefining the global and local
  clustering coefficients.
\newblock {\em Social networks}, {\bf 35}(2), 159 -- 167.
\newblock Special Issue on Advances in Two-mode Social Networks.

\bibitem[\protect\citename{{R Development Core Team}, }2008]{r-R}
{R Development Core Team}. (2008).
\newblock {\em R: A language and environment for statistical computing}.
\newblock R Foundation for Statistical Computing, Vienna, Austria.
\newblock {ISBN} 3-900051-07-0.

\bibitem[\protect\citename{Ravasz {\em et~al.}\relax,
  }2002]{rsmob-hierarchical}
Ravasz, E., Somera, A.~L., Mongru, D.~A., Oltvai, Z.~N., \& Barab{\'a}si, A.-L.
  (2002).
\newblock Hierarchical organization of modularity in metabolic networks.
\newblock {\em Science}, {\bf 297}(5586), 1551.

\bibitem[\protect\citename{Saram\"aki {\em et~al.}\relax,
  }2007]{skokk-generalizations}
Saram\"aki, J., Kivel\"a, M., Onnela, J.-P., Kaski, K., \& Kert\'esz, J.
  (2007).
\newblock Generalizations of the clustering coefficient to weighted complex
  networks.
\newblock {\em Phys. rev. {E}}, {\bf 75}(Feb), 027105.

\bibitem[\protect\citename{Stanley, }2002]{s-enumerative}
Stanley, R.~P. (2002).
\newblock {\em Enumerative combinatorics}.
\newblock Cambridge studies in advanced mathematics, no.  v. 1.
\newblock Cambridge University Press.

\bibitem[\protect\citename{Szab{\'o} {\em et~al.}\relax, }2003]{sak-structural}
Szab{\'o}, G., Alava, M., \& Kert{\'e}sz, J. (2003).
\newblock Structural transitions in scale-free networks.
\newblock {\em Phys rev e}, {\bf 67}(5), 056102.

\bibitem[\protect\citename{Uzzi \& Spiro, }2005]{us-collaboration}
Uzzi, B., \& Spiro, J. (2005).
\newblock Collaboration and creativity: The small world problem.
\newblock {\em American journal of sociology}, {\bf 111}(2), 447--504.

\bibitem[\protect\citename{V{\'a}zquez, }2003]{v-growing}
V{\'a}zquez, A. (2003).
\newblock Growing network with local rules: Preferential attachment, clustering
  hierarchy, and degree correlations.
\newblock {\em Phys. rev. {E}}, {\bf 67}(May), 056104.

\bibitem[\protect\citename{Wasserman \& Faust, }1994]{wf-social}
Wasserman, S., \& Faust, K. (1994).
\newblock {\em Social network analysis: Methods and applications}.
\newblock  Vol. 8.
\newblock Cambridge University Press.

\bibitem[\protect\citename{Watts \& Strogatz, }1998]{ws-collective}
Watts, D.~J., \& Strogatz, S.~H. (1998).
\newblock Collective dynamics of ``small-world'' networks.
\newblock {\em Nature}, {\bf 393}(6684), 440--442.

\bibitem[\protect\citename{Wickham, }2009]{w-ggplot2}
Wickham, H. (2009).
\newblock {\em ggplot2: {E}legant graphics for data analysis}.
\newblock Springer New York.

\end{thebibliography}

\newpage

\section{Supplement}

Figs.~\ref{fig:stability}--\ref{fig:discriminability} elaborate upon the scores in Table~3.
Table~\ref{tab:gwflocal} is the counterpart, for GWF, to Table~\ref{tab:dg1local} in the main text.
Fig.~\ref{fig:ex-dep} is the counterpart, for DG1 and GWF, to Fig.~\ref{fig:mr-dep} in the main text, except that the ordered pair for every actor is plotted, rather than their wedge-dependent averages.
Fig.~\ref{fig:cent-dg1} is the counterpart, for DG1, to Fig.~\ref{fig:cent-gwf} in the main text.

\begin{figure}[h]
\centerline{\includegraphics[width=\textwidth]{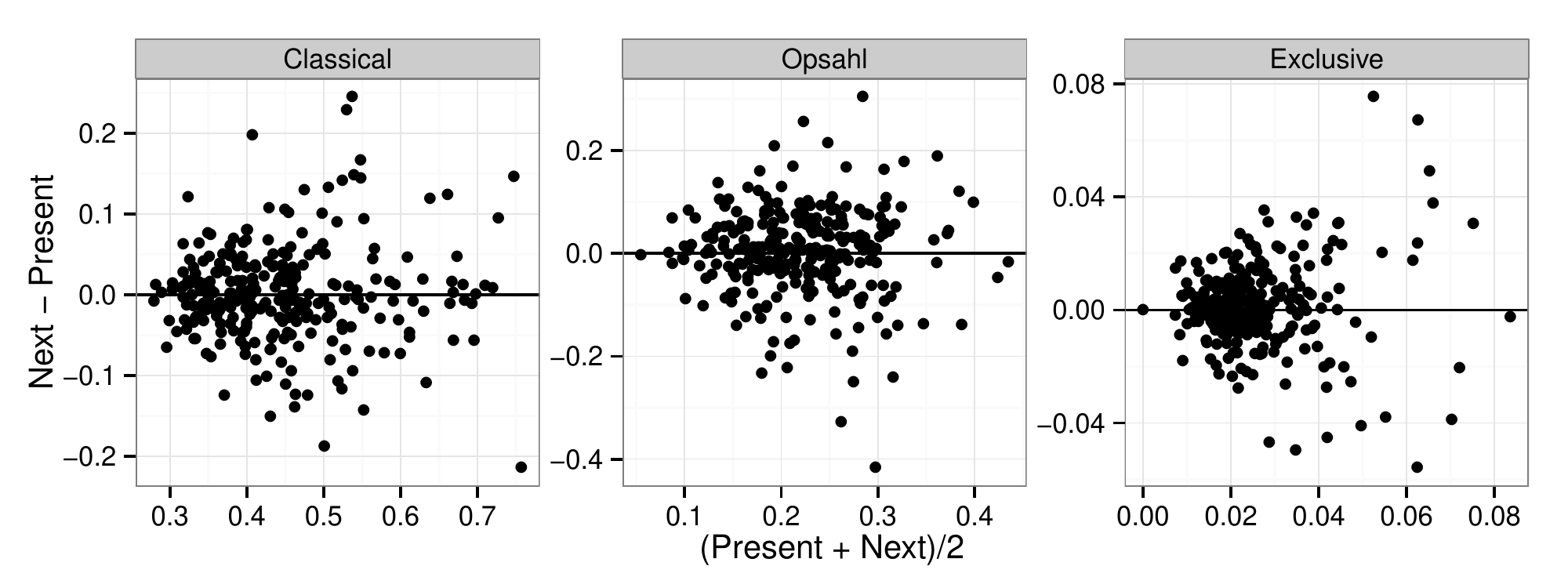}}
\caption{Mean--difference plots for values of \(C\), \(C\opsahl\), and \(C\excl\), taken across 39 subnetworks of MR over 7 pairs of adjacent intervals.}
\label{fig:stability}
\end{figure}

\begin{figure}[h]
\centerline{\includegraphics[width=\textwidth]{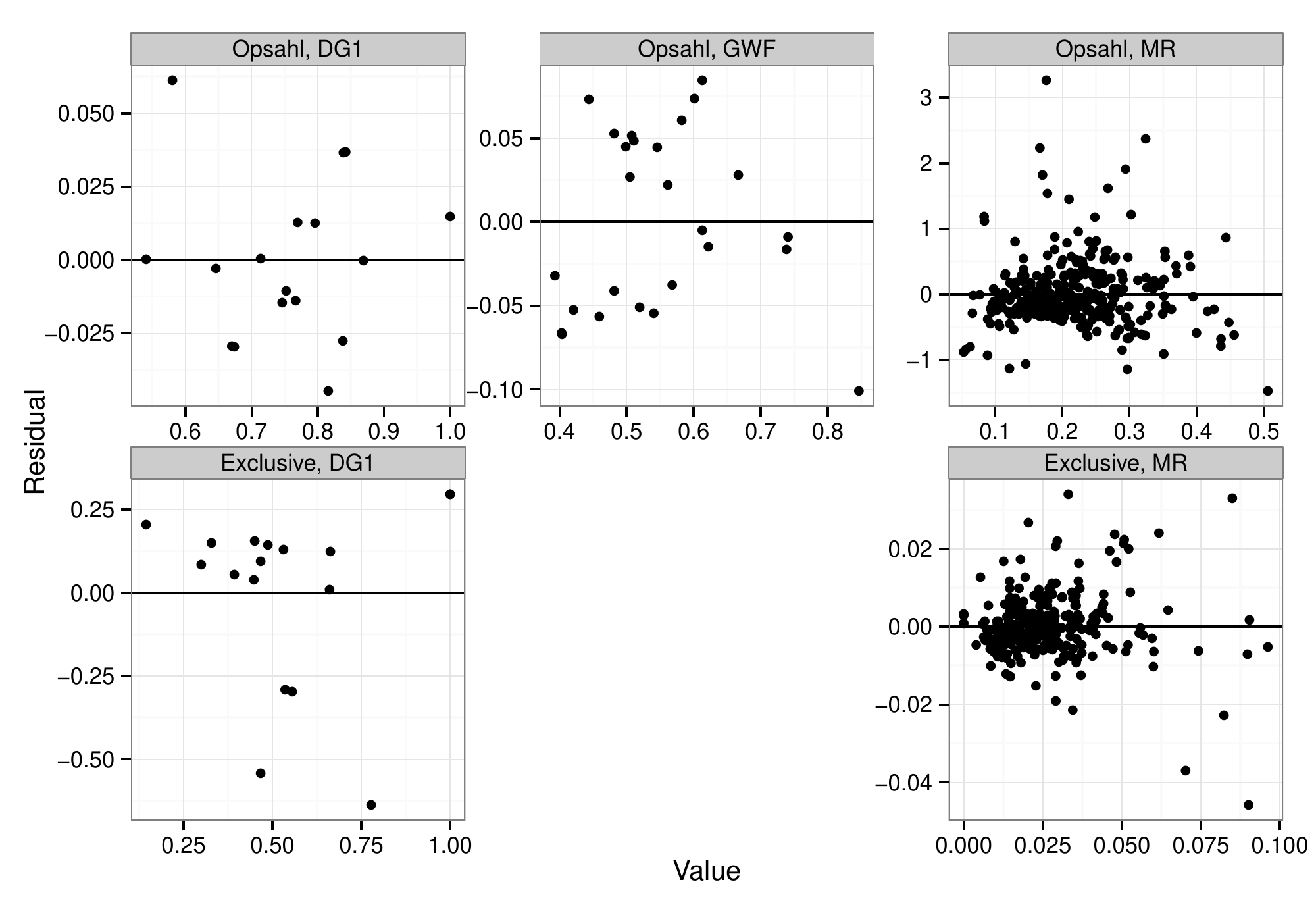}}
\caption{Residual plots for \(C/C_{\rm rand}\) regressed on \(C\opsahl\) and \(D\) regressed on \(C\excl\), taken across the women of DG1, the CEOs of GWF, and 39 subnetworks of MR over 8 intervals.}
\label{fig:validity}
\end{figure}

\begin{figure}[h]
\centerline{\includegraphics[width=\textwidth]{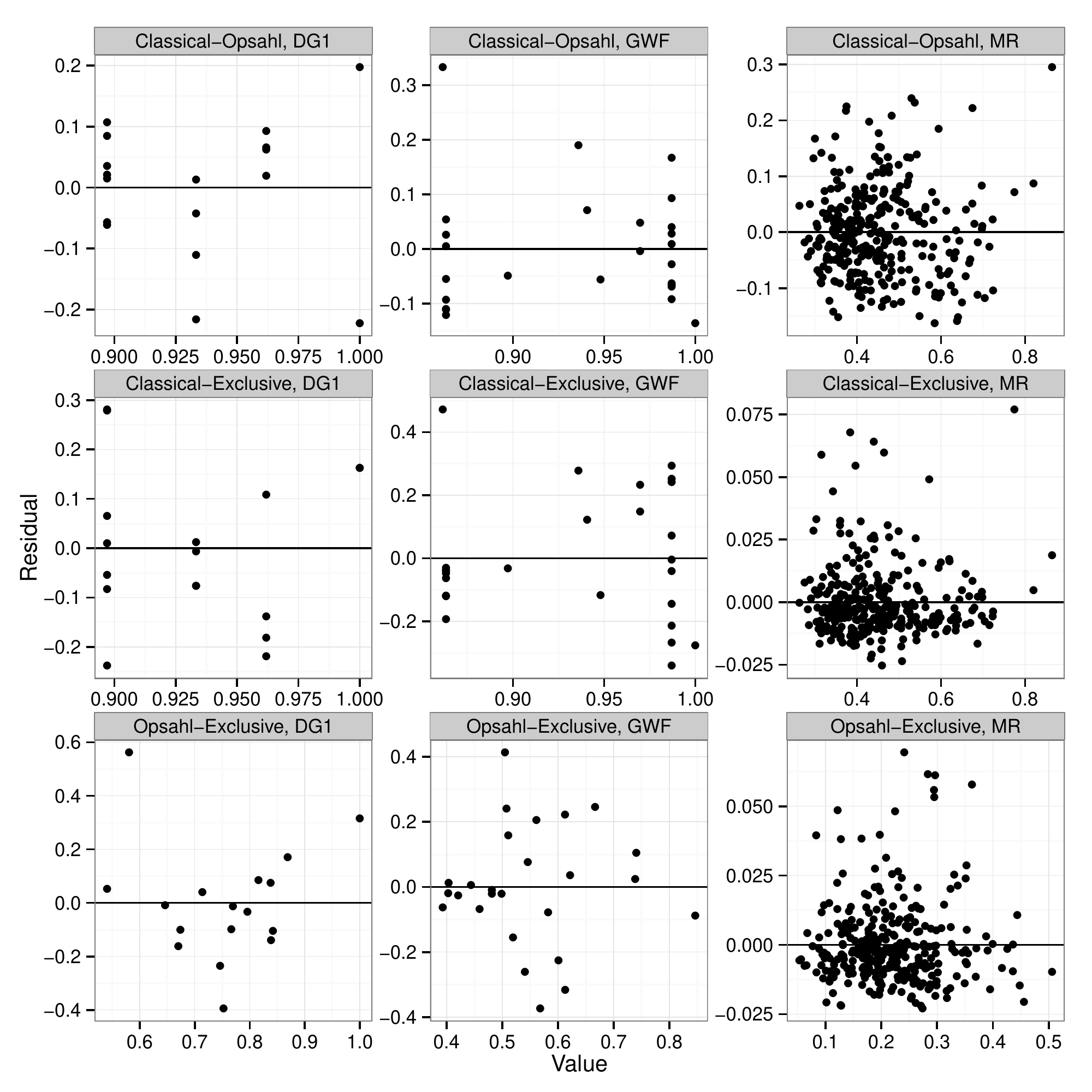}}
\caption{Residual plots for \(C\opsahl\) regressed on \(C\), \(C\excl\) regressed on \(C\), and \(C\excl\) regressed on \(C\opsahl\), taken across the women of DG1, the CEOs of GWF, and 39 subnetworks of MR over 8 intervals.}
\label{fig:distinguishability}
\end{figure}

\begin{figure}[h]
\centerline{\includegraphics[width=\textwidth]{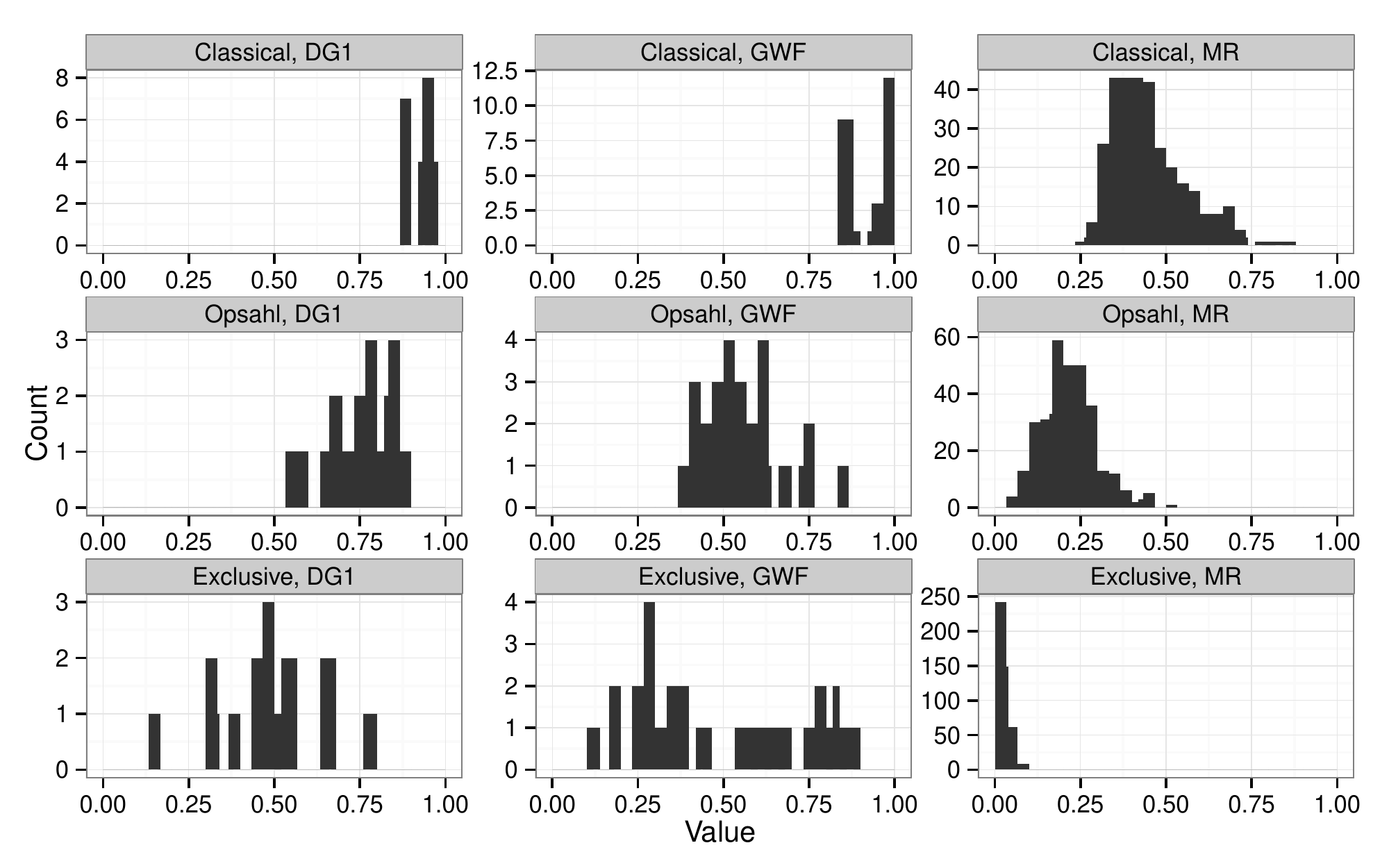}}
\caption{Histograms of values of \(C\), \(C\opsahl\), and \(C\excl\), taken across the women of DG1, the CEOs of GWF, and 39 subnetworks of MR over 8 intervals.}
\label{fig:discriminability}
\end{figure}

\begin{table}
  \caption{Measures of local triadic closure and centrality in GWF.}
  \label{tab:gwflocal}
  \begin{minipage}{\textwidth}
\begin{tabular}{lrrrrrr}
  \hline
\hline
 & Classical & Opsahl & Exclusive & TwoWalk & Eigenvector & TwoWalkCorrected \\ 
  \hline
CEO1 & 0.863 & 0.403 & 0.254 & 0.192 & 0.178 & -0.014 \\ 
  CEO2 & 0.897 & 0.481 & 0.357 & 0.139 & 0.127 & -0.012 \\ 
  CEO3 & 0.987 & 0.741 & 0.833 & 0.130 & 0.128 & -0.001 \\ 
  CEO4 & 0.987 & 0.546 & 0.542 & 0.202 & 0.213 & 0.011 \\ 
  CEO5 & 0.987 & 0.667 & 0.875 & 0.144 & 0.140 & -0.004 \\ 
  CEO6 & 1.000 & 0.444 & 0.333 & 0.173 & 0.174 & 0.001 \\ 
  CEO7 & 0.863 & 0.460 & 0.280 & 0.197 & 0.187 & -0.010 \\ 
  CEO8 & 0.970 & 0.561 & 0.692 & 0.091 & 0.069 & -0.022 \\ 
  CEO9 & 0.936 & 0.739 & 0.750 & 0.106 & 0.086 & -0.020 \\ 
  CEO10 & 0.987 & 0.505 & 0.824 & 0.135 & 0.127 & -0.007 \\ 
  CEO11 & 0.987 & 0.481 & 0.368 & 0.188 & 0.187 & -0.001 \\ 
  CEO12 & 0.970 & 0.613 & 0.778 & 0.077 & 0.061 & -0.016 \\ 
  CEO13 & 0.863 & 0.421 & 0.270 & 0.207 & 0.192 & -0.015 \\ 
  CEO14 & 0.863 & 0.568 & 0.123 & 0.327 & 0.341 & 0.014 \\ 
  CEO15 & 0.987 & 0.601 & 0.315 & 0.245 & 0.261 & 0.016 \\ 
  CEO16 & 0.948 & 0.499 & 0.381 & 0.212 & 0.211 & -0.001 \\ 
  CEO17 & 0.987 & 0.613 & 0.241 & 0.260 & 0.278 & 0.019 \\ 
  CEO18 & 0.861 & 0.847 & 0.784 & 0.178 & 0.178 & 0.000 \\ 
  CEO19 & 0.863 & 0.393 & 0.196 & 0.226 & 0.201 & -0.025 \\ 
  CEO20 & 0.863 & 0.541 & 0.198 & 0.279 & 0.289 & 0.011 \\ 
  CEO21 & 0.863 & 0.404 & 0.286 & 0.183 & 0.168 & -0.015 \\ 
  CEO22 & 0.941 & 0.622 & 0.604 & 0.168 & 0.168 & 0.000 \\ 
  CEO23 & 0.987 & 0.582 & 0.438 & 0.221 & 0.235 & 0.014 \\ 
  CEO24 & 0.863 & 0.519 & 0.275 & 0.240 & 0.239 & -0.002 \\ 
  CEO25 & 0.987 & 0.508 & 0.654 & 0.183 & 0.188 & 0.005 \\ 
  CEO26 & 0.987 & 0.511 & 0.577 & 0.183 & 0.188 & 0.005 \\ 
   \hline
\hline
\end{tabular}

  \end{minipage}
\end{table}

\begin{figure}
\centerline{
\includegraphics[width=\textwidth]{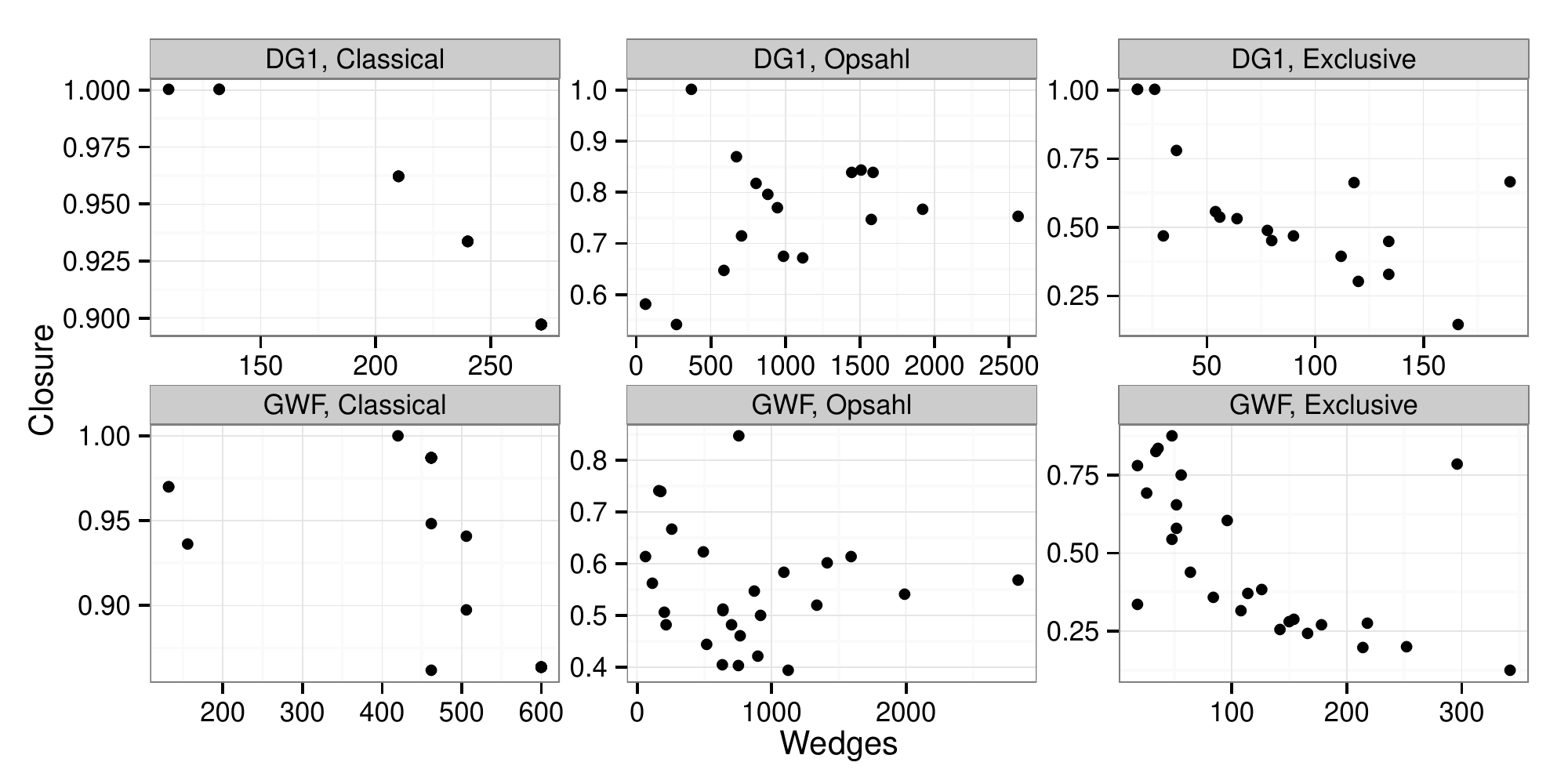}
}
\caption{Four wedge-dependent local clustering coefficients in DG1 and GWF.}
\label{fig:ex-dep}
\end{figure}

\begin{figure}
\centerline{\includegraphics[width=\textwidth]{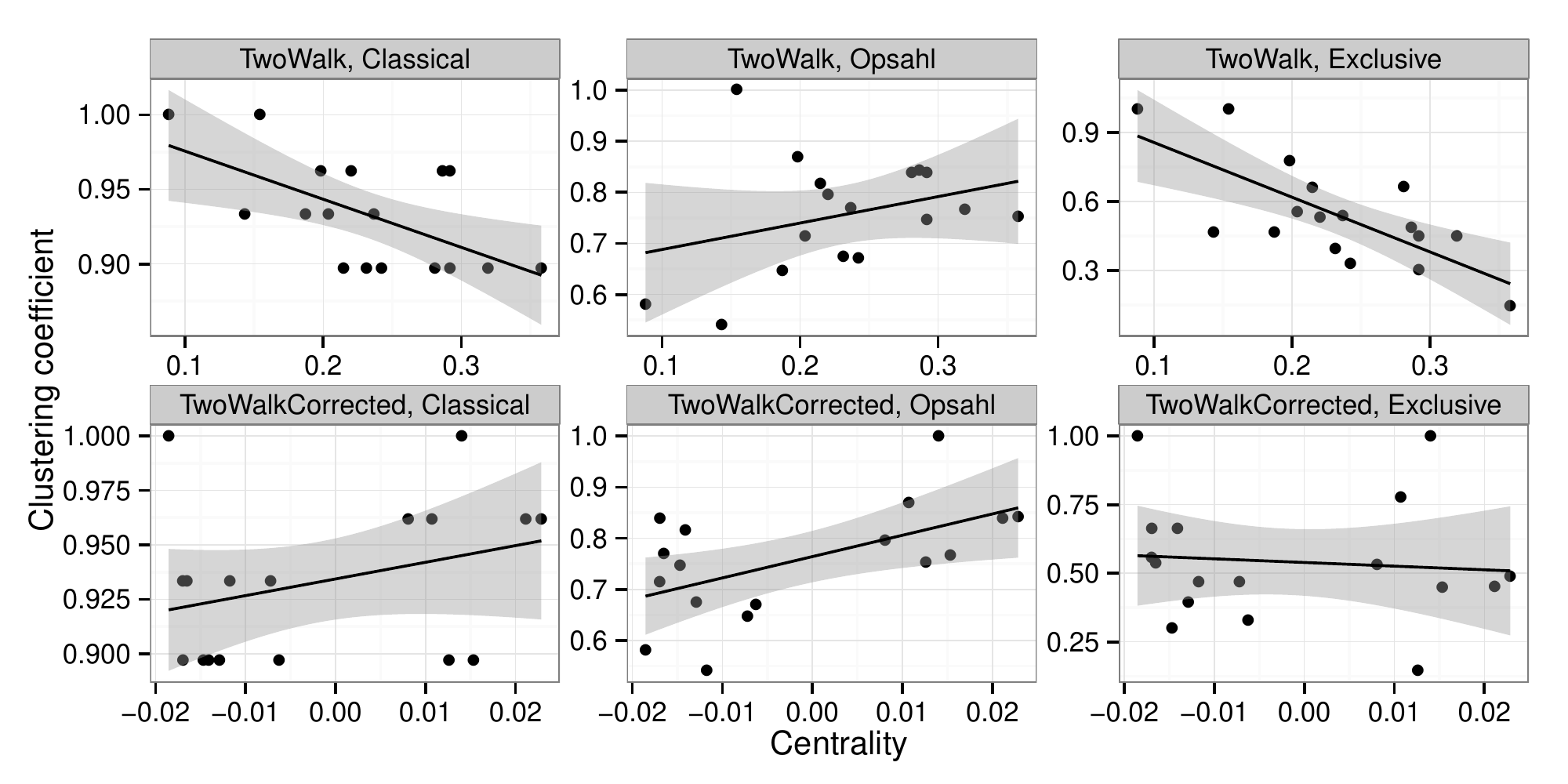}}
\caption{Scatterplots of Opsahl and exclusive clustering coefficients versus 2-walk and 4-walk--corrected eigenvector centrality scores across actors in DG1. Least-squares regression lines and 95\% confidence bands are overlaid.}
\label{fig:cent-dg1}
\end{figure}

\label{lastpage}

\end{document}